\setlist{leftmargin=*}
\newtheorem{thm}{Theorem}[section]
\newtheorem{cor}[thm]{Corollary}
\newtheorem{lem}[thm]{Lemma}
\newtheorem{prop}[thm]{Proposition}
\newtheorem{claim}[thm]{Claim}
\newtheorem{fact}[thm]{Fact}
\theoremstyle{definition}
\newtheorem{defn}[thm]{Definition}
\theoremstyle{remark}
\newtheorem{rem}[thm]{Remark}
\newtheorem{sample}[thm]{Example} \numberwithin{equation}{section}
    {\medskip\begingroup\leftskip 0.5cm\rightskip 0.5cm\noindent\begin{small}{\bf Remark.}}
    {\end{small}\par\endgroup}
{\begin{list}{$\bullet$}
 {\settowidth{\labelwidth}{\textsf{$\bullet$}} \setlength{\leftmargin}{10pt}}}
{\end{list}}
\newcounter{ssample}[section]
\newcounter{insertcount}
\noindent\begin{small}{\color{blue} \stepcounter{insertcount}
          {
            \bf Insert \arabic{insertcount}. #1.}
            \addcontentsline{toc}{subsection}{{\ \ \small  Insert \arabic{insertcount}: #1}}
               \leavevmode  }
\newcommand{\mrmk}[1]
{{\tiny$^{\spadesuit}$}\marginpar{\fbox{\footnotesize #1}}}
\def\strutdepth{\dp\strutbox}%
\def\marginalnote#1{\strut\vadjust{\kern-\strutdepth\specialnote{#1}}}%
\def\specialnote#1{\vtop to \strutdepth{\baselineskip%
\strutdepth\vss\llap{\hbox{\scriptsize \bf #1}}\null}}%
\newcommand{\RR}{\mathbb{R}}
\newcommand{\QQ}{\mathbb{Q}}
\def\CF{\mathcal F}
\newcommand{\CR}{\mathcal R} 
\newcommand{\NN}{\mathbb N}
\newcommand{\co}{\circ}
\newcommand{\1}{\mathbf{1}}
\def\CM{\mathcal{M}}
\newcommand*\bbar[1]{%
  \hbox{%
    \vbox{%
      \hrule height 0.5pt 
      \kern0.5ex
      \hbox{%
        \kern-0.1em
        \ensuremath{#1}%
        \kern-0.1em
      }%
    }%
  }%
} 
\gdef\mh{\text{-}}
\gdef\Av{\operatorname{Av}}
\gdef\CP{\mathcal{P}}
\gdef\CB{\mathscr{B}}
\gdef\CB{\mathcal{B}}
\gdef\CP{\mathcal{P}}
\gdef\CM{\mathcal{M}}
\gdef\dcup{\,\dot{\cup}\,}
\gdef\ttimes{{\times}}
\gdef\tttimes{{\ttimes}\dotsb\ttimes}
\gdef\ootimes{{\otimes}}
\gdef\oootimes{{\ootimes}\dotsb\ootimes}
\gdef\lltimes{{\ltimes}}
\gdef\llltimes{{\lltimes}\dotsb\lltimes}
\gdef\co{\textrm{c}}
\title{Definable regularity lemmas for NIP hypergraphs}
\author{Artem Chernikov} \address{Department of Mathematics, 
University of California Los Angeles,
Los Angeles, CA 90095-1555} \email{chernikov@math.ucla.edu}
\author{Sergei Starchenko} \address{Department of Mathematics, University of Notre Dame, Notre Dame,
  IN 46556} \email{Starchenko.1@nd.edu}
\definecolor{RED}{rgb}{1,0,0}\definecolor{BLUE}{rgb}{0,0,1} 
\providecommand{\DIFaddbegin}{} 
\providecommand{\DIFaddend}{} 
\begin{document}

{\abstract We present a systematic study of the regularity phenomena for NIP hypergraphs and connections to the theory of (locally) generically stable measures, providing a model-theoretic hypergraph version of the results from \cite{alon2007efficient, lovasz2010regularity}. We  also consider the two extremal cases of regularity for stable and distal hypergraphs, improving and generalizing the results from \cite{distal} and \cite{ms}. Finally, we consider a related question of the existence of large (approximately)
  homogeneous definable subsets of NIP hypergraphs and provide some positive results and counterexamples, in particular for graphs definable in the $p$-adics.}

\maketitle

\section{Introduction} 
Szemer\'edi's regularity lemma is a fundamental result in (hyper-)graph combinatorics with numerous applications in extremal combinatorics, number theory and computer science (see
\cite{komlos1996szemeredi} for a survey). We recall it in a simplified  form.
 By a graph $G=(V,E)$ we mean a set $V$ with a symmetric subset
$E\subseteq V^2$. For $A,B\subseteq V$ we denote by $E(A,B)$ the set
of edges between $A$ and $B$, i.e. $E(A,B)=E\cap (A\ttimes B)$.  Given $M \in \mathbb{N}$, we write $[M]$ to denote the set $\{1, 2, \ldots, M \}$.

\begin{fact}[Szemer\'{e}di regularity lemma]\label{lem:sz-reg-graph}

For every real $\varepsilon > 0$ there exists some constant $M = M(\varepsilon) \in \mathbb{N}$ satisfying the following.
Let $G=(V,E)$ be an arbitrary finite graph. Then there is a
partition $V=V_1\cup\dotsb\cup V_M$ into disjoint sets, real numbers
$\delta_{ij}, i,j \in [M]$, and an exceptional set of pairs $\Sigma \subseteq
[M]\ttimes [M]$ such that 
\[ \sum_{(i,j)\in \Sigma}  |V_i||V_j| \leq \varepsilon|V|^2  \]
and for each $(i,j)\in [M]\ttimes [M] \setminus \Sigma$ we have 
\[  | \, |E(A,B)|- \delta_{ij}|A||B|\, | <\varepsilon
|V_i||V_j| \]
for all $A\subseteq V_i$, $B\subseteq{ V_j}$.
\end{fact}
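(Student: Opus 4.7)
The plan is to prove this via Szemer\'edi's energy-increment argument. For any partition $\CP=\{V_1,\dotsc,V_M\}$ of $V$, set $\delta_{ij}:=|E(V_i,V_j)|/(|V_i||V_j|)$ and define the mean-square density
\[ q(\CP) := \sum_{i,j \in [M]} \frac{|V_i||V_j|}{|V|^2}\,\delta_{ij}^2, \]
which always lies in $[0,1]$. I call a pair $(V_i,V_j)$ \emph{$\varepsilon$-regular} when $\bigl| |E(A,B)| - \delta_{ij}|A||B| \bigr| < \varepsilon |V_i||V_j|$ for every $A\subseteq V_i$ and $B\subseteq V_j$, and call $\CP$ $\varepsilon$-regular if the set $\Sigma$ of non-regular pairs satisfies $\sum_{(i,j)\in\Sigma}|V_i||V_j|\le\varepsilon|V|^2$. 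With the $\delta_{ij}$ so chosen, producing an $\varepsilon$-regular partition is exactly the conclusion of the lemma. (The usual formulation requires the density estimate only for $|A|\ge\varepsilon|V_i|$, $|B|\ge\varepsilon|V_j|$; the version stated here follows because for smaller $A$ or $B$ both $|E(A,B)|$ and $\delta_{ij}|A||B|$ are automatically bounded by $\varepsilon|V_i||V_j|$.)

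The core step is the following refinement claim: if $\CP$ is not $\varepsilon$-regular, then there is a refinement $\CP'$ with $|\CP'|\le M\cdot 2^{M}$ and $q(\CP')\ge q(\CP)+c\varepsilon^{5}$ for an absolute constant $c>0$. To prove it, for each bad pair $(i,j)\in\Sigma$ choose witness sets $A_{ij}\subseteq V_i$ and $B_{ij}\subseteq V_j$ with $\bigl||E(A_{ij},B_{ij})|-\delta_{ij}|A_{ij}||B_{ij}|\bigr|\ge\varepsilon|V_i||V_j|$, and refine each $V_i$ by the boolean algebra generated by $\{A_{ij},B_{ji}:j\in[M]\}$. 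A short Cauchy--Schwarz computation, applied cell by cell, shows that splitting $V_i\ttimes V_j$ by the partition $\{A_{ij},V_i\setminus A_{ij}\}\ttimes\{B_{ij},V_j\setminus B_{ij}\}$ increases the contribution of $(i,j)$ to the energy by at least $\varepsilon^{4}\cdot|V_i||V_j|/|V|^2$; summing over $\Sigma$ yields the claimed overall gain.

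Granted the refinement claim, the proof finishes by iteration. Starting from the trivial partition $\{V\}$, and applying the refinement claim at most $N=\lceil 1/(c\varepsilon^{5})\rceil$ times, one reaches an $\varepsilon$-regular partition $\CP$; its cardinality is bounded by iterating the map $M\mapsto M\cdot 2^{M}$ a total of $N$ times, which gives the familiar tower-of-exponentials bound for $M(\varepsilon)$.

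The main obstacle is the Cauchy--Schwarz energy-increment inequality, which must be stated uniformly enough to survive summation over $\Sigma$ (one really needs that the contribution of each pair to $q$ is a convex function of the refinement, so that refining can only increase it, and strictly so when there is a regularity-violating witness). Once this local step is in place, packaging it into a partition-level refinement statement and iterating are essentially bookkeeping.
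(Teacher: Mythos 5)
The paper states this as a background \emph{Fact} (citing the survey literature) and gives no proof, so there is nothing internal to compare against; your energy-increment argument is the classical proof of the lemma and is correct, including the observation that the ``all $A\subseteq V_i$, $B\subseteq V_j$'' form follows from the usual form restricted to $|A|\ge\varepsilon|V_i|$, $|B|\ge\varepsilon|V_j|$, since otherwise both $|E(A,B)|$ and $\delta_{ij}|A||B|$ are below $\varepsilon|V_i||V_j|$. The only quibble is bookkeeping: refining each $V_i$ by the up to $2M$ witness sets $\{A_{ij}, B_{ji}\}$ gives at most $M\cdot 2^{2M}$ cells rather than $M\cdot 2^{M}$, which changes nothing about the tower-type bound on $M(\varepsilon)$.
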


The bounds on $M(\varepsilon)$ are known to be extremely bad: Gowers had demonstrated that it  grows as an exponential tower of height polynomial in $(\frac{1}{\varepsilon})$ (see e.g. \cite{moshkovitz2013short}).

Several recent results demonstrate that better bounds and stronger regularity can be obtained for certain families of hypergraphs satisfying additional combinatorial restrictions. For example, in \cite{fox2012overlap, fox2015polynomial} it is shown that when the edge relation is semialgebraic, of bounded description complexity, then the size of the partition can be bounded by a polynomial in terms of $\frac{1}{\varepsilon}$, all good pairs are actually homogeneous, and the sets in the partition can be chosen to be semialgebraic, of bounded complexity. Similar polynomial bounds were obtained by Tao \cite{tao2012expanding} for algebraic hypergraphs of bounded description complexity in large finite fields and by Alon, Fischer, Newman \cite{alon2007efficient} and Lov{\'a}sz, Szegedy \cite{lovasz2010regularity} for graphs of bounded VC-dimension. 

These results can be naturally viewed as results about hypergraphs with the edge relation definable, in the sense of first-order logic, in certain tame structures, and the restrictions on the complexity of the edge relation in all of the results above are surprisingly well aligned with generalized stability and classification in model theory.
For example, as demonstrated in \cite{distal} (see also \cite{simon2016note}), the results in \cite{fox2012overlap, fox2015polynomial} can be generalized to graphs definable in arbitrary \emph{distal} structures (see Section \ref{sec: distal case}), and that moreover this strong form of regularity characterizes distality. Here ``semialgebraic graphs'' corresponds to the special case of ``graphs definable in the field of reals'', but the result also applies to graphs definable in the $p$-adics, for example. Similarly, the result in \cite{tao2012expanding} can be viewed as a result about graphs definable in \emph{pseudofinite fields}, and admits a natural model theoretic proof and generalizations \cite{pillay2013remarks, HrushovskiUnpubl, garcia2015pseudofinite}. Another very important example is given by the regularity lemma for \emph{stable graphs} \cite{ms} (model-theoretic stability is the notion of tameness at the core of Shelah's classification \cite{ShelahCT}, see Section \ref{sec: stable case}). Similarly, the results in \cite{lovasz2010regularity} can be interpreted as results about graphs definable in \emph{NIP structures} (see below).

Another point of view on the hypergraph regularity phenomenon is through the prism of probability theory. Namely, the existence of a regular partition can be viewed as a finitary version of the existence of the conditional expectation. There are several proofs of the hypergraph regularity lemma in the literature making this precise by reducing working with a family of finite graphs to working with some kind of an analytic ``limit object'' equipped with a probability measure (see \cite{lovasz2007szemeredi, elek2007limits, CDM26}). 

Similarly, regularity for restricted families of graphs can be viewed as the study of (finitely additive) probability measures on certain restricted families of Boolean algebras. Such measures in the model-theoretic setting of Boolean algebras of definable sets were introduced by Keisler \cite{keisler1987measures}, and recently the study of Keisler measures has attracted a lot of attention, especially the study of  \emph{generically stable measures} in \emph{NIP structures} \cite{hrushovski2011nip, hrushovski2013generically, Bourbaki, CheSurv}. The class of NIP structures  was introduced by Shelah in his work on the classification
program \cite{ShelahCT}}. It contains all stable and $o$-minimal structures, along with other important algebraic examples, and  we refer to
\cite{adler2008introduction, simon2015guide} for an introduction to the area (see also Section \ref{sec: hypergraphs definable in NIP} for the definition and some examples). The study of Keisler measures in NIP structures can be viewed as a model theoretic counterpart of the Vapnik-Chervonenkis theory \cite{vc}, and generically stable measure are those Keisler measures that satisfy a form of the VC-theorem for all uniformly definable families (see Section \ref{sec: hypergraphs definable in NIP}).

The connection between the study of generically stable measures in model theory and regularity lemmas for definable hypergraphs was pointed out in the distal case in \cite{distal}, and the aim of this article is to systematically develop these connections for the general (local) NIP setting.

In Section \ref{sec:setting} we give a decomposition result for products of finitely additive probability measures that are well-approximated by counting measures (which we call \emph{finitely approximated measures}, see Section \ref{sec:qiant-free-keisl}), with and without the assumption of finite VC-dimension. Namely, assume we are given some sets $V_1, \ldots, V_k$ equipped with Boolean algebras $\CB_1, \ldots, \CB_k$ of subsets and finitely additive probability measures $\mu_1, \ldots, \mu_k$ on them. Let $R \subseteq V_1 \times \ldots \times V_k$ be an edge relation such that all of its fibers are measurable. It then follows from the finite approximation assumption that there is a Boolean algebra $\CB$ of subsets of $V_1 \times \ldots \times V_k$ extending the product Boolean algebra $\CB_1 \otimes \ldots \otimes \CB_k$ with $R \in \CB$, and such that $\CB$ can be equipped with a natural product measure $\mu$ satisfying a Fubini property (Section \ref{sec:qiant-free-keisl}). Moreover, relatively to $\mu$, the set $R$ can be approximated by a union of boxes (i.e. sets of the form $A_1 \times \ldots \times A_k$ with $A_i \in \CB_i$) up to measure $\varepsilon$, for any real $\varepsilon > 0$, and in the finite VC-dimension case the number of boxes needed is polynomial in $\frac{1}{\varepsilon}$ (Theorem \ref{thm:main}). On the one hand, this can be viewed as a version of the results  for graphons from \cite{lovasz2010regularity}  in a setting better suited for the model-theoretic applications, and generalized to hypergraphs. On the other hand,  
this result can also be viewed as developing elements of the \emph{local} theory of generically stable measures, and refining some of the results in \cite{hrushovski2013generically} for such measures. In our setting, instead of working with Borel measures on the space of types, we use directly the (equivalent) theory of integration for finitely additive measures (sometimes called the theory of charges \cite{charges}), and we give some details for the sake of exposition. Note that we are only assuming bounded VC-dimension on $R$-definable sets, and our definition of a finitely approximated 
  measure is weaker than the definition of fim measures in \cite{hrushovski2013generically} (see Remark \ref{rem: fap vs fim}), so we have to redefine the product of finitely approximated measures.

In Section \ref{sec:appl-hypergr} we apply these results to obtain a \emph{definable} regularity lemma for hypergraphs of bounded VC-dimension, in particular for hypergraphs definable in an NIP structure, uniformly over all generically stable measures. 
For a general $k$-ary hypergraph $(V,E)$ with $E\subseteq{V\choose k}$ for some $k\geq 2$, with $V$ a large finite set, the hypergraph regularity lemma 
\cite{nagle:MR2198495,rodl:MR2069663,MR2373376} allows to represent the characteristic function $\chi_E: {V\choose k} \to \{0,1\}$ of the hyperedge relation in the form 
\[\chi_E=f_{k-1}+\cdots+f_1+f^\bot\]
where: $f_1$ has the form $\sum_{i_1,\ldots,i_k}\alpha_{i_1,\ldots,i_k}\prod_j\chi_{V_{i_j}}(x_j)$ for some partition $V=\bigcup_{i\leq n}V_i$ and some real numbers $\alpha_{i_1, \ldots, i_k}$ (i.e.~a partition of the vertices with weights $\alpha_{i_1, \ldots, i_k}$ indicating the density of edges on $V_{i_1} \times \ldots \times V_{i_k}$); the $f_j$ in general are sums of $j$-ary cylinder sets (for instance, $f_2$ is, roughly speaking, the portion of $\chi_E$ which can be described using directed graphs, $f_3$ using directed $3$-hypergraphs, etc.~up to $k-1$); and $f^\bot$ is the quasi-random $k$-ary function (representing the random determination of which which hyperedges of $E$ are actually present). In Theorem \ref{thm:main1} we show that if $E$ has small VC-dimension then all summands of this decomposition except for $f_1$ are small, hence $E$ is approximated by a union of boxes.
More precisely, for each $k,d$ and $\varepsilon>0$, there is a bound $N = O_{k,d} \left( \left(\tfrac{1}{\varepsilon}\right)^{4(k-1)d^2} \right)$ so that whenever $(E,V)$ is a $k$-ary hypergraph with VC-dimension at most $d$, there is a partition of $V$ into $N$ parts so that $E$ is given, up to symmetric difference of measure $< \varepsilon$, by a union of boxes of the form $V_{i_1} \times \ldots \times V_{i_k}$. Moreover, each of the sets $V_i$ in the partition is $E$-definable, i.e.~given by a Boolean combination of the fibers of $E$ of size bounded in terms of $d,k,\varepsilon$. 

In Section \ref{sec: stable and distal} we discuss regularity in two extreme opposite special cases of the NIP hypergraphs. Namely, we generalize and improve the aforementioned stable \cite{ms, malliaris2016stable} and distal \cite{distal} regularity lemmas in our setting. The (global) model-theoretic implications of these results can be summarized as follows.

\begin{thm} \label{thm: everything}
\begin{enumerate}

\item (Corollary \ref{cor: reg in NIP}) Let $\CM$ be an NIP structure and $k \geq 2$. For every definable relation $E\left(x_{1},\ldots,x_{k}\right)$
there is some $c=c\left(E\right)$ such that for any $\varepsilon>0$
and any generically stable\textcolor{red}{{} }Keisler measures $\mu_{i}$
on $M^{|x_{i}|}$ there are partitions $M^{|x_i|}=\bigcup_{j<K}A_{i,j}$
and a set $\Sigma\subseteq\left\{ 1,\ldots,K\right\} ^{k}$ such that:
\begin{enumerate}
\item $K\leq\left(\frac{1}{\varepsilon}\right)^{c}$.
\item $\mu\left(\bigcup_{\left(i_{1},\ldots,i_{k}\right)\in\Sigma}A_{1,i_{1}}\times\ldots\times A_{k,i_{k}}\right)\leq \varepsilon$,
where $\mu=\mu_{1}\otimes\ldots\otimes\mu_{k}$,
\item for all $\vec{i}=\left(i_{1},\ldots,i_{k}\right) \notin \Sigma$ we have $$\left|\mu \left(E \cap (A_{1,i_1} \times \ldots \times A_{k,i_k}) \right) - \delta_{\vec{i}}\mu(A_{1,i_1} \times \ldots \times A_{k,i_k}) \right| < \varepsilon \mu(A_{1, i_1} \times \ldots \times A_{k, i_k})$$ 
for some $\delta_{\vec{i}} \in \{ 0,1\}$.
\item each $A_{i,j}$ is defined by an instance of an $E$-formula depending
only on $E$ and $\varepsilon$.

\end{enumerate}

	\item (Corollary \ref{cor: definable stable regularity}) Assume that $\CM$ is stable. Then, in addition: 
	\begin{enumerate}
	\item we can take the $\mu_i$'s to be arbitrary Keisler measures (as all measures are automatically generically stable in this case),
	\item we may assume that $\Sigma = \emptyset$, i.e.~all tuples in the partition are $\varepsilon$-regular.
	\end{enumerate}
\item (Theorem \ref{thm: distal regularity}) Assume that $\CM$ is distal. Then in addition we have:
\begin{enumerate}
	\item for all $\left(i_{1},\ldots,i_{k}\right)\notin \Sigma$, either $\left(A_{1,i_{1}}\times\ldots\times A_{k,i_{k}}\right)\cap E=\emptyset$
or $A_{1,i_{1}}\times\ldots\times A_{k,i_{k}}\subseteq E$,

	\item if the relation $E$ is defined by an instance of a formula $\theta$, then we can take each $A_{i,j}$ to be defined by an instance of a formula $\psi_{i}\left(x_{i},z_i \right)$
which only depends on $\theta$ (and not on $\varepsilon$).

	\end{enumerate}
\end{enumerate}

\end{thm}

Finally, in Section \ref{sec: large homog set}, we consider a related question of the existence of large (approximately) homogeneous definable subsets of definable NIP hypergraphs (i.e., the measure theoretic versions of the results of Erd\H os, Hajnal and R\"odl, see e.g. \cite{fox2008induced}). As a corollary of the regularity lemma, we show  that for every $d$ and $\alpha, \varepsilon>0$ there is some $\delta = \delta(d, \alpha, \varepsilon) > 0$ such that the following holds. Let  a hypergraph $R \subseteq V_1 \times \ldots \times V_k$ of VC-dimension at most $d$ be given, and  let  $\mu_i$ be measures on $V_i$ which are all finitely approximated on $R$. Assume that the density of $R$ on $V_1 \times \ldots \times V_k$ (relatively to the product measure) is at least $\alpha$. Then it is possible to find $R$-definable sets $A_i \subseteq V_i$ such that $\mu_i(A_i) \geq \delta$ and such that the density of $R$ on $A_1 \times \ldots \times A_k$ is at least $1 - \varepsilon$  (Theorem \ref{thm: density approx EH}).
 The situation is quite different in the non-partitioned case. Namely, when $V = V_1 = \ldots = V_k$, $\mu = \mu_1 = \ldots = \mu_k$ and $R$ is a symmetric relation, we would like to find a definable subset $A$ of $V$ of positive measure, such that the density of $R$ on $A$ is $\varepsilon$-close to $0$ or $1$ (the result above applied to this situation would typically produce \emph{disjoint} sets $A_1, \ldots, A_k$). A classical theorem of R\"odl (see Fact \ref{fac: Rodl}) implies that this is indeed possible for pseudofinite counting measures, with all internal sets added to the language. We provide an example of a definable graph in the $p$-adics which does not admit uniformly definable sets of positive measure with this property, relatively to the additive Haar measure (Section \ref{sec: p-adics counterex}) (hence demonstrating that an analogue of R\"odl's theorem does not hold for finitely approximated measures in general).

\subsubsection*{Related work}
The results of this paper originally appeared in a July 2016 preprint \cite{chernikov2016definable}. In a preprint \cite{fox2017erdos} from October 2017 (which has later appeared as \cite{fox2019erdHos}), its authors obtain a stronger bound on the size of the partition for hypergraph regularity with finite VC-dimension in the case of uniform measures on finite spaces (corresponding to Corollary \ref{lem:sze-hyper}) and demonstrate its optimality.
Independently from our work in the present paper, in  a preprint \cite{ackerman2017stable} its authors obtain a version of the stable case of Theorem \ref{thm: everything} for finite hypergraphs (and more generally, finite relational structures).
The NIP hypergraph regularity lemma is further generalized to $n$-dependent hypergraphs in \cite{chernikov2020hypergraph}.

Recall that a partition is \emph{equitable} if any two sets in it have the same measure (possibly up to a rounding error). We remark that it is not always possible to choose an \emph{$E$-definable} equipartition with the additional properties discussed above in the NIP, stable and distal cases. However,
restricting to finite hypergraphs with uniform finitely supported measures and giving up definability, this can be achieved in each of the three cases. See \cite{fox2019erdHos} for the finite VC-dimension/NIP hypergraphs; \cite{ms} for stable graphs and \cite{ackerman2017stable} for stable hypergraphs; and \cite[Section 5.3]{distal} (along with \cite[Section 3]{simon2016note}) for distal hypergraphs (where in fact a definable equitable partition can be chosen in many cases).

\subsubsection*{Notation} Given $r,s, \delta \in \mathbb{R}$ with $\delta \geq 0$, we write $r \approx^{\delta} s$ if $|r - s| \leq \delta$.

\subsection*{Acknowledgements}
 We would like to thank the referee, Matthias Aschenbrenner and Roland Walker for their very helpful comments on the earlier versions of the article.
 Chernikov was supported by the NSF Research Grant DMS-1600796, by the NSF CAREER grant DMS-1651321 and by an Alfred P.~Sloan Fellowship. Starchenko was  supported by the NSF Research Grant DMS-1500671.

\section{Decomposing product measures} 
\label{sec:setting}
In this section, we present some general results on decomposing products of finitely additive probability measures that can be locally approximated by frequency measures.

\subsection{Notation}
We will use the following notation:
\begin{itemize}
\item  For $k\in \NN$ we will denote by $[k]$ the set
  $\{1,\dotsc,k\}$. 
\item For an integer $k$ and $I\subseteq [k]$ we will denote by $I^\co$
  the complement $I^\co=[k]\setminus I$. For $i\in [k]$ instead of
  $\{i\}^\co$ we write $i^\co$. 
\item For sets $V_1,\dotsc, V_k$ and $I\subseteq [k]$ we denote by $V_I$ the product 
$V_I=\prod_{i\in I} V_i$.   
\item  Let  $R \subseteq V_1\tttimes V_k$ be a $k$-ary relation and $I\subseteq [k]$. Viewing
  $R$ as a binary relation on  $V_I\ttimes V_{I^\co}$, 
for  $b\in V_{I^\co}$
we denote by $R_{b}$ the fiber 
\[ R_{b} =\{ a \in V_I \colon (a,b) \in R\}. \]
\end{itemize}

\begin{defn} \DIFaddbegin \label{defn:const}
 \DIFaddend Let $V_1,\dotsc,V_k$ be  sets, $R\subseteq
 V_1\tttimes V_k$ and  $I \subseteq  [k]$.  
 \DIFaddbegin 

 \begin{enumerate}
 \item \DIFaddend We say that a subset
 $X\subseteq V_I$ is \emph{$R$-definable over a set $D\subseteq V_{I^\co}$}
if it is a finite Boolean combination of sets of the form 
$R_{b}$ with $b \in D$, and say that $X$ is \emph{$R$-definable} if it is
$R$-definable over $V_{I^\co}$.    
 \item We say
that a set  $A\subseteq V_1\tttimes V_k$  is \DIFaddbegin \emph{\DIFaddend $R_{\otimes}$-definable} if $A$ can be written as a
finite union of sets of the form $X_1\tttimes X_k$, such that each $X_i \subseteq 
V_i$ is $R$-definable. 
\item In addition, given a tuple $\vec D =(D_1,\dotsc, D_k)$ with $D_i \subseteq V_{i^\co}$,  we say that $A$
   is $R_\ootimes$-definable over $\vec D$ if  every $X_i$ above is $R$-definable over $D_i$.  Note that the subsets of $V_1 \times \ldots \times V_k$ which are $R_\ootimes$-definable over $\vec{D}$ form a Boolean algebra.
 For such a tuple $\vec D$, we define 
$\|\vec D\| :=\max\{|D_i| \colon i\in [k]\}$.

 \end{enumerate}

\DIFaddend \end{defn}

We recall the notion of VC-dimension (see e.g. \cite[Chapter 10]{matouvsek2002lectures}). Let $V$ be a set, finite or infinite, and let $\mathcal{F}$ be a
family of subsets of $V$.  Given $A \subseteq V$, we say that it is \emph{shattered} by
$\mathcal{F}$ if for every $A' \subseteq A$ there is some $S \in \mathcal{F}$ such that
$A \cap S = A'$. The \emph{VC-dimension of $\mathcal{F}$}, that we will denote by $VC(\mathcal{F})$, is the smallest integer
$d$ such that no subset of $V$ of size $d+1$ is shattered by $\CF$.  For a set $B\subseteq V$, let
$\mathcal{F}\cap B=\left\{ A\cap B:A\in\mathcal{F}\right\}$. The \emph{shatter function of $\CF$} is  defined as 
$\pi_{\mathcal{F}}\left(n\right)=\max\left\{ \left|\mathcal{F}\cap B\right|:B\subseteq
  V,\left|B\right|=n\right\} $.

\begin{fact}[Sauer-Shelah lemma]\label{fac: SauerShelah} If $VC(\mathcal{F}) \leq d$ then for
  $n\geq d$ we have
  $\pi_{\mathcal{F}}\left(n\right)\leq\sum_{i\leq d}{n \choose i}=O\left(n^{d}\right)$.
\end{fact}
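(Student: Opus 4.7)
The plan is to derive the Sauer-Shelah bound from the stronger \emph{Pajor lemma}: for any finite family $\mathcal{F}$ of subsets of a finite set $V$, the cardinality $|\mathcal{F}|$ is at most the number of subsets of $V$ shattered by $\mathcal{F}$. Once Pajor's lemma is established, the desired bound is immediate: choose $B \subseteq V$ with $|B| = n$ realizing $\pi_{\mathcal{F}}(n)$ and apply Pajor to the trace family $\mathcal{F} \cap B$. The hypothesis $VC(\mathcal{F}) \leq d$ forces every shattered subset of $B$ to have size at most $d$, and so there are at most $\sum_{i \leq d} \binom{n}{i}$ of them.

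I would prove Pajor's lemma by induction on $|V|$, with the case $|V| = 0$ being trivial. For the inductive step pick any $v \in V$ and decompose $\mathcal{F}$ by its trace on $V \setminus \{v\}$, setting
\[ \mathcal{G} = \{F \cap (V \setminus \{v\}) : F \in \mathcal{F}\}, \quad \mathcal{H} = \{G \subseteq V \setminus \{v\} : G \in \mathcal{F} \text{ and } G \cup \{v\} \in \mathcal{F}\}. \]
Grouping elements of $\mathcal{F}$ by their trace yields the key counting identity $|\mathcal{F}| = |\mathcal{G}| + |\mathcal{H}|$: every trace in $\mathcal{G}$ is produced by one or two elements of $\mathcal{F}$, and the traces produced by two elements are exactly the members of $\mathcal{H}$.

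Now apply the inductive hypothesis to $\mathcal{G}$ and to $\mathcal{H}$, both living on $V \setminus \{v\}$. Any subset of $V \setminus \{v\}$ shattered by $\mathcal{G}$ is automatically shattered by $\mathcal{F}$. If $A \subseteq V \setminus \{v\}$ is shattered by $\mathcal{H}$, then for each $A' \subseteq A$ there is some $G \in \mathcal{H}$ with $G \cap A = A'$; since both $G$ and $G \cup \{v\}$ belong to $\mathcal{F}$, the set $A \cup \{v\}$ is shattered by $\mathcal{F}$. These two collections of shattered sets are disjoint (one consists of sets not containing $v$, the other of sets containing $v$), so $|\mathcal{F}| = |\mathcal{G}| + |\mathcal{H}|$ is bounded by the total number of sets shattered by $\mathcal{F}$, closing the induction.

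The main point requiring care is the identity $|\mathcal{F}| = |\mathcal{G}| + |\mathcal{H}|$. The tempting but misleading decomposition is to split $\mathcal{F}$ into members containing $v$ and members avoiding $v$; the correct move is to group by trace on $V \setminus \{v\}$, whose fibres have size $1$ or $2$, with the doubled fibres counted exactly by $\mathcal{H}$. Once this is in place, the rest is a short verification and the inductive step goes through without further complications.
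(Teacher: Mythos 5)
Your proof is correct. The paper does not prove this statement at all --- it is quoted as a classical fact with a pointer to the literature --- so there is no argument of the authors' to compare against; what you give is a complete, self-contained proof via Pajor's lemma, which is one of the standard routes (the other common one being the direct double induction on $n$ and $d$ yielding the recursion $\pi_{\mathcal{F}}(n)\leq \pi_{\mathcal{F}}(n-1)+\pi_{\mathcal{F}'}(n-1)$ for a family of VC-dimension $d-1$). The delicate points are all handled: the counting identity $|\mathcal{F}|=|\mathcal{G}|+|\mathcal{H}|$ via fibres of the trace map, the disjointness of the two families of shattered sets (those avoiding $v$ versus those containing $v$), and the final step that under $VC(\mathcal{F})\leq d$ every shattered subset of an $n$-point set $B$ has size at most $d$, so Pajor applied to $\mathcal{F}\cap B$ gives $|\mathcal{F}\cap B|\leq\sum_{i\leq d}\binom{n}{i}$.
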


\begin{defn}\label{def:nip}
For sets  $V_1,\dotsc,V_k$ and 
a set 
$R\subseteq V_1\tttimes V_k$ we say that $R$ has \emph{VC-dimension at most
$d$} if for every $i \in [k]$ the family $\{ R_a \colon a\in
V_{[k] \setminus \{ i\}}\}$ of subsets of $V_i$ is a family with VC-dimension at most $d$. 
\end{defn}

The next fact follows from the Sauer-Shelah Lemma.

\begin{fact}
  \label{fact:s-sh}
For every $d\in \NN$ there is a constant $C_d$ such that for any
relation $R\subseteq V\times W$ of VC-dimension at
most $d$ and any finite $D\subseteq V$, the number of atoms in the Boolean algebra of subsets of $W$ which are $R$-definable over $D$ is at most $C_d |D|^d$.  
\end{fact}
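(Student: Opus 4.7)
The plan is to reduce to the Sauer--Shelah lemma (Fact~\ref{fac: SauerShelah}) applied to the dual family of fibers over $W$. The first thing to observe is that Definition~\ref{def:nip} makes the VC-dimension bound on $R \subseteq V \times W$ symmetric in its two coordinates: taking $I = \{1\}$ and $I = \{2\}$ in the definition, both the family $\CF := \{R_a : a \in V\} \subseteq \mathcal{P}(W)$ and the dual family $\CF^* := \{R^w : w \in W\} \subseteq \mathcal{P}(V)$ have VC-dimension at most $d$. It is the dual family $\CF^*$ that I will feed into Sauer--Shelah.

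Next, I would analyze the Boolean algebra $\mathscr{A}_D \subseteq \mathcal{P}(W)$ generated by $\{R_b : b \in D\}$, since every $R$-definable subset of $W$ over $D$ is by definition an element of $\mathscr{A}_D$. The atoms of $\mathscr{A}_D$ are precisely the equivalence classes of the relation on $W$ given by $w_1 \sim w_2$ iff $w_1 \in R_b \Leftrightarrow w_2 \in R_b$ for every $b \in D$. Rewriting the condition as $R^{w_1} \cap D = R^{w_2} \cap D$ gives a bijection between the atoms of $\mathscr{A}_D$ and the trace
\[
\CF^* \cap D \; := \; \{\, R^w \cap D : w \in W \,\}.
\]

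Applying Sauer--Shelah to $\CF^*$ on the finite set $D$ then bounds $|\CF^* \cap D| \leq \sum_{i \leq d} \binom{|D|}{i}$, and since each binomial coefficient is at most $|D|^d/i!$ (for $|D| \geq d$, with the small-$|D|$ cases absorbed into the constant), the total is at most $C_d |D|^d$ for an appropriate $C_d$ depending only on $d$. This controls the combinatorial complexity of $\mathscr{A}_D$, and hence the count of distinct $R$-definable subsets of $W$ over $D$, giving the claimed polynomial bound.

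There is no real obstacle here: the entire content is the symmetric VC-dimension convention of Definition~\ref{def:nip} together with Sauer--Shelah applied in the ``dual'' direction, packaging the bound on the number of membership patterns $R^w \cap D$ of elements of $W$ over the parameter set $D$. The only cosmetic point is choosing $C_d$ uniformly so that the bound holds also in the trivial range $|D| < d$, which is routine.
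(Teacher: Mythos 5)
Your argument is correct and is exactly the intended one: the paper gives no proof beyond the remark that the fact follows from Sauer--Shelah, and the route you take --- the dual family $\{R^w : w \in W\}$ has VC-dimension at most $d$ because Definition~\ref{def:nip} is symmetric in the coordinates, so Fact~\ref{fac: SauerShelah} bounds the number of realized traces $R^w \cap D$, which are in bijection with the atoms of the Boolean algebra generated by $\{R_b : b \in D\}$ --- is the standard dual-shatter-function computation. One caveat, which is an imprecision in the statement of the Fact itself rather than a flaw in your combinatorics: what this argument bounds is the number of \emph{atoms} of that Boolean algebra, whereas the number of all $R$-definable subsets of $W$ over $D$ (i.e.\ all Boolean combinations of the $R_b$, $b \in D$) is $2$ to that power and is not polynomial in $|D|$, so your last sentence silently passes from the atom count to the full count. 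Since every application of Fact~\ref{fact:s-sh} in the paper (the bounds on $m$ and on $|\CP_i|$ in Theorems~\ref{thm:main}, \ref{thm:main1} and \ref{thm:main2}) only uses the bound on the number of atoms of the relevant Boolean algebra, the atom reading is the intended one, and your proof establishes exactly what is needed.
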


\subsection{Basics  on Boolean algebras and measures.}
\label{sec:compr-syst-bool}

Recall that for a set $V$, \emph{a field on $V$} is a Boolean algebra  
of subsets of $V$. 

For sets $V_1,\dotsc,V_k$ and fields 
$\CB_i$ on $V_i$, $i\in [k]$, as usual, we denote by
$\CB_1\oootimes\CB_k$ the field on 
$V_1\tttimes V_k$ generated by the sets
$X_1\tttimes X_k$ with $X_i\in \CB_i$. It is not hard to see
that every set in $\CB_1\oootimes \CB_k$ is a disjoint
union of sets of the form $X_1\tttimes X_k$ with $X_i\in \CB_i$. Given $I = \{ i_1, \ldots, i_n \} \subseteq [k]$, we let $\CB_{I} := \bigotimes_{i \in I} \CB_i = \CB_{i_1} \otimes \ldots \otimes \CB_{i_n}$.

\subsubsection{Finitely additive probability measures}
\label{sec:prel-finit-addit}

\begin{defn}
Let $V$ be a set and $\CB$ be a field on $V$. In this paper, a \emph{measure
  on $\CB$}  
  is a finitely additive probability measure on $\CB$,
i.e.~a function $\mu\colon \CB\to \RR^{\geq 0}$ such that $\mu(\emptyset)=0$,
$\mu(V)=1$  and
$\mu(A\cup B)=\mu(A)+\mu(B)-\mu(A\cap B)$ for all $A,B\in \CB$.
\end{defn}

Let $V_1,\dotsc,V_k$ be sets and $\CB_i$ be fields on $V_i$, $i\in [k]$. 
Assume we have a measure  $\mu_i$ on $\CB_i$ for each $i\in[k]$. 
It is not hard to see that there  is a unique measure $\mu$ on $\CB_1\oootimes\CB_k$ with 
$\mu(A_1\tttimes A_k)=\prod_{i=1}^k \mu_i(A_i)$ for all  $A_i\in
\CB_i$, $i\in [k]$. 
 We will denote this measure   $\mu$ by $\mu_1{\tttimes}\mu_k$.

\subsubsection{Integration with respect to finitely additive measures}
\label{sec:integr-with-resp-1}

We will need some basic facts about integration relatively to finitely additive measures (we refer to \cite{charges} for a detailed account).

As usual for a set $V$ and a subset $X\subseteq V$ we will denote by
$\1_X$ the indicator function of $X$ on $V$.  

We fix a set $V$ and a field $\CB$  on $V$.

We say that a  function $f\colon V\to
\RR$ is \emph{$\CB$-simple}  if there are $X_1,\dotsc,X_n\in
\CB$ and $r_1,\dotsc,r_n\in \RR$ with $f=\sum_{i=1}^n r_i \1_{X_i}$.   
Obviously the set of all $\CB$-simple functions forms an
$\RR$-algebra.

For a measure  $\mu$ on $\CB$ and  a $\CB$-simple  function $f =\sum_{i=1}^n r_i \1_{X_i}$ 
we define 
\[ \int_V f\,d\mu = \sum_{i=1}^{n} r_i\mu(X_i). \]
It is easy to see that the above integral does not depend on a
representation of $f$ as a simple  function. If a subset
$A\subseteq V$ is in $\CB$ then we also define 
\[ \int_A f\,d\mu =  \int_V (\1_Af)\,d\mu = \sum_{i=1}^{n} r_i\mu(A\cap X_i). \]

\begin{rem}\label{rem:mu-as-int}
Clearly for $A\in \CB$ we have $\mu(A)=\int_V  \1_A \,d\mu $. 
\end{rem}

We say that a function $f\colon V\to \RR$ is \emph{$\CB$-integrable}, or
just \emph{integrable}, if it is 
in the closure of the set of  $\CB$-simple  functions with respect to the
$L_\infty$-norm, i.e. for all $\varepsilon > 0 $  there is a
$\CB$-simple 
function $g$
with $|f(x)-g(x)| < \varepsilon$ for all $x\in V$.
The following claim is obvious.

\begin{claim}  
\label{lem:def-sep}
A function $f\colon V\to \RR$ is $\CB$-integrable if and only if 
 for any $\varepsilon>0$ there are   $Y_1,\dotsc,Y_{n}\in \CB$
covering $V$ such that for any $i\in[n]$ and any $c,c'\in Y_i$ we
    have $|f(c)-f(c')| < \varepsilon$. 
\end{claim}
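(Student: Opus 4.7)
\textbf{Proof proposal for Claim \ref{lem:def-sep}.}

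The plan is to prove both directions directly from the definition of $\CB$-integrability, using the observation that the sets on which a $\CB$-simple function takes each of its finitely many values can serve as the cover, and conversely that a choice of one value per piece of a cover yields a $\CB$-simple approximant.

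For the forward direction, suppose $f$ is $\CB$-integrable and fix $\varepsilon>0$. Choose a $\CB$-simple function $g=\sum_{i=1}^{n} r_i \1_{X_i}$ with $\sup_{x\in V}|f(x)-g(x)|<\varepsilon/2$. By passing to the (finite) Boolean algebra generated by $X_1,\dots,X_n$ inside $\CB$, I may refine and rewrite $g$ so that the representing sets form a partition $Y_1,\dots,Y_m \in \CB$ of $V$ and $g$ is \emph{constant} on each $Y_j$. Then for any $j\in[m]$ and any $c,c'\in Y_j$, the triangle inequality gives
\[
|f(c)-f(c')|\le |f(c)-g(c)|+|g(c)-g(c')|+|g(c')-f(c')|<\varepsilon/2+0+\varepsilon/2=\varepsilon,
\]
which is exactly the required oscillation bound.

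For the converse, assume that for every $\varepsilon>0$ such a cover $Y_1,\dots,Y_n\in\CB$ exists; I want to exhibit a $\CB$-simple function within $L_\infty$-distance $\varepsilon$ of $f$. First disjointify: set $Y_i':=Y_i\setminus\bigcup_{j<i}Y_j\in\CB$, so that the $Y_i'$ are pairwise disjoint, cover $V$, and each $Y_i'\subseteq Y_i$. For every nonempty $Y_i'$ pick a point $c_i\in Y_i'$ and let $r_i:=f(c_i)$; define $g:=\sum_{i=1}^{n} r_i \1_{Y_i'}$, which is $\CB$-simple. For any $x\in V$ there is a unique $i$ with $x\in Y_i'$, and since $x,c_i\in Y_i'\subseteq Y_i$ we get $|f(x)-g(x)|=|f(x)-f(c_i)|<\varepsilon$.

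I do not expect any substantive obstacle; the only mild subtlety is ensuring the $g$ produced in the forward direction is constant (not merely simple) on each piece of the cover, which is handled by the standard refinement of finitely many elements of $\CB$ into the partition generated by them, and ensuring strict inequality, which is handled by using $\varepsilon/2$ in the approximation step.
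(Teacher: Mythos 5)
Your proof is correct; the paper itself states this claim without proof (``The following claim is obvious''), and your argument --- refining a $\CB$-simple $\varepsilon/2$-approximant into a partition on which it is constant for one direction, and disjointifying the cover and sampling $f$ at one point per piece for the other --- is exactly the standard argument the authors have in mind. No gaps.
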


If $f$ is $\CB$-integrable and $\mu$ is a measure  on $\CB$ then the integral of
$f$ with respect to $\mu$ is defined as 
\[ \int_V f d\mu =\lim_{n\to \infty} \int_V g_n d\mu, \]
where $(g_n)_{n\in \NN}$ is a sequence of $\CB$-simple  functions convergent to $f$. It
is easy to see that this integral does not depend on the choice
of a convergent
sequence.  
Also for a $\CB$-integrable function $f$ and a set $A\in \CB$ we
define
 \[ \int_A f\,d\mu =  \int_V (\1_Af)\,d\mu. \]

\subsection{On $\varepsilon$-nets}
\label{sec:varepsilon-nets}

Let $V$ be a set, $\CB$ a field on $V$ and $\mu$ a measure on
$\CB$. Let $\CF$ be a family of subsets of $V$ with $\CF\subseteq
\CB$. As usual, for $\varepsilon>0$ we say that a subset $T\subseteq
V$ is \emph{an $\varepsilon$-net for $\CF$  with respect to $\mu$ }
if for every $F\in \CF$ we
have $\mu(F)\geq  \varepsilon \Longrightarrow F\cap T \neq \emptyset$.  

The following is a well-known consequence of the classical VC-theorem (see
\cite{vc,vc1} and also \cite{lovasz2010regularity}).

\begin{fact}\label{fact:vc-theorem} Let $V$ be a set, $\CB$ a field on $V$ and $\mu$ a  measure on
$\CB$ with a finite support (i.e. there exists a finite set $A\in \CB$ with $\mu(A)=1$).  If $\CF\subseteq \CB$ is a VC-family with
VC-dimension at most $d$ then for any $\varepsilon>0$ it admits an $\varepsilon$-net
$T$ with $|T|\leq 8d  \frac{1}{\varepsilon}\log{\frac{1}{\varepsilon}}$. 

\end{fact}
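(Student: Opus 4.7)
The plan is to reduce the claim to the classical (countably additive) $\varepsilon$-net theorem by constructing a finitely supported probability measure on $A$ that agrees with $\mu$ on every set in $\CF$.

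First I would note that, since $\mu(A)=1$ and $\mu$ is finitely additive, $\mu(V\setminus A)=0$, so $\mu(F)=\mu(F\cap A)$ for every $F\in\CF$, and it suffices to find $T\subseteq A$ with the required intersection property. By the Sauer--Shelah lemma (Fact~\ref{fac: SauerShelah}), the restricted family $\CF|_A := \{F\cap A : F\in\CF\}$ has size at most $O(|A|^d)$; in particular it is finite, so the Boolean subalgebra of $\CB$ it generates (inside the power set of $A$) has only finitely many atoms $\alpha_1,\dotsc,\alpha_m$, each lying in $\CB$ because each is a Boolean combination of sets in $\CF$ intersected with $A$. The crucial feature is that every $F\in\CF$ is a disjoint union of some of these atoms, so each $\alpha_i$ is either contained in $F$ or disjoint from $F$.

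Next I would pick a representative $x_i\in\alpha_i$ from each nonempty atom and define a (countably additive) probability measure $\nu$ on $2^A$ by $\nu(\{x_i\})=\mu(\alpha_i)$ and $\nu(\{y\})=0$ for all other $y\in A$. Atom-homogeneity then yields
\[ \nu(F)=\sum_{i\,:\,x_i\in F}\mu(\alpha_i)=\sum_{i\,:\,\alpha_i\subseteq F}\mu(\alpha_i)=\mu(F\cap A)=\mu(F) \]
for every $F\in\CF$. Applying the classical VC-theoretic $\varepsilon$-net theorem (see \cite{vc, vc1}) to the finite probability space $(A,2^A,\nu)$ and the family $\CF|_A$ (whose VC-dimension is still at most $d$) then produces a set $T\subseteq A$ of size at most $8d\,\tfrac{1}{\varepsilon}\log\tfrac{1}{\varepsilon}$ that meets every $F\in\CF$ with $\nu(F)\geq\varepsilon$, which by the identity $\nu(F)=\mu(F)$ on $\CF$ is exactly the desired $\varepsilon$-net for $\mu$.

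There is no deep obstacle here: the only mildly subtle point is the ``rationalization'' step of replacing the finitely additive $\mu$ by the finitely supported countably additive $\nu$, which goes through precisely because Sauer--Shelah forces finitely many atoms and each atom is $\CF$-homogeneous, letting us inherit the quantitative bound directly from the standard probabilistic proof of the classical $\varepsilon$-net theorem.
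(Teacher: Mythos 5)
The paper does not actually prove this statement---it is recorded as a ``Fact'' and attributed to the classical VC/$\varepsilon$-net theorem of \cite{vc, vc1} and \cite{lovasz2010regularity}---so there is no in-paper argument to compare against. Your reduction is correct and supplies exactly the missing derivation: the only genuine issue in passing from the finitely additive $\mu$ to a setting where the classical theorem applies is whether one can replace $\mu$ by a finitely supported, countably additive measure agreeing with $\mu$ on all of $\CF$, and your atom construction handles this cleanly (Sauer--Shelah makes $\CF|_A$ finite, the atoms lie in $\CB$ as Boolean combinations of sets $F\cap A$ with $A\in\CB$, and $\CF$-homogeneity of atoms gives $\nu(F)=\mu(F)$). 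The restriction $\CF|_A$ has VC-dimension at most $d$, so the quantitative bound is inherited verbatim from the classical theorem, as required.
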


\subsection{Finitely approximated measures}
\label{sec:qiant-free-keisl}
Throughout this section we let $V$ be a set and  $\CB_V$ a field on $V$.
 
\DIFaddbegin \begin{defn} Let  $\mu$ be a
  measure on $\CB_V$,   and  let $\mathcal{F} \subseteq \CB_V$ be a family of subsets of $V$.
We say that $\mu$ is \emph{finitely approximated} on the family  $\mathcal{F}$ if for
  every $\varepsilon >0$  there are $p_1,\dotsc,p_n\in V$ (possibly with repetitions) with
 \[ | \mu(F) - \Av(p_1,\dotsc,p_n; F)| < \varepsilon \text{ for every }
F\in \CF, \]
 where $\Av(p_1,\dotsc,p_n; F)=\frac{1}{n}\bigl|\{ i\in
[n] \colon  p_i\in F\}\bigr|$. We say that $p_1, \ldots, p_n$ is an \emph{$\varepsilon$-approximation} of $\mu$ on $\CF$.
\end{defn}

\textbf{Notation.} Let $\mathcal{F}$ be a family of subsets of $V$. For $m \in \mathbb{N}$, we let $\mathcal{F}^m$ be the family of all subsets of $V$ given by the Boolean combinations of at most $m$ sets from $\mathcal{F}$.

\DIFaddend \begin{defn} \label{def: fap on R} Let now $W$ be another set, and let $R\subseteq V\times W$ be a relation such that $R_b \in \CB_V$ for all $b \in W$. 
 \DIFaddbegin \begin{enumerate}
\item Let $\mathcal{R}_V := \{ R_b : b \in W \}$, then $\mathcal{R}_V \subseteq \CB_V$ by assumption.
\item  We say that a measure  $\mu$ on $\CB_V$ is \emph{finitely approximated on the relation $R$} if it is finitely approximated on each of the (infinitely many) families of sets $\mathcal{R}_{V}^{m}$, $m \in \mathbb{N}$.
\end{enumerate}

\DIFaddend \end{defn}

\begin{rem}
\begin{enumerate}
\item In particular, if $\mu$ is finitely approximated on the relation  $R$, then it is finitely approximated on the family of sets $\mathcal{R}_{V}^\Delta = \{ R_b \Delta R_{b'} : b,b' \in W \}$.
\item Note that $\mu$ being finitely approximated on $\mathcal{R}_V^\Delta$  does not imply that $\mu$ is finitely approximated on $\mathcal{R}_V$. For example, let $V = \mathbb{R}$, let $\mathcal{B}_V$ be the Boolean algebra generated by all intervals in $V$, and let $\mathcal{R}_V$ be the family of all intervals unbounded from above. Let $\mu$ be the $0-1$ measure on $\mathcal{B}_V$ such that the measure of a set is $1$ if and only if it is unbounded from above. Then all sets in $\mathcal{R}_V^\Delta$ have measure $0$, so we can take the empty set as an $\varepsilon$-approximation for $\mu$ on $\mathcal{R}_V^\Delta$, for any $\varepsilon > 0$.
But there are no finite $\varepsilon$-approximations for $\mu$ on $\mathcal{R}_V$, for any $\varepsilon<1$, as any finite set can be avoided by some unbounded interval of measure $1$.

Similarly, if $\mathcal{R}_V$ is the family of all intervals bounded from above, then $\mu$ is trivially finitely approximated on $\mathcal{R}_V$. However, it is not finitely approximated on the family of all complements of the sets in $\mathcal{R}_V$.
\end{enumerate}

See Example \ref{ex: generically stable measures} for many examples of finitely approximated measures.

\end{rem}

 \begin{claim}
\label{claim:fim-int}
Let $R \subseteq V \times W$ be as in Definition \ref{def: fap on R},  $\mu$ a measure on $\mathcal{B}_V$, and let $\CB_W$ be a Boolean algebra on  $W$ such that $R_a \in \mathcal{B}_W$ for all $a \in V$.
\DIFaddbegin 

Assume that $\mu$ is finitely approximated on the family  $\mathcal{R}_V^\Delta$.  Then for any set $A \in \CB_V$, the function
$$h_{R,A}: W \to \mathbb{R}, \text{ given by }  h_{R,A}(b)=\mu(R_b \cap A)$$

is 
$\CB_W$-integrable.
\end{claim}
\begin{proof}  Let $\varepsilon>0$.  By assumption we can choose   $p_1,\dotsc,p_n\in V$ such that 
$$ | \mu(R_b \Delta R_{b'}) - \Av(p_1,\dotsc,p_n; R_b \Delta R_{b'})| < \varepsilon$$ for every
$b, b'\in W$. 
For $I\subseteq [n]$ let $C_I\subseteq W$ be the set 
 \[ C_I=\{
b\in W \colon p_i\in R_b \Leftrightarrow i\in I \}. \]  Clearly each $C_I\in \mathcal{B}_W$,     the
sets $C_I, I \subseteq [n],$ cover $W$ and for every $I\subseteq [n]$
and $b,b'\in C_I$ we have $\mu(R_b \Delta R_{b'}) < \varepsilon$.  Hence, for any $b,b' \in C_I$ we have
$$|h_{R,A}(b) - h_{R,A}(b')| \leq \mu(A \cap (R_b \Delta R_{b'})) \leq \mu (R_b \Delta R_{b'}) < \varepsilon.$$

By Claim \ref{lem:def-sep} the function $h_{R,A}$ is $\CB_W$-integrable.
\end{proof}

\subsection{Products of finitely approximated measures}

In this section, we let  $V,W,Z$ be sets, $R \subseteq V \times W \times Z$ a relation, and let  $\CB_V, \CB_W$ be fields on $V,W$, respectively.
We assume that $\mathcal{R}_V = \{ R_{(b,c)} : (b,c) \in W \times Z \} \subseteq \CB_V$ and $\mathcal{R}_W = \{ R_{(a,c)} : (a,c) \in V \times Z \} \subseteq \CB_W$. Let $\mu$ be a measure
on $\CB_V$  and $\nu$ a  measure on $\CB_W$.

Under the above assumptions we will denote by 
$\CB_{V \times W}[R]$ the field on $V\times W$ generated by $\CB_{V} \otimes \CB_{W}$ and the sets $R_c , c \in Z$. 
Notice that  $\CB_{V \times W}[R]$ contains all $R$-definable subsets of $V \times W$.

Given measures $\mu$ and $\nu$ on $\mathcal{B}_V$ and $\mathcal{B}_W$, respectively,  in general there are
many different measures on $\mathcal{B}_{V\times W}[R]$ extending the product measure $\mu\times \nu$. In this section, in the case when at least one of $\mu$, $\nu$ is finitely approximated, we construct a certain canonical measure  extending    
$\mu\times \nu$.

Let, for example,  $\mu$ be a measure on $\mathcal{B}_V$ that is  finitely approximated on the relation $R\subseteq V\times(W\times Z)$.
By   Claim \ref{claim:fim-int}, if $E$ is an arbitrary $R$-definable subset of $V \times W$ and $A\in \mathcal{B}_V, B \in \mathcal{B}_W$, then the function $h_{E,A}\colon B \to \RR$, given by $h_{E,A}(b)=\mu(A\cap E_b)=\int_A \mathbf{1}_E(x,b) d \mu$, is $\CB_W$-integrable. Hence the double integral 
$$ \int_B \left( \int_A  \mathbf{1}_E(x,y) d \mu \right) d \nu$$
is well defined for any $A \in \mathcal{B}_V, B\in \mathcal{B}_W$.

Similarly, if $\nu$ is  a measure on $\mathcal{B}_U$ that is  finitely approximated on the relation $R$ viewed as a relation on $W\times(V\times Z)$. then for any $A \in \mathcal{B}_V, B\in \mathcal{B}_W$ the double integral 
$$ \int_A \left( \int_B  \mathbf{1}_E(x,y) d \nu \right) d \mu$$
is well defined as well.


\begin{prop}\label{prop:prod-fim}
\begin{enumerate}
\item  Let $\mu$ be a measure on $\mathcal{B}_V$ that is  finitely approximated on the relation $R\subseteq V\times(W\times Z)$.
  There is a unique  measure $\omega$ on  $\CB_{V\ttimes W}[R]$
  whose restriction to $\CB_{V} \otimes \CB_{W}$  is $\mu\ttimes \nu$ and such that $\omega (E \cap (A \times B)) =\int_B  \int_A  \mathbf{1}_E(x,y) d \mu  d \nu$ for every $R$-definable $E \subseteq V \times W$, $A \in \mathcal{B}_V, B \in \mathcal{B}_W$. 
  We denote this measure by $\mu\lltimes \nu$.
\item  If $\nu$ is a measure on $\mathcal{B}_W$ that is  finitely approximated on the relation $R\subseteq W\times(V\times Z)$, 
  then there  is a unique  measure, denoted by  $\mu\rtimes \nu$, on  $\CB_{V\ttimes W}[R]$
such that $\mu\rtimes \nu (E \cap (A \times B)) =\int_A  \int_B  \mathbf{1}_E(x,y) d \nu  d \mu$ for every $R$-definable $E \subseteq V \times W$, $A \in \mathcal{B}_V, B \in \mathcal{B}_W$. 
\item If $\mu$ and $\nu$ are measures on $\mathcal{B}_V$ and $\mathcal{B}_W$, respectively, both finitely approximated on the relation $R$, then $\mu\lltimes \nu$ is also finitely approximated on the relation $R \subseteq (V\times W) \times Z$ and $\mu\lltimes \nu (E) = \nu \lltimes \mu (E)$ for any $R$-definable $E\subseteq U\times V$. 
\end{enumerate}
\end{prop}
\begin{proof}

  (1) It is easy to see that every set $Y$ in $\CB_{V\ttimes W}[R]$ is a finite disjoint union of sets of the form $E_i \cap (A_i \times B_i)$ where $E_i\subseteq V \times W$ is $R$-definable  and $A_i \in \CB_{V}, B_i \in \CB_{W}$. We define
  $$\omega (Y) = \sum_i \int_{B_i}  \int_{A_i}  \mathbf{1}_{E_i}(x,y) d \mu  d \nu. $$
It is easy to check that $\omega$ is well-defined  and is a finitely additive probability measure on $\CB_{V \times W}[R]$ satisfying the requirements.
Uniqueness is straightforward from the definition of $\omega$.

(2) is identical to (1).

(3) Given $m \in \mathbb{N}$ and viewing $R$ as a binary relation on $(V \times W) \times Z$, we must show that $\mu \ltimes \nu$ is finitely approximated  on the family $\CR^m_{V\times W}$ (see Definition \ref{def: fap on R}). Notice that for any $R$-definable $E \in \CR_{V \times W}^m$ and $a \in V, b \in W$, we have $E_a \in \CR^m_W$ and $E_b \in \CR_V^m$ (for $R$ viewed as the corresponding binary relation).

Fix an arbitrary $\varepsilon>0$.
Let $p_{1},\ldots p_{n} \in V$ 
be such that $\mu \left(F \right)\approx^{\varepsilon}\mbox{Av}\left(p_{1},\ldots,p_{n};F\right)$
for all $F \in \CR^m_V$, and let $q_{1},\ldots,q_{m}\in W$ be such that $\nu \left(F'\right)\approx^{\varepsilon}\mbox{Av}\left(q_{1},\ldots,q_{m}; F'\right)$
for all $F' \in \CR^m_{W}$.

We claim that the set $\left\{ \left(p_{i},q_{j}\right):1\leq i<n,1\leq j<m\right\} $
gives a $2 \varepsilon$-approximation for $\mu\ltimes\nu$ on $\CR^m_{V \times W}$. Let $E \in \CR_{V \times W}^m$. Using linearity of integration, we
have 

\[
\mu \ltimes \nu \left(E \right)=\int_{W}\left(\int_{V}\mathbf{1}_{E}\left(v,w\right)d\mu\right)d\nu \approx^{\varepsilon} \]
\[\int_{W}\left(\frac{1}{n}\sum_{i=1}^{n}\mathbf{1}_{E_{w}}\left(p_{i}\right)\right)d\nu =
\frac{1}{n}\sum_{i=1}^{n}\left(\int_{W}\mathbf{1}_{E_{w}}\left(p_{i}\right)d\nu\right)=\]
\[\frac{1}{n}\sum_{i=1}^{n}\left(\int_{W}\mathbf{1}_{E_{p_i}}(w)d\nu \right)\approx^{\varepsilon}\frac{1}{n}\sum_{i=1}^{n}\left(\frac{1}{m}\sum_{j=1}^{m}\mathbf{1}_{E_{p_{i}}}\left(q_{j}\right)\right)=
\]

\[
=\frac{1}{nm}\sum_{1\leq i \leq n, 1 \leq j \leq m}\mathbf{1}_{E}\left(p_i, q_{j}\right)\mbox{,}
\]
so $\mu \ltimes \nu \left(E \right)\approx^{2\varepsilon}\mbox{Av}\left(\left\{ \left(p_{i},q_{j}\right):1 \leq i \leq n, 1 \leq j \leq m\right\} ;E \right)$.

The fact that $\mu\lltimes \nu (E) = \nu \lltimes \mu (E)$ follows since, by the above, for any $\varepsilon > 0$ we have

\[ \mu \ltimes \nu \left(E\right) \approx^{2 \varepsilon}
\frac{1}{n}\sum_{i=1}^{n}\left(\frac{1}{m}\sum_{j=1}^{m}\mathbf{1}_{E_{p_{i}}}\left(q_{j}\right)\right)=
\]
\[
\frac{1}{m}\sum_{j=1}^{m}\left(\frac{1}{n}\sum_{i=1}^{n}\mathbf{1}_{E_{q_j}}\left(p_{i}\right)\right)\approx^{\varepsilon}\frac{1}{m}\sum_{j=1}^{m}\left(\int_{V}\mathbf{1}_{E_{q_{j}}}\left(v\right)d\mu \right)=
\]

\[\int_{V}\left(\frac{1}{m}\sum_{j=1}^{m}\mathbf{1}_{E_{v}}\left(q_{j}\right)\right)d\mu \approx^{\varepsilon}
\int_{V}\left(\int_{W}\mathbf{1}_{E}\left(v,w\right)d\nu\right)d\mu= \]
\[\nu \ltimes \mu \left(E \right)\mbox{,}
\]

hence $\mu\lltimes \nu (E) \approx^{4 \varepsilon} \nu \lltimes \mu (E)$ for an arbitrary $\varepsilon > 0$.
\end{proof}

\begin{rem}  If $R\subseteq U\times V$ is a relation, then everything above can be applied to $R$ by viewing it  as $R\subseteq V\times U\times Z$, where $Z$ is any one-element set.
 \end{rem}

It is not hard to see that a product of finitely approximated measures satisfies the following weak Fubini property.

\begin{lem}\label{lem:fubini} Let $R \subseteq V \times U$.
Let $\mu$ be a measure
on $\CB_V$ which is finitely approximated on the relation $R \subseteq V \times U$,  and let $\nu$ be a   measure on $\CB_V$. For any $\varepsilon >0$,
if $\mu(R_a) < \varepsilon$ for all $a\in V$ then $(\mu \lltimes
\nu) (R) < \varepsilon$. 
\end{lem}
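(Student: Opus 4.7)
The plan is to invoke the Fubini-type identity from Proposition~\ref{prop:prod-fim}(1) and then bound the resulting integrand pointwise. Taking $A := V$, $B := W$ and $E := R$ in that proposition (with the parameter space there taken to be a singleton, so that $R$ itself is $R$-definable and belongs to the Boolean algebra $\CB_{V \times W}$), one obtains
\[
(\mu \lltimes \nu)(R) \;=\; \omega_{R}(V, W) \;=\; \int_{W} \mu(R_w)\, d\nu(w),
\]
where the integrand $h(w) := \mu(R_w)$ is $\CB_W$-integrable by Claim~\ref{claim:fim-int}, and by hypothesis satisfies $h(w) < \varepsilon$ for every $w \in W$.

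Next, given any $\delta > 0$, Claim~\ref{lem:def-sep} produces a finite $\CB_W$-partition $Y_1, \ldots, Y_n$ of $W$ on each of whose pieces $h$ oscillates by less than $\delta$. Picking representatives $w_j \in Y_j$ yields a simple function $g := \sum_j h(w_j) \mathbf{1}_{Y_j}$ with $|h - g| < \delta$ uniformly. The key observation is that the representative values $h(w_j)$ form a \emph{finite} subset of $[0, \varepsilon)$, so their maximum $M := \max_j h(w_j)$ is itself strictly below $\varepsilon$; and since $\nu(W) = 1$, the computation $\int_W g\, d\nu = \sum_j h(w_j) \nu(Y_j) \leq M$ gives
\[
(\mu \lltimes \nu)(R) \;=\; \int_W h\, d\nu \;\leq\; M + \delta.
\]

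The main subtlety is that $M$ depends on the chosen partition and hence on $\delta$, so one cannot simply send $\delta \to 0$. To pass from $\leq$ to the strict inequality $< \varepsilon$, I would fix an auxiliary $\delta_0 > 0$ first, extract the corresponding maximum $M_0 < \varepsilon$, and then rerun the argument with $\delta := \tfrac{1}{3}(\varepsilon - M_0)$ together with a $\delta$-partition refining the $\delta_0$-one: the oscillation bound on each old piece forces every new representative value to lie within $\delta_0$ of some old one, so the new maximum satisfies $M_\delta \leq M_0 + \delta_0$, and by starting with $\delta_0$ small enough (ensuring $\delta_0 + \delta < \varepsilon - M_0$, arranged via one bootstrap step) one obtains $M_\delta + \delta < \varepsilon$. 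This bootstrapping of the $\delta_0$--$M_0$ loop is the one step requiring genuine care; everything else is a direct application of the definitions and Proposition~\ref{prop:prod-fim}.
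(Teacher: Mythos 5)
Your reduction to the Fubini identity $(\mu\lltimes\nu)(R)=\int_W \mu(R_w)\,d\nu(w)$ is exactly the intended route (the paper offers no written proof, only ``it is not hard to see''), and the first half of your argument is correct: for every $\delta>0$ one gets a simple function $g=\sum_j h(w_j)\1_{Y_j}$ with $\int_W g\,d\nu=\sum_j h(w_j)\nu(Y_j)\le\max_j h(w_j)<\varepsilon$ and $|\int_W h\,d\nu-\int_W g\,d\nu|\le\delta$, hence $(\mu\lltimes\nu)(R)\le\varepsilon$. The gap is in the bootstrap for the \emph{strict} inequality. The requirement ``start with $\delta_0$ small enough that $\delta_0+\delta<\varepsilon-M_0$'' is circular: $M_0$ is only determined after $\delta_0$ is chosen, and nothing prevents $M_0\ge\varepsilon-\delta_0$ for every $\delta_0$ (if $\sup_w h(w)=\varepsilon$, then any piece of positive measure on which $h$ comes close to its supremum forces this). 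Refining the partition only gives $M_\delta\le M_0+\delta_0$, which does not push the bound below $\varepsilon$.

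In fact no argument can close this gap, because the strict inequality is false as stated. Take $V=[0,1]$ with $\mu$ the Lebesgue measure on the field generated by intervals, $W=\NN$ with $\CB_W=2^{\NN}$, and $R_n=[0,\varepsilon-\tfrac1n]$; this family is a chain, so $\mu$ is fap on $R$, and $\mu(R_n)<\varepsilon$ for all $n$. Letting $\nu$ be the $0$--$1$ measure associated with a non-principal ultrafilter on $\NN$, the function $h(n)=\varepsilon-\tfrac1n$ is $\CB_W$-integrable and $\int_W h\,d\nu=\lim_{\mathcal U}h(n)=\varepsilon$, so $(\mu\lltimes\nu)(R)=\varepsilon$ exactly. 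The correct conclusion of the lemma is therefore $(\mu\lltimes\nu)(R)\le\varepsilon$; this weaker form suffices for every use of the lemma in the paper (e.g.\ in Theorem \ref{thm:main} one simply invokes it with a slightly smaller $\varepsilon$). So: state and prove the non-strict bound, and drop the bootstrap.
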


We extend products of finitely approximated measures to an arbitrary number of sets.

\begin{defn}\label{def: compatible system}
Given $k \in \mathbb{N}$, we say that $(R; V_i, \CB_i, \mu_i : 1\leq i \leq k)$ is a \emph{compatible finitely approximated system} if for all $i$ we have:
\begin{enumerate}
\item $R \subseteq V_1 \times \ldots V_k$,
\item $\CB_i$ is a field  on $V_i$,
\item $\mu_i$ is a measure on $\CB_i$,
\item $R_a \in \CB_i$ for each $a \in V_{i^c}$,
\item $\mu_i$ is finitely approximated on the relation $R$ (viewed as a binary relation on $V_i \times V_{i^\co}$).
 \end{enumerate}

\end{defn}

\begin{defn} \label{def: iterated product measure}
  Assume that $(R; V_i, \CB_i, \mu_i : 1\leq i \leq k)$ is a compatible finitely approximated system. By induction on $2\leq n\leq k$  we define  the  field  $\mathcal{B}_{1} \times \ldots \times \mathcal{B}_{n}$
  on $V_1\times \ldots \times V_n$ and the measure $\mu_1\llltimes \mu_n$ on  $\mathcal{B}_{1} \times \ldots \times \mathcal{B}_{n}$ as follows.

  On the induction step $n+1$, we set $V=V_1\times \ldots \times V_n$, $W=V_{n+1}$, $Z=V_{n+2}\times \ldots \times V_k$.  Viewing $R$ as $R\subseteq  V \times W \times Z$,  we set
 $\mathcal{B}_{1} \times \ldots \times \mathcal{B}_{n+1}= \mathcal{B}_{U\times V}[R]$, and 
    $\mu_1\llltimes \mu_{n+1} := (\mu_1\llltimes
    \mu_{n})\lltimes \mu_{n+1}$.

  Note that in particular $R \in \mathcal{B}_{1} \times \ldots \times \mathcal{B}_{k}$ and $E \in \mathcal{B}_{1} \times \ldots \times \mathcal{B}_{k}$ for every $R_{\otimes}$-definable $E \subseteq V_1 \times \ldots \times V_k$.
\end{defn}

\subsection{Approximations by rectangular sets}
\label{sec:appox-rect-sets}

\begin{prop}\label{prop:delta-approx}  Let $V,W$ be sets, $R \subseteq V \times W$ a subset, 
$\mu$ a  measure on $V$ which is finitely approximated on the relation $R$. Then for any $\varepsilon>0$ there are $R$-definable
  subsets $X_1,\dotsc, X_m \subseteq W$ partitioning $W$ such that
  for every  $i\in [m]$ and  any  $a,a'\in X_i$ we have 
$\mu(R_a\Delta R_{a'})  < \varepsilon$. 

In addition, if the family $\CR=\{R_a \colon  a\in W\}$ has VC-dimension at most $d$ then we can choose $D\subseteq V$ of size at
most $ 320d\bigl(\tfrac{1}{\varepsilon})^2$
such that every $X_i$ is an atom in the Boolean algebra of sets
$R$-definable over $D$, and $m \leq C_d(320d)^d \left( \frac{1}{\varepsilon} \right)^{2d}$ for some constant $C_d$ depending only on $d$.
\end{prop}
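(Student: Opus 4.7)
The plan is to exploit the fap property on the family of symmetric differences $\CR^\Delta = \{R_a \Delta R_{a'} : a,a' \in W\}$ and cut $W$ into fibers determined by the trace of each $R_a$ on a small set $D \subseteq V$. This is the same maneuver already used in the proof of Claim~\ref{claim:fim-int}.

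For the first (unquantified) assertion, the fap assumption (applied to $\CR^\Delta$, which consists of Boolean combinations of two fibers of $R$) supplies $p_1, \ldots, p_n \in V$ with
\[
|\mu(R_a \Delta R_{a'}) - \Av(p_1, \ldots, p_n ; R_a \Delta R_{a'})| < \varepsilon \quad \text{for all } a,a' \in W.
\]
Set $D = \{p_1, \ldots, p_n\}$, and for each $I \subseteq [n]$ let $X_I = \{a \in W : \forall i\, (p_i \in R_a \Leftrightarrow i \in I)\}$. Each $X_I$ is $R$-definable over $D$, the nonempty $X_I$'s partition $W$, and whenever $a,a' \in X_I$ we have $D \cap (R_a \Delta R_{a'}) = \emptyset$, so $\Av(p_1, \ldots, p_n; R_a \Delta R_{a'}) = 0$ and hence $\mu(R_a \Delta R_{a'}) < \varepsilon$.

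For the quantitative refinement we must arrange $|D| \leq 320 d (1/\varepsilon)^2$. First observe that if $\CR$ has VC-dimension at most $d$ then $\CR^\Delta$ also has VC-dimension $O(d)$ (a standard estimate for Boolean combinations of a VC class; the resulting constants get absorbed into the bound). The natural tool is the VC theorem on $\varepsilon$-\emph{approximations} applied to $\CR^\Delta$. Since $\mu$ is only finitely additive, we cannot sample from it directly; instead we bootstrap through fap. First, use fap to pick a (possibly huge) finite set $P \subseteq V$ which is an $\varepsilon/2$-approximation for $\mu$ on $\CR^\Delta$. Regard $P$ as a finite probability space with counting measure $\nu_P$, and invoke the finite (purely combinatorial) VC $\varepsilon$-approximation theorem to extract $D \subseteq P$ of size at most $320 d (1/\varepsilon)^2$ satisfying $|\nu_P(F) - \Av(D;F)| < \varepsilon/2$ for every $F \in \CR^\Delta$. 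The triangle inequality then promotes $D$ to an $\varepsilon$-approximation of $\mu$ on $\CR^\Delta$, and the partition argument from the first part, applied to $D$ in place of $\{p_1, \ldots, p_n\}$, produces the desired $R$-definable partition $X_1, \ldots, X_m$ over $D$.

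The main technical point is precisely this transfer between the finitely additive measure $\mu$ (where ordinary random sampling and the usual formulation of the VC theorem are not directly available) and the finite combinatorial VC approximation theorem; the fap assumption is exactly what mediates this transfer by providing a finite intermediate sample $P$. Everything else is bookkeeping with the symmetric difference family and the partition-by-traces.
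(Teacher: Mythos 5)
Your proof is correct in structure and follows the same overall strategy as the paper: use fap on $\CR^\Delta$ to get a finite sample, shrink it with VC machinery, and partition $W$ by the traces $R_a\cap D$. The first part is verbatim the paper's argument. The one place you diverge is the quantitative step: the paper does \emph{not} use the $\varepsilon$-approximation theorem but the $\varepsilon$-\emph{net} theorem (Fact \ref{fact:vc-theorem}), applied to the counting measure $\omega=\Av(p_1,\dotsc,p_n;\cdot)$ on the fap sample. A net suffices because all one needs is the contrapositive: for $a,a'$ in the same trace-class, $(R_a\Delta R_{a'})\cap D=\emptyset$, hence $\omega(R_a\Delta R_{a'})<\varepsilon/2$, hence $\mu(R_a\Delta R_{a'})<\varepsilon$. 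This is exactly where the constant in the statement comes from: $8\cdot 10d\cdot\tfrac{2}{\varepsilon}\log\tfrac{2}{\varepsilon}\leq 320d\bigl(\tfrac{1}{\varepsilon}\bigr)^2$. Your $\varepsilon$-approximation route proves the same qualitative statement, but with the classical VC uniform-convergence bound it yields $|D|=O\bigl(d\bigl(\tfrac{1}{\varepsilon}\bigr)^2\log\tfrac{1}{\varepsilon}\bigr)$, which overshoots $320d\bigl(\tfrac{1}{\varepsilon}\bigr)^2$ for small $\varepsilon$; to recover a clean $O\bigl(d(\tfrac{1}{\varepsilon})^2\bigr)$ you would need the sharper (Talagrand-type) approximation bound, and even then the constant $320$ is not obviously matched. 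Since approximations are strictly stronger than nets and strictly more expensive, the net is the right tool here; otherwise your argument, including the bootstrap through the finite intermediate sample $P$, is exactly what the paper does.
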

\begin{proof}  Let $\CR^\Delta=\{ R_a\Delta R_{a'} \colon a,a'\in
  W\}$. Since $\mu$ is finitely approximated on $R$, there are $p_1,\dotsc p_n\in V$ with
$|\mu(F)-\Av(p_1,\dotsc,p_n;F)|< \varepsilon$ for any $F\in
\CR^\Delta$.

For each $I\subseteq [n]$ let $X_I=\{ a\in W \colon p_i\in R_a
\Leftrightarrow i\in I \}$. It is easy to see that the sets $X_I,
I\subseteq [n]$ partition $W$, every $X_I$ is
$R$-definable  and for every $I\subseteq [n]$
and $a,a'\in X_I$ we have $\mu(R_a\Delta R_a')< \varepsilon$.

\medskip

Assume in addition that $\CR$ is a VC-family with VC-dimension at most
$d$.  As above we choose 
$p_1,\dotsc p_n\in V$ with
$$|\mu(F)-\Av(p_1,\dotsc,p_n;F)|< \varepsilon/2$$ for any $F\in
\CR^\Delta$.

Let $\omega$ be a  measure on $\CB_V$ given by  $\omega(X)=\Av(p_1,\dotsc,p_n;
X)$. Since $\CR$ has VC-dimension at
  most $d$,  the  family  $\CR^\Delta$  had dimension at most $10d$ (see
  \cite[Lemma 4.5]{lovasz2010regularity}), and by Fact \ref{fact:vc-theorem} we can
  choose an $\varepsilon/2$-net $D$ for $\CR^\Delta$ and $\omega$ with  
$|D| \leq 80d\frac{2}{\varepsilon}\log\frac{2}{\varepsilon}$. Clearly
\[  80d\tfrac{2}{\varepsilon}\log\tfrac{2}{\varepsilon} \leq
80d\bigl(\tfrac{2}{\varepsilon}\bigr)^2=320d\bigl(\tfrac{1}{\varepsilon})^2. \]

For each $I\subseteq D$ let $X_I=\{ a\in W \colon  R_a\cap D = I  \}$. It is easy to see that the sets $X_I,
I\subseteq D$, partition $W$ and  every $X_I$ is an atom in the Boolean algebra of all sets
$R$-definable over $D$. Let $I \subseteq D$ and $a,a'\in X_I$.  Then
$R_a\cap  D=R_{a'}\cap D$, hence $\omega(R_a\Delta R_{a'})\leq
\varepsilon/2$, and $\mu(R_a\Delta R_{a'}) < \varepsilon$.  Finally, by Fact \ref{fact:s-sh}, the number of different atoms $X_I$'s is at most $C_d |D|^d \leq	 C_d (320d)^d \left( \frac{1}{\varepsilon} \right)^{2d}$.
\end{proof}

\begin{thm}\label{thm:main} 
Let $(R; V_i, \CB_i, \mu_i : 1\leq i \leq k)$ be a compatible finitely approximated system. Then for every $\varepsilon>0$ there is 
an $R_\ootimes$-definable $A\subseteq  V_1\tttimes V_k$  with 
$$(\mu_1\llltimes \mu_k)(R\Delta A)< \varepsilon.$$

In addition, if $R$ has VC-dimension at most $d$ (see Definition \ref{def:nip}) then we can
choose $A$ to be $R_\ootimes$-definable 
over some $\vec D$ with $\|\vec D\|\leq C_{k,d}\bigl(\frac{1}{\varepsilon}\bigr)^{2(k-1)d}$,  where
 $C_{k,d}$ is a constant that  depends on $k$ and $d$ only.   
\end{thm}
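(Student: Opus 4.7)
The plan is to proceed by induction on $k$. The base case $k = 1$ is immediate: take $A := R$, which is itself $R$-definable (as a degenerate ``box''). For the inductive step, I would view $R$ as a binary relation between $V := V_{[k-1]} = V_1 \tttimes V_{k-1}$ and $V_k$. By iterating Proposition~\ref{prop:prod-fim}(2), the product $\mu_V := \mu_1 \lltimes \dotsb \lltimes \mu_{k-1}$ on $V$ is fap on $R$ in this binary sense. I would then apply Proposition~\ref{prop:delta-approx} with error $\varepsilon/3$ to obtain an $R$-definable partition $Y_1, \dotsc, Y_m$ of $V_k$ such that $\mu_V(R_c \Delta R_{c'}) < \varepsilon/3$ whenever $c, c' \in Y_j$. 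In the VC-dimension case, the $Y_j$ are $R$-definable over some $D_k \subseteq V$ with $|D_k| \leq 320 d(3/\varepsilon)^2$, and Sauer--Shelah (Fact~\ref{fact:s-sh}) yields $m \leq C_d |D_k|^d = O((1/\varepsilon)^{2d})$.

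For each $j$, I would choose a representative $c_j \in Y_j$ and apply the induction hypothesis, with error $\varepsilon/3$, to the fiber $R_{c_j} \subseteq V_{[k-1]}$ viewed as a $(k-1)$-ary relation. The key point is that a fiber of $R_{c_j}$ in the $(k-1)$-ary sense is just a fiber of $R$ with the $V_k$-coordinate pinned to $c_j$; hence both the fap property of each $\mu_i$ for $i < k$, and the VC-dimension bound $d$, are inherited by $R_{c_j}$. Induction then yields an $(R_{c_j})_\ootimes$-definable approximation $B_j \subseteq V_{[k-1]}$ with $\mu_V(R_{c_j} \Delta B_j) < \varepsilon/3$, whose side-parameter sets have size at most $C_{k-1,d}(3/\varepsilon)^{2(k-2)d}$. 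I would then set $A := \bigcup_j (B_j \ttimes Y_j)$. Since every $R_{c_j}$-definable subset of $V_i$ becomes $R$-definable after augmenting its defining parameters by $c_j$, the resulting $A$ is $R_\ootimes$-definable over some $\vec D$ with $\|\vec D\| \leq m \cdot C_{k-1,d}(3/\varepsilon)^{2(k-2)d} = O((1/\varepsilon)^{2(k-1)d})$, absorbing all dependencies into a single constant $C_{k,d}$.

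The error estimate then follows from a Fubini-style computation: for any $c \in Y_j$,
\[
\mu_V((R \Delta A)_c) = \mu_V(R_c \Delta B_j) \leq \mu_V(R_c \Delta R_{c_j}) + \mu_V(R_{c_j} \Delta B_j) < \tfrac{2\varepsilon}{3},
\]
and integrating against $\mu_k$ via the defining formula for $\mu_V \lltimes \mu_k$ from Proposition~\ref{prop:prod-fim} (or invoking Lemma~\ref{lem:fubini}) gives $(\mu_1 \llltimes \mu_k)(R \Delta A) < \varepsilon$.

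I expect the main obstacle to lie in the VC-dimension bookkeeping: one must verify that the parameter-set sizes compose polynomially, rather than exponentially, across the induction. The crucial ingredient is Sauer--Shelah, which bounds the number $m$ of partition pieces polynomially in $1/\varepsilon$ (instead of the trivial $2^{|D_k|}$), so the recursion $\|\vec D^{(k)}\| \leq m \cdot \|\vec D^{(k-1)}\|$ only multiplies polynomials of controlled degree and the final bound $C_{k,d}(1/\varepsilon)^{2(k-1)d}$ comes out correctly.
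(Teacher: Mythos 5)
Your proposal is correct and follows essentially the same route as the paper: induction on $k$ via Proposition~\ref{prop:delta-approx} applied to $R$ viewed as a binary relation on $V_{[k-1]}\times V_k$, approximation of each fiber class by a representative fiber, recursion into the fibers $R_{c_j}$, and the Sauer--Shelah bound to keep the parameter-set sizes polynomial. The only cosmetic difference is that you merge the paper's intermediate approximation $A'=\bigcup_j R_{c_j}\times Y_j$ into a single triangle-inequality estimate on fibers before integrating, which changes nothing of substance.
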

\begin{proof}
  We proceed by induction on $k$. 

\noindent\textbf{The case $k=2$.}
Let $V_1,V_2$ and $R\subseteq V_1\ttimes V_2$ be given. 
Using Proposition \ref{prop:delta-approx} we can find $R$-definable
sets $X_1,\dotsc X_m$ partitioning  $V_2$ such that for every $i\in [m]$ and any
$a,a'\in X_i$ we have $\mu_1(R_a\Delta R_{a'})< \varepsilon$.

 For each $i\in [m]$ we pick $a_i\in X_i$ and let $A=\bigcup _{i\in
   [m]}  R_{a_i}\times X_i$. 
Obviously $A$ is $R_\ootimes$-definable. It is not hard to see that for every $a\in V_2$ we have $\mu_1(R_a
\Delta A_a)< \varepsilon$, hence, by Lemma \ref{lem:fubini},
$(\mu_1\lltimes \nu_2)(R\Delta A) < \varepsilon$.

Assume in addition that $R$ has VC-dimension at most $d$. Then
by Proposition \ref{prop:delta-approx}, we can assume that for some $D_2 \subseteq
V_1$ with $|D_2|\leq 320d\bigl(\tfrac{1}{\varepsilon})^2$
 every
$X_i$ is an $R$-definable atom over $D_2$ and $m \leq C_d(320d)^d(\frac{1}{\varepsilon})^{2d}$. 
Let $D_1=\{a_1,\dotsc, a_m\}$, and $\vec D=(D_1,D_2)$. Obviously $A$ is
$R_\ootimes$-definable over $\vec D$ and we can take
$C_{2,d}=C_d(320d)^d$. 

\medskip
\noindent{\bf Inductive step $k+1$.} 
Let $V_1,\dots,V_{k+1}$ and $R\subseteq V_1\tttimes V_{k+1}$ be given. 

Viewing $V_1\tttimes V_{k+1}$ as $V_{[k]}\ttimes V_{k+1}$ and using the
case of $k=2$ we  obtain $R$-definable
$X_1,\dotsc X_m$ partitioning  $V_{k+1}$ and points $a_i\in X_i$, $i\in
[m]$, such that 
for the set $A'=\bigcup _{i\in
   [m]}  R_{a_i}\times X_i$ we have  $(\mu_1\llltimes
 \mu_{k+1})(R\Delta A')< \varepsilon/2$. 

For each $i\in [m]$ let $R^i=R_{a_i}$. It is an $R$-definable subset of $V_1\tttimes
V_k$. Applying induction hypothesis to each $R^i$ we obtain
$R^i_\ootimes$-definable sets $A_i\subseteq V_1\tttimes V_k$ such that 
$(\mu_1\llltimes \mu_k)(R^i \Delta A_i)< \varepsilon/2$.  Let $A= \bigcup _{i\in
   [m]}  A_i\times X_i$. It is an $R_\ootimes$-definable set and using Proposition \ref{prop:prod-fim} and Lemma \ref{lem:fubini}, it is not hard
 to see that $$(\mu_1\llltimes \mu_{k+1})(A'\Delta A)< \varepsilon/2,$$ hence
 $(\mu_1\llltimes \mu_{k+1})(R\Delta A)< \varepsilon$, as required.

Assume in addition that $R$ has VC-dimension at most $d$. As
in the case $k=2$ we can assume that every $X_i$ is $R$-definable over $D_{k+1}\subseteq V_1 \times \dotsc \times V_k$ with 
$|D_{k+1}| \leq 320d(\frac{2}{\varepsilon})^2$ and also assume that 
\[m\leq C_d|D_{k+1}|^d \leq C_d \Bigl[
320d\bigl(\tfrac{2}{\varepsilon}\bigr)^2\Bigr]^d =C_d(1280d)^d \bigl(\tfrac{1}{\varepsilon}\bigr)^{2d}.
\]

It is easy to see that each $R^i$ has VC-dimension at
most $d$.  Applying induction hypotheses we can assume  that each  $A_i$ 
above is  $R^i_\ootimes$-definable over $\vec D^i =(D^i_1,\dotsc
D^i_k)$ with $\|\vec D^i\| \leq C_{k,d}
(\tfrac{2}{\varepsilon})^{2(k-1)d}$, where $D^i_j \subseteq \prod_{l\in
  [k]\setminus \{j\}} V_l$.  

For each $i\in [m]$ and $j\in [k]$ let $\bar D^i_j=\{  (c,a_i) \colon
c\in D^i_j \}$,  $D_j=\bigcup_{i\in [m]}  \bar D^i_j$, and $\vec
D=(D_1,\dotsc,D_{k+1})$. 

It is not hard to see that $A$ above is $R_{\otimes}$-definable over $\vec D$ and 
\[ \| \vec D\| \leq
 m  C_{k,d}
\bigl(\tfrac{2}{\varepsilon}\bigr)^{2(k-1)d}
\leq   
C_d(1280d)^d \bigl(\tfrac{1}{\varepsilon}\bigr)^{2d} C_{k,d} 2^{2(k-1)d}
\bigl(\tfrac{1}{\varepsilon}\bigr)^{2(k-1)d} =\]
\[
=C_{k+1,d} (\tfrac{1}{\varepsilon})^{2kd}.
\]
\end{proof}

\section{Definable regularity lemma for hypergraphs of bounded VC dimension}
\label{sec:appl-hypergr}

In this section we apply the product measure decomposition results from Section \ref{sec:setting} to regularity of definable hypergraphs. Our goal is to prove a stronger version of Fact \ref{lem:sz-reg-graph} for hypergraphs of bounded VC-dimension.

\subsection{Regularity Lemmas for Hypergraphs}
\label{sec:regul-lemm-hypergr}

In this paper a \emph{$k$-hypergraph} $G=(V_1,\dotsc,V_k; R)$ consists of  sets $V_1,\dotsc,V_k$ and a subset
$R\subseteq V_1\tttimes V_k$.  We do not assume that the sets $V_i,
i\in[k]$ are pairwise distinct. 

\emph{A $k$-uniform hypergraph} $G=(V;R)$ is a set $V$ with a symmetric subset 
 $R\subseteq V^k$.  Of course, every $k$-uniform hypergraph $G=(V;R)$ can be also viewed as a $k$-hypergraph $(V,\dotsc,V; R)$ that we will denote by $\tilde G$. 

For a $k$-hypergraph $G=(V_1,\dotsc,V_k; R)$  and $A_1 \subseteq
V_1,\dotsc,A_k \subseteq V_k$, we let $R(A_1,\dotsc,A_k) := R \cap A_1\tttimes A_k$.

Let $G=(V_1,\dotsc,V_k;R)$ be a $k$-hypergraph. By \emph{a rectangular
  partition of $G$} we mean a $k$-tuple $\vec
\CP=(\CP_1,\dotsc,\CP_k)$ where each $\CP_i$ is a finite partition of
$V_i$.   For a rectangular partition $\vec \CP=(\CP_1,\dotsc,\CP_k)$
we define $\| \vec \CP \|=\max\{ |\CP_i |\colon i\in [k]\}$, and for a
set $X\subseteq V_1\tttimes  V_k$ we write $X\in \vec\CP$ if
$X=X_1\tttimes X_k$ for some $X_i\in \CP_i, i\in [k]$.   We will also
write $\Sigma \subseteq \vec \CP$ to indicate that $\Sigma$ consists of subsets
$X\subseteq V_1\tttimes V_k$ with $X\in \vec \CP$.

We say that $\vec \CP$ is \emph{$R$-definable} if each
$\CP_i$ consists of $R$-definable sets.  For a tuple $\vec
D=(D_1,\dotsc, D_k)$ as in Definition \ref{defn:const} we say that
$\vec \CP$ is $R$-definable over $\vec D$ if for each $i\in [k]$
every $X\in \CP_i$ is $R$-definable over $D_i$. 

A $k$-hypergraph $G=(V_1,\dotsc,V_k;R)$ has \emph{VC-dimension at most $d$} if $R$ has VC-dimension 
 at most $d$ in the sense of Definition \ref{def:nip}.
  A $k$-uniform hypergraph  $G=(V;R)$ has 
  VC-dimension at most $d$ if the corresponding $k$-hypergraph $\tilde
  G$ is  NIP with
  VC-dimension at most $d$.

\DIFaddend \begin{defn} \label{defn: e-regular partion with dens 0-1} Let $(R; V_i, \CB_i, \mu_i : 1\leq i \leq k)$ be a compatible finitely approximated  system. 
Let $\mu :=\mu_1\llltimes \mu_k$.
Given $\varepsilon>0$, we say that an $R$-definable  rectangular partition
$\vec \CP$ of $V_1\tttimes V_k$  is \emph{$\varepsilon$-regular with $0\mh1$-densities} if 
there is $\Sigma \subseteq \vec \CP$ such that 
\[ \sum_{X\in \Sigma} \mu(X) \leq \varepsilon, \]   
and for every $X_1\tttimes X_k\in \vec \CP \setminus \Sigma$ either
\[  \mu(X_1\tttimes X_k)-\mu( R(X_1,\dotsc,X_k)) < \varepsilon
\mu(X_1 \times\dotsc \times X_k)\]
or 
\[  \mu( R(X_1,\dotsc,X_k)) < \varepsilon
\mu(X_1 \times \dotsc \times X_k).\]
\end{defn}

\begin{rem}\label{rem: 01dens for free}
Note that in Fact \ref{lem:sz-reg-graph} the condition on the density of the edges is stated not just for the sets of the form $V_i \times V_j$ with $V_i,V_j$ from the partition, but also for arbitrary sets of the form $A \times B$ with $A \subseteq V_i, B \subseteq V_j$.
However, in the case of regular partitions with $0$-$1$-densities this strengthening follows for free: if, as in Definition \ref{defn: e-regular partion with dens 0-1},  we have $|\mu(X_1\tttimes X_k)- \delta \mu( R(X_1,\dotsc,X_k)) | < \varepsilon
\mu(X_1 \times\dotsc \times X_k)$ for some $\delta \in \{0,1\}$, then for arbitrary sets $Y_i \subseteq X_i$ with $Y_i \in \CB_i$ for $i \in [k]$ we also have $|\mu(Y_1\tttimes Y_k)- \delta \mu( R(Y_1,\dotsc,Y_k)) | < \varepsilon
\mu(X_1 \times\dotsc \times X_k)$.
\end{rem}

The next theorem demonstrates how existence of an approximation by rectangular sets for the product measure proved in Section \ref{sec:setting} can be used to obtain a regular partition.

\begin{thm}
  \label{thm:main1}
Let $(R; V_i, \CB_i, \mu_i : 1\leq i \leq k)$ be a compatible finitely approximated system and  $\mu=\mu_1\llltimes \mu_k$.
Then for any $\varepsilon>0$ there is an $R$-definable
$\varepsilon$-regular partition $\vec\CP$ with $0\mh1$-densities.

In addition, if  $R$ is NIP with VC dimension at most $d$ we can
choose $\vec\CP$ with $\| \vec\CP\| \leq
C_d(C_{k,d})^d\bigl(\tfrac{1}{\varepsilon}\bigr)^{4(k-1)d^2} $, where
$C_d$ and $C_{k,d}$ are the constants from Fact \ref{fact:s-sh} and Theorem \ref{thm:main}.
\end{thm}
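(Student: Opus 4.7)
The strategy is to reduce to Proposition \ref{prop:set-part} by producing, via Theorem \ref{thm:main}, an $E_\ootimes$-definable set $A$ that approximates $E$ tightly, and then canonically refining the ``projections'' of $A$ to a genuine rectangular partition. First, I would apply Theorem \ref{thm:main} with parameter $\varepsilon^2$ in place of $\varepsilon$ to obtain an $E_\ootimes$-definable set $A \subseteq V_1 \tttimes V_k$ with $\mu(A \Delta E) < \varepsilon^2$. By definition, $A$ can be written as a finite union $A = \bigcup_{j \leq m} X^j_1 \tttimes X^j_k$ where each $X^j_i \subseteq V_i$ is an $E$-definable subset.

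Next, for each $i \in [k]$, let $\mathcal{A}_i$ be the finite Boolean algebra of subsets of $V_i$ generated by $\{X^j_i : j \leq m\}$ (all elements of $\mathcal{A}_i$ are again $E$-definable, since $E$-definability is closed under Boolean operations), and let $\CP_i$ be the partition of $V_i$ into the atoms of $\mathcal{A}_i$. Then $\vec\CP = (\CP_1, \dotsc, \CP_k)$ is an $E$-definable rectangular partition of $V_1 \tttimes V_k$. Since every $X^j_i$ is a union of atoms of $\mathcal{A}_i$, every box $X^j_1 \tttimes X^j_k$ is a union of boxes $Y_1 \tttimes Y_k$ with $Y_i \in \CP_i$, so $A$ itself is compatible with $\vec\CP$. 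Now Proposition \ref{prop:set-part}, applied with this $\vec\CP$ and this $A$ (and using $\mu(A \Delta E) < \varepsilon^2$), immediately gives that $\vec\CP$ is $\varepsilon$-regular with $0\mh1$-densities, establishing the first assertion.

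For the bound in the NIP case, I would use the addendum of Theorem \ref{thm:main}: when $E$ has VC-dimension at most $d$, we can additionally ensure that $A$ is $E_\ootimes$-definable over some $\vec D = (D_1, \dotsc, D_k)$ with $\|\vec D\|$ polynomial in $1/\varepsilon$ (with the exponent coming from the $\varepsilon^2$ substitution into the bound of Theorem \ref{thm:main}). In particular, every set in each $\mathcal{A}_i$ is $E$-definable over $D_i$. By Fact \ref{fact:s-sh}, the total number of $E$-definable subsets of $V_i$ over $D_i$ is at most $C_d |D_i|^d$, and since the atoms of $\mathcal{A}_i$ are all distinct $E$-definable sets over $D_i$, we get $|\CP_i| \leq C_d |D_i|^d \leq C_d\|\vec D\|^d$. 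Combining this with the bound on $\|\vec D\|$ from Theorem \ref{thm:main} and simplifying yields a bound of the claimed polynomial form $C_d (C_{k,d})^d (1/\varepsilon)^{2(k-1)d^2}$ on $\|\vec\CP\|$.

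The main subtle point is that one must pass from a union of boxes (which $A$ is) to a true rectangular partition, and in the general fap case this could blow up the number of pieces to be exponential in the number of boxes. The crucial input that saves us is exactly the Sauer--Shelah lemma (Fact \ref{fact:s-sh}): for an $E$ of bounded VC-dimension, the Boolean algebra generated by $E$-definable sets over a small parameter set is itself polynomial in size, so its atoms form a partition whose cardinality we can control. Apart from this accounting, the proof is essentially assembling Theorem \ref{thm:main} and Proposition \ref{prop:set-part}.
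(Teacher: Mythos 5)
Your proof is correct and follows essentially the same route as the paper: apply Theorem \ref{thm:main} to get an $E_{\otimes}$-definable approximation $A$ with $\mu(A\Delta E)<\varepsilon^2$, pass to the atoms of the Boolean algebras generated by the sides of the constituent boxes to obtain a compatible $E$-definable rectangular partition, invoke Proposition \ref{prop:set-part}, and in the bounded VC-dimension case control the number of atoms via Fact \ref{fact:s-sh}. The only cosmetic difference is that the paper takes the atoms of the algebra of \emph{all} sets $E$-definable over $D_i$ rather than just those generated by the $X^j_i$; both yield the same Sauer--Shelah count, and you correctly identify that this count is what prevents the exponential blow-up when passing from a union of boxes to a partition.
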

\begin{proof}

Using Theorem \ref{thm:main} there is a set $A$ which is $R_\ootimes$-definable over some finite set $\vec{D} = (D_1, \ldots, D_k)$
and $\mu(A\Delta R)< \varepsilon^2$. Say $A=\cup_{j\in [m]}
A^j_1\tttimes A^j_k$ where each $A^j_i \subseteq V_i$ is
$R$-definable. 

For each $i\in[k]$, let $\CP_i$ be the set of all atoms in the Boolean algebra  generated by all $R$-definable over $D_i$
subsets of $V_i$; so each $\CP_i$
consists of $R$-definable sets partitioning $V_i$. We claim that $\vec\CP$ is 
$\varepsilon$-regular  with $0\mh1$-densities.

Let 
\[\Sigma :=\{ X\in \vec \CP \colon \mu( X \cap (A\Delta R))\geq \varepsilon\mu(X) \}.\]
Since $\mu(A\Delta R)< \varepsilon^2$ and $\mu$ is finitely additive we
obtain that 
\[ \sum_{X\in \Sigma} \mu(X) \leq \varepsilon. \]   

Let $X=X_1\tttimes X_k\in \vec \CP \setminus \Sigma$. We have 
\[ \mu( X \cap (A\Delta R))<  \varepsilon\mu(X). \]
By definition of $\vec \CP$, either $X \subseteq A$ or $X\cap
A = \emptyset$. 

Assume first $X \subseteq A$, then $X \cap (A\Delta R)= X \setminus
R(X_1,\dotsc, X_k)$ and 
\[ \mu(X \setminus R(X_1,\dotsc, X_k)) =
\mu(X)-\mu(R(X_1,\dotsc,X_k)),\] 
hence  
\[ \mu(X_1\tttimes X_k)-\mu(R(X_1,\dotsc,X_k)) \leq \varepsilon
\mu(X_1\tttimes X_k).\]

If $X\cap A=\emptyset$ a similar argument shows that 
\[  \mu(R(X_1,\dotsc,X_k)) < \varepsilon
\mu(X_1 \times \dotsc \times X_k).\]

Assume in addition that $R$ is NIP with VC-dimension at most $d$. Then
using Theorem \ref{thm:main} we can assume that 
$| D_i|\leq  C_{k,d}\bigl(\frac{1}{\varepsilon}\bigr)^{4(k-1)d}$ for
$i\in[k]$, and by Fact \ref{fact:s-sh}, 
\[ |\CP_i|\leq C_d|D_i|^d\leq
C_d\Big(C_{k,d}\bigl(\tfrac{1}{\varepsilon}\bigr)^{4(k-1)d}\Big)^d=
C_d(C_{k,d})^d\bigl(\tfrac{1}{\varepsilon}\bigr)^{4(k-1)d^2}.
 \]
  \end{proof}

\begin{rem} In the case when each $V_i$ is finite the above theorem without
 the NIP part is trivial, since we can take $\CP_i$ to be the set of all
  atoms in the Boolean algebra of \textbf{all} $R$-definable
  subsets of $V_i$. 
\end{rem}

This immediately gives an analogous theorem for $k$-uniform
hypergraphs. We state it only in the NIP case.

\begin{thm}
  \label{thm:main2}

  Let $G=(V;R)$ be a $k$-uniform hypergraph with VC-dimension at most $d$, $\CB$ a Boolean algebra on $V$ containing all of the fibers of $R$, and let $\mu$ be a measure on $\CB$ which is finitely approximated on $R \subseteq V \times  V^{k-1}$.
Then for any $\varepsilon>0$ there is an $R$-definable partition $\CP$
of $V$ such that $\vec\CP=(\CP,\dotsc,\CP)$ is an 
$\varepsilon$-regular partition of the $k$-hypergraph $\tilde G=(V,\dotsc,V;R)$
 with $0\mh1$-densities relatively to the measure $\mu^k = \mu \ltimes \ldots \ltimes \mu$, and 
$| \vec \CP| \leq
C_d(kC_{k,d})^d\bigl(\tfrac{1}{\varepsilon}\bigr)^{4(k-1)d^2} $.
\end{thm}

\begin{proof}
  By assumption $(R; V_i, \CB_i, \mu_i : 1\leq i \leq k)$ is a compatible finitely approximated system, with $V_i := V$,  $\CB_i:=\CB$, $\mu_i := \mu$, and 
  $R$ viewed  as a relation $R \subseteq  V_1\tttimes V_k$. Let  $\mu^k=\mu_1\llltimes \mu_k$.

Let $\vec{\CP}$ be an $\varepsilon$-regular partition with $0\mh1$-densities as in the proof of Theorem \ref{thm:main1}, i.e.~$\CP_i$ is the set of all atoms in the Boolean algebra  generated by all $R$-definable over $D_i \subseteq V^{k-1}$
subsets of $V_i$. 

Let $D=\bigcup_{i\in [k]} D_i$. We take $\CP$ to be the set of all atoms
  in the Boolean algebra of all $R$-definable over  $D$ subsets of
  $V$. Then $(\CP,\dotsc,\CP)$ is also a $\varepsilon$-regular partition with $0\mh1$-densities as it refines $\vec{\CP}$, and by  Fact \ref{fact:s-sh}, 
\[ |\CP|\leq C_d|D|^d\leq
C_d\Big(kC_{k,d}\bigl(\tfrac{1}{\varepsilon}\bigr)^{4(k-1)d}\Big)^d=
C_d(kC_{k,d})^d\bigl(\tfrac{1}{\varepsilon}\bigr)^{4(k-1)d^2}
\]    
\end{proof}

Now we give some examples where Theorems \ref{thm:main1} and \ref{thm:main2} apply. 

\subsection{The finite case}
\label{sec:finite-case}

Let $G=(V_1,\dots,V_k;R)$ be a finite $k$-hypergraph.  
For each $i\in [k]$ let $\mu_i$ be the uniform counting measure on $V_i$,
i.e. $\mu_i(X)=\frac{|X|}{|V_i|}$, and let $\mu$ be the uniform counting measure on
  $V_1\tttimes V_k$. Then all $\mu_i$ and $\mu$ are finitely approximated  
  measures with $\mu=\mu_1\llltimes\mu_k$. Hence all the results of
  the previous section can be applied to finite $k$-hypergraphs with
  respect to the counting measures.

\begin{cor} \label{lem:sze-hyper}

Let $G=(V;R)$ be a finite $k$-uniform hypergraph. Assume that $R$ has 
VC-dimension at most $d$, as a relation on $V^k$.  

 There is a
partition $V=V_1\dcup\dotsb\dcup V_M$ for some $M\leq C_d(k C_{k,d})^d\bigl(\tfrac{1}{\varepsilon}\bigr)^{4(k-1)d^2}$, 
  numbers 
$\delta_{\vec i}\in \{0,1\}$ for $\vec i \in [M]^k$, and an exceptional set $\Sigma \subseteq
 [M]^k$ such that 
\[ \sum_{(i_1,\dotsc, i_k)\in \Sigma}  |V_{i_1}|\dotsb |V_{i_k}| \leq \varepsilon|V|^k  \]
and for each $\vec i=(i_1,\dotsc,i_k)\in [M]^k \setminus \Sigma$ we have 
\[  | \, |R(V_{i_1},\dotsc,V_{i_k})|- \delta_{\vec i}|V_{i_1}|\dotsb |V_{i_k}|\, | <\varepsilon
|V_{i_1}|\dotsb |V_{i_k}|. \]
\end{cor}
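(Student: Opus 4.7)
The plan is to obtain this as a direct corollary of Theorem \ref{thm:main2}, applied with the $\mu_i$ taken to be the normalized counting measure on $V$. First I would verify that the counting measure $\mu(X) = |X|/|V|$ on the field of all subsets of $V$ is fap on $E$: enumerating $V$ as $p_1, \ldots, p_{|V|}$, one has $\Av(p_1, \ldots, p_{|V|}; F) = |F|/|V| = \mu(F)$ for every $F \subseteq V$, so this single enumeration is a (trivially exact) $\varepsilon$-approximation simultaneously for every family. The product measure $\mu_1 \llltimes \mu_k$ is then the normalized counting measure on $V^k$, namely $\mu(Y) = |Y|/|V|^k$.

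Applying Theorem \ref{thm:main2} with these measures yields an $E$-definable partition $\CP = \{V_1, \ldots, V_M\}$ with $M$ bounded as required, together with an exceptional family $\Sigma^\star \subseteq \vec\CP$ such that $\sum_{X \in \Sigma^\star} \mu(X) \leq \varepsilon$ and for each $V_{i_1} \times \ldots \times V_{i_k} \notin \Sigma^\star$ exactly one of the two density alternatives from Definition \ref{defn: e-regular partion with dens 0-1} holds. I would then let $\Sigma \subseteq [M]^k$ be the set of indices $\vec i = (i_1, \ldots, i_k)$ whose corresponding box lies in $\Sigma^\star$, and for each $\vec i \notin \Sigma$ set $\delta_{\vec i} = 1$ if the ``nearly full'' alternative holds and $\delta_{\vec i} = 0$ otherwise (the value of $\delta_{\vec i}$ on $\Sigma$ can be set arbitrarily).

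All that remains is a routine translation between the measure-theoretic and counting forms. Multiplying $\sum_{X \in \Sigma^\star} \mu(X) \leq \varepsilon$ through by $|V|^k$ yields the bound $\sum_{\vec i \in \Sigma} |V_{i_1}| \cdots |V_{i_k}| \leq \varepsilon |V|^k$. For a fixed $\vec i \notin \Sigma$ and arbitrary $A_1 \subseteq V_{i_1}, \ldots, A_k \subseteq V_{i_k}$, applying the appropriate alternative of the dichotomy with $Y_j = A_j$ and clearing the factor of $|V|^k$ produces either $|A_1|\cdots|A_k| - |E(A_1, \ldots, A_k)| < \varepsilon |V_{i_1}|\cdots|V_{i_k}|$ (when $\delta_{\vec i} = 1$) or $|E(A_1, \ldots, A_k)| < \varepsilon |V_{i_1}|\cdots|V_{i_k}|$ (when $\delta_{\vec i} = 0$). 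Since $0 \leq |E(A_1, \ldots, A_k)| \leq |A_1|\cdots|A_k|$, both cases combine into the desired inequality $\bigl| \, |E(A_1,\ldots,A_k)| - \delta_{\vec i}|A_1|\cdots|A_k| \, \bigr| < \varepsilon |V_{i_1}|\cdots|V_{i_k}|$. There is no serious obstacle here: the corollary is essentially a restatement of Theorem \ref{thm:main2} in combinatorial language, and the bound on $M$ is inherited verbatim (up to the precise value of the leading constant).
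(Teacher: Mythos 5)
Your proposal is correct and matches the paper's own (largely implicit) argument: Section 3.2 simply observes that normalized counting measures are fap and that their semidirect product is the counting measure on $V^k$, and then derives the corollary from Theorem \ref{thm:main2} exactly as you do. The only minor discrepancy is the leading constant ($C_d(kC_{k,d})^d$ from Theorem \ref{thm:main2} versus $C_d(C_{k,d})^d$ as stated in the corollary), which you correctly flag and which is an inconsistency in the paper's statement rather than in your argument.
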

So the size of the partition $M$ depends only on $\varepsilon$, $k$ and the VC-dimension $d$, polynomially in terms of $\frac{1}{\varepsilon}$, and not on the size of the graph.

\DIFaddend \subsection{Hypergraphs definable in NIP structures} \label{sec: hypergraphs definable in NIP}
Now we discuss the model theoretic setting, which is the main motivating example for this article. For a detailed account of this setting, we refer to the introduction in \cite{distal} and to \cite{Bourbaki}.

Let $\CM$ be a first-order structure. Recall that a \emph{Keisler measure} on $M^n$ is a finitely additive probability measure on the
Boolean algebra of all definable subsets of $M^n$. Given a formula $\phi(x)$ with parameters from $M$ and
a Keisler measure $\mu$ on $M^{|x|}$, we will write $\mu(\phi(x))$ to denote $\mu(\phi(M^{|x|}))$. 
Let us fix a definable relation $E(x_1, \ldots, x_k)$, let $V_i = M^{|x_i|}$ and let $\CB_i$ be the Boolean algebra of all definable subsets of $M^{|x_i|}$. Let $\mu_i$ be a Keisler measure on $M^{|x_i|}$, equivalently a measure on $\CB_i$.

Recall that a structure $\mathcal{M}$ is \emph{an NIP structure} if for every formula
$\phi(x,y)$ the family of all $\phi$-definable sets
$\mathcal{F}_\phi=\{\phi(M,a) : a \in M^{|y|} \}$ has finite VC-dimension. In particular, if $\CM$ is NIP, then any definable relation $E(x_1, \ldots, x_k) \subseteq M^{|x_1|} \times \ldots \times M^{|x_k|}$ has finite VC dimension (in the sense of Definition \ref{def:nip}). Recall that, in an NIP structure $\CM$, a Keisler measure $\mu$ on $M^{|x|}$ is \emph{generically stable} if it is finitely approximated on \emph{all} definable relations $\phi(x,y) \subseteq M^{|x|} \times M^{|y|}$, in particular on $E$. 
\begin{rem} \label{rem: fap vs fim}
There are several equivalent characterizations of generically stable measures in NIP structures. Our definition of finitely approximated measures only requires the \emph{existence} of an $\varepsilon$-approximation for every $\varepsilon$. A stronger notion of a \emph{fim measure} is given in \cite{hrushovski2013generically} requiring that in fact for every $\varepsilon$, there is some sufficiently large $n$ such that \emph{almost all} $n$-tuples (in the sense of the product measure $\mu^{(n)}$) give an $\varepsilon$-approximation. While $\mu$ is finitely approximated on all formulas if and only if it is fim on all formulas under the NIP assumption (by the results in \cite{hrushovski2013generically}), it is not clear if the equivalence holds in general \footnote{While this paper was under review, some examples separating the two notions outside of the NIP context were provided in \cite{conant2020remarks}.}.
\end{rem}

Now, the semidirect product $\mu = \mu_1\llltimes \mu_k $ corresponds to the non-forking product $\mu_1 \otimes \ldots \otimes \mu_k$. Hence Theorem \ref{thm:main1} translates into the following.

\begin{cor} \label{cor: reg in NIP}

Let $\CM$ be NIP. For every definable relation $E\left(x_{1},\ldots,x_{k}\right)$
there is some $c=c\left(E\right)$ such that: for any $\varepsilon>0$
and any generically stable\textcolor{red}{{} }Keisler measures $\mu_{i}$
on $M^{|x_{i}|}$ there are partitions $M^{|x_i|}=\bigcup_{j<K}A_{i,j}$
and a set $\Sigma\subseteq [K] ^{k}$ such that:
\begin{enumerate}
\item $K\leq\left(\frac{1}{\varepsilon}\right)^{c}$.
\item $\mu\left(\bigcup_{\left(i_{1},\ldots,i_{k}\right)\in\Sigma}A_{1,i_{1}}\times\ldots\times A_{k,i_{k}}\right)\leq \varepsilon$,
where $\mu=\mu_{1}\otimes\ldots\otimes\mu_{k}$,
\item for all $\left(i_{1},\ldots,i_{k}\right) \notin \Sigma$ we have $$|\mu(E \cap (A_{1,i_1} \times \ldots \times A_{k,i_k})) - \delta_{\vec{i}}\mu(A_{1,i_1} \times \ldots \times A_{k,i_k})| < \varepsilon \mu(A_{1, i_1} \times \ldots \times A_{k, i_k})$$ for some $\delta_{\vec{i}} \in \{ 0,1\}$.
\item each $A_{i,j}$ is defined by an instance of an $E$-formula, with this formula depending only on $E$ and $\varepsilon$.
\end{enumerate}
\end{cor}

Theorem \ref{thm:main1} is more general however as both NIP and finite approximability are only assumed locally for $R$, and can be applied outside of the context of NIP structures.
\begin{sample}
Let $\CM$ be a pseudo-finite field, viewed as a structure in the ring language (e.g. an ultraproduct of finite fields modulo some non-principal ultrafilter). Then the ultralimit of the counting measures gives a measure on the definable sets in $\CM$. This measure is finitely approximable on all quantifier-free definable relations (by Lemma \ref{lem: epsilon-approximation for stable relations}, as it is well-known that all quantifier-free formulas in $\CM$ are stable), but not finitely approximable  for general definable relations (e.g.~because the random graph is definable). Still, Theorem \ref{thm:main1} can be applied to any quantifier-free definable relation in this situation.
\end{sample} 

We list some specific structures and Keisler measures for which Corollary \ref{cor: reg in NIP} applies to all definable relations (again, see introduction in \cite{distal} for more details).

\begin{sample} \label{ex: NIP structures}
	 Examples of NIP structures:
	\begin{enumerate}
	\item Abelian groups and modules (see e.g. \cite{tent2012course}),
\item  $\left(\mathbb{C},+,\times,0,1\right)$ (see e.g. \cite{tent2012course}),
\item Differentially closed fields (see e.g. \cite{tent2012course}),
\item free groups (in the pure group language $\left(\cdot,^{-1},0\right)$, see \cite{sela2013diophantine}),
\item Planar graphs (in the language with a single binary relation corresponding to the edges, see \cite{podewski1978stable}).
\item (Weakly) $o$-minimal structures, e.g. $M=\left(\mathbb{R},+,\times,e^{x}\right)$ (see \cite{distal}).
\item Presburger arithmetic, i.e. the ordered group of integers (see \cite{distal}).
\item $p$-minimal structures with Skolem functions,  e.g. $\left(\mathbb{Q}_{p},+,\times\right)$
for each prime $p$.
\item The (valued differential) field of transseries (\cite{aschenbrenner2015asymptotic, DistalExamples}).

\item Algebraically closed valued fields (see e.g. \cite{simon2015guide})
	\end{enumerate}
\end{sample}

\begin{sample} \label{ex: generically stable measures}
	Examples of generically stable Keisler measures (see e.g.~the introduction in \cite{distal} for more details on why these measures are generically stable):
	\begin{enumerate}
	\item Any Keisler measure concentrated on a finite set (as it is clearly finitely approximable).
  \item Let $\lambda_n$ be the Lebesgue measure on the unit cube $[0,1]^n$ in $\RR^n$.  Let
    $\mathcal M$ be an o-minimal structure expanding the field of real numbers.  If
    $X \subseteq \RR^n$ is definable in $\mathcal M$, then, by o-minimal cell decomposition,
    $X\cap [0,1]^n$ is Lebesgue measurable, hence $\lambda_n$ induces a Keisler measure on $M^n$.
  \item Similarly to (2), for every prime $p$ a (normalized) Haar measure on a compact ball in
    $\mathbb{Q}_p$ induces a Keisler measure on $\QQ_p^n$.
	\end{enumerate}
\end{sample}

\section{Stable and distal cases} \label{sec: stable and distal}

Next we consider two extreme opposite special cases of NIP hypergraphs: stable and distal ones. Stable theories are at the cornerstone of Shelah's classification theory \cite{ShelahCT}, and we refer to e.g.~\cite{tent2012course, pillay1996geometric} for a general exposition of stability. Examples (1) -- (5) in Example \ref{ex: NIP structures} are stable.
Distal theories were introduced more recently in \cite{simon2013distal} aiming to capture ``purely unstable'' structures in NIP theories. Examples (6) -- (9) in Example \ref{ex: NIP structures} are distal. Example (10) gives a combination of these two cases: it has a stable part (the algebraically closed residue field) and a distal part (the value group), and the theory developed in \cite{HHM} demonstrates that the whole structure can be analyzed in terms of these two parts. There are certain generalizations of this decomposition principle for arbitrary NIP theories \cite{shelah2012dependent, SimonTypeDecomp}.

\subsection{Stable hypergraph regularity} \label{sec: stable case}

A regularity lemma for stable graphs was proved in \cite{ms} for counting measures. Later,   \cite{malliaris2016stable} provides a proof for general measures. However, the proof in \cite{malliaris2016stable} does not give any bounds on the size of the partition. In this section we combine these two approaches and prove a regularity lemma for stable hypergraphs relatively to arbitrary measures, bounding the size of the partition by a polynomial in $\frac{1}{\varepsilon}$.

\begin{defn} \label{def: stability}
\begin{enumerate}
	\item A binary relation $R\subseteq V \times W$ is $d$-\emph{stable}, $d \in \mathbb{N}$, if there is no tree of parameters $(b_\eta: \eta \in 2^{<d})$ in $W$ such that for any $\eta \in 2^d$ there is some $a_\eta \in V$ such that $a_\eta \in R_{b_\nu} \iff \nu \frown 1 \trianglelefteq \eta $ (where $\trianglelefteq$ is the tree order). 
	\item A relation $R \subseteq V_1 \times \ldots \times V_k$ is \emph{$d$-stable} if for every $I\subseteq [k]$, viewed as a binary relation on $V_I \times V_{I^\co}$, it is $d$-stable.
	\item A relation $R$ is stable if it is $d$-stable for some $d$.
\end{enumerate}
\end{defn}

Note that if $R$ is stable, then it has finite VC-dimension.

\DIFaddend \begin{rem}
	Alternatively, stability of a relation can be defined in terms of the so called \emph{order property}. Namely, $R \subseteq V \times W$ has the $e$-order property, $e \in \mathbb{N}$, if there are some elements $a_i$ in $V$ and $b_i$ in $W$, $i =1, \ldots, e$, such that $a_i \in R_{b_j} \iff i < j$ for all $1\leq i\neq j \leq e$. It is a standard fact in basic stability theory that $R$ is stable (in the sense of Definition \ref{def: stability}) if and only if it does not have the $e$-order property for some $e$ (but the relation between the corresponding parameters $e$ and $d$ is exponential, see e.g. \cite[Lemma 6.7.9]{hodges1993model}).
\end{rem}

\begin{lem} \label{lem: epsilon-approximation for stable relations}
	Let $R \subseteq V \times W$ be a stable relation, and $\CB_V$ be a Boolean algebra on $V$ such that $R_b \in \CB_V$ for all $b \in W$. Then any measure $\mu$ on $\mathcal{B}_{V}$ is finitely approximable on $R$.
\end{lem}
\begin{proof}
Assume that  $R$ is $d$-stable.
By Definition \ref{def: fap on R}, we must show that $\mu$ is finitely approximable on the family $\mathcal{R}^m_V$ of subsets of $V$, for every $m \in \mathbb{N}$. Fix $m$.

	\textbf{Claim 1.} For any $\varepsilon > 0$ there is some $t = t(\varepsilon, d, m)$ and some $0\mh1$ measures $\delta_1, \ldots, \delta_t$ on $\CB_V$ (possibly with repetitions) such that $\mu (S) \approx^{\varepsilon} \frac{1}{t} \sum_{i=1}^{t} \delta_i(S)$ for all $S \in \mathcal{R}_V^m$.

	 \emph{Proof.}  As $R$ is stable, it follows that the family $\mathcal{R}_V^m$ has finite VC-dimension, and it depends only on $d, m$. Fix $\varepsilon > 0$. By the VC-theorem there is some $t = t(\varepsilon, d, m)$ such that for every finite (or countable) $\mathcal{F} \subseteq \mathcal{R}_V^m$ there are some $a_1, \ldots, a_t \in V$ such that $\mu (S) \approx^{\varepsilon} \frac{1}{t} \sum_{i=1}^{t} \textbf{1}_S(a_i)$ for all $S \in \mathcal{F}$. For $1\leq i \leq t$ and a finite $\CF$, define a $0\mh1$ measure $\delta^{\CF}_i$ on $\CB_V$ by $\delta^{\mathcal{F}}_i(S) := \textbf{1}_S(a_i)$ for all $S \in \CB_V$, then $\mu (S) \approx^{\varepsilon} \frac{1}{t} \sum_{i=1}^{t} \delta^{\mathcal{F}}_i(S)$ for all $S \in \mathcal{F}$. The claim now  follows by compactness of the space of all $0\mh1$ measures on $\CB_V$ (see 
\cite[Lemma 4.8]{hrushovski2011nip} 
for a more detailed account).

\textbf{Claim 2.} Every $0\mh 1$ measure $\delta$ on $\CB_V$ is finitely approximable on $\mathcal{R}_V^m$.

	\emph{Proof.}  This is a straightforward consequence of the explicit form of the definability of types in local stability. Namely, consider a binary relation $E \subseteq V \times \mathcal{R}_V^m$ given by $E := \{ (a,S) : a \in S \}$. Then $E$ is $r$-stable for some $r$ (as $R$ is stable, and stability is preserved under Boolean combinations). We can identify our $0\mh1$  measure $\delta$ restricted to $E$ with a complete $E$-type. Then (see e.g.~the proof of 
\cite[Chapter 1, Lemma 2.2]{pillay1996geometric}) for every $t \in \mathbb{N}$ we can choose some $c_1, \ldots, c_t \in V$ such that for every $S\in \mathcal{R}_V^m$:
	\begin{itemize}
	\item if $|\{i : c_i \in S \}| > r$, then $\delta(S) = 1$;
	\item if $|\{i : c_i \notin S \}| > r$, then $\delta(S) = 0$.

	\end{itemize}

Hence if $t$ is large enough so that $\frac{r}{t} < \varepsilon$, then $c_1, \ldots, c_t$ give an  $\varepsilon$-approximation of $\delta$ on $\mathcal{R}_V^m$.

~

Now, let $\varepsilon >0$ and $m$ be arbitrary, and let $\delta_1, \ldots, \delta_t$  be as given by Claim 1. By Claim 2, let $A_i$ be a multiset in $V$ giving an $\varepsilon$-approximation for $\delta_i$ on $\mathcal{R}_V^m$. It is straightforward to verify that $A = \bigcup_{i=1}^{t} A_i$ is a $2 \varepsilon$-approximation for $\mu$ on $\mathcal{R}_V^m$.
\end{proof}

\ 

From now on we work in the same setting as in Section \ref{sec:setting}. Throughout the section we let the sets $V_1, \ldots, V_k$ and a stable relation $R\subseteq V_1 \times  \ldots \times  V_k$ be given, let $\mathcal{B}_{i}$ be a field on $V_i$, and let $\mu_i$ be a measure on $\mathcal{B}_{i}$. Assume moreover that for every $i \in [k]$,  $R_b \in \mathcal{B}_{i}$ for all $b \in V_{i^\co}$.

	In view of Lemma \ref{lem: epsilon-approximation for stable relations}, if $R\subseteq V_1 \times \ldots \times V_k$ is a stable relation, then for every $I = \{ i_1, \ldots,  i_n \} \subseteq [k]$ we have a semi-direct product
measure $\mu_{I}=\mu_{i_1} \llltimes \mu_{i_n}$ on $\mathcal{B}_{I} = \mathcal{B}_{i_1} \times \ldots \times \mathcal{B}_{i_n} $ (see Definition \ref{def: iterated product measure}) which is finitely approximable on $R$ (Proposition \ref{prop:prod-fim}).

\begin{defn} \label{def: epsilon good} For any $I\subseteq [k]$ and $\varepsilon > 0$, a set $A \in \mathcal{B}_{I}$ is \emph{$\varepsilon$-good} if for any $b \in {V_{I^\co}}$, either $\mu_I(A \cap R_b) < \varepsilon \mu_I(A)$ or $\mu_I(A \cap R_b) > (1-\varepsilon) \mu_I(A)$.
\end{defn}

\begin{rem}\label{rem: e-good implies positive measure}
Notice that if a set is $\varepsilon$-good, $\varepsilon >0$, then it has measure greater than $0$.
\end{rem}

\begin{lem} \label{lem: measurable envelope for fim}
	Assume that $\mu_{I^\co}$ is finitely approximable on $R$. For any $\varepsilon > 0$, consider the set
		$$ A = \{ a \in V_I :  \mu_{I^\co}(R_a) < \varepsilon \}. $$
		Then there is an $R$-definable set $A' \supseteq A$ such that $\mu_{I^\co}(R_a) < 2 \varepsilon$ for all $a \in A'$.
\end{lem}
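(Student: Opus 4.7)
The plan is to use the fap assumption on $\mu_{I^\co}$ to replace the (generally non-$R$-definable) function $a \mapsto \mu_{I^\co}(R_a)$ by a concrete $R$-definable approximation, and then take $A'$ to be the level set of this approximation.

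More precisely, since $\mu_{I^\co}$ is fap on $R$ (viewed as a binary relation on $V_{I^\co} \times V_I$, whose fibers over $V_I$ are exactly the sets $R_a$), applied with tolerance $\varepsilon/2$ I obtain points $p_1, \dots, p_n \in V_{I^\co}$ such that for every $a \in V_I$,
\[
\bigl|\mu_{I^\co}(R_a) - \mathrm{Av}(p_1,\dots,p_n;R_a)\bigr| < \varepsilon/2.
\]
I then define
\[
A' := \bigl\{ a \in V_I : \mathrm{Av}(p_1,\dots,p_n;R_a) < 3\varepsilon/2 \bigr\}.
\]

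The verification is straightforward from the triangle inequality. If $a \in A$, i.e.\ $\mu_{I^\co}(R_a) < \varepsilon$, then $\mathrm{Av}(p_1,\dots,p_n;R_a) < \varepsilon + \varepsilon/2 = 3\varepsilon/2$, so $a \in A'$; hence $A \subseteq A'$. Conversely, if $a \in A'$ then $\mu_{I^\co}(R_a) < 3\varepsilon/2 + \varepsilon/2 = 2\varepsilon$, which is what the lemma demands.

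It remains to check that $A'$ is $R$-definable as a subset of $V_I$. The point is that $p_i \in R_a$ iff $a \in R_{p_i}$, so $\mathrm{Av}(p_1,\dots,p_n;R_a) = \tfrac{1}{n} |\{i \in [n] : a \in R_{p_i}\}|$. Therefore
\[
A' = \bigcup_{\substack{S \subseteq [n] \\ |S|/n < 3\varepsilon/2}} \Bigl( \bigcap_{i \in S} R_{p_i} \;\setminus\; \bigcup_{i \notin S} R_{p_i} \Bigr),
\]
which is a finite Boolean combination of the fibers $R_{p_i}$ with $p_i \in V_{I^\co}$, hence $R$-definable over $\{p_1,\dots,p_n\}$. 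There is no real obstacle here; the only thing to be slightly careful about is the direction of the fiber notation (fap produces an approximation of $\mu_{I^\co}$ by points in $V_{I^\co}$, which are precisely the parameters that give $R$-definable subsets of $V_I$), and choosing the tolerance $\varepsilon/2$ and the threshold $3\varepsilon/2$ so that both the inclusion $A \subseteq A'$ and the bound $\mu_{I^\co}(R_a) < 2\varepsilon$ on $A'$ come out.
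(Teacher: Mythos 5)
Your proof is correct and is essentially the paper's own argument: take an $\varepsilon/2$-approximation $p_1,\dots,p_n \in V_{I^\co}$ of $\mu_{I^\co}$ on the fibers $R_a$, and let $A'$ be the set of $a$ whose empirical frequency $\Av(p_1,\dots,p_n;R_a)$ is below $3\varepsilon/2$, which is a Boolean combination of the sets $R_{p_i}$. If anything, your explicit description of $A'$ as a union of atoms is written more carefully than the paper's.
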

\begin{proof}
	Let $b_1, \ldots, b_n \in V_{I^\co}$ be such that $\mu_{I^\co}(R_a) \approx^{\frac{\varepsilon}{2}} \Av(b_1,\dotsc,b_n;R_a) $ for all $a \in V_I$. Let $\mathcal{J} = \{ J \subseteq [n] : \frac{|J|}{n} < \frac{3}{2} \varepsilon\}$, and let $A' = \bigcup_{J \in \mathcal{J}} \left( \bigcap_{j \in J} R_{b_j} \cap \bigcap_{j \notin J} \left(R_{b_j}\right)^{\co} \right)$. It is easy to check that $A'$
 satisfies the requirements. \end{proof}

\begin{lem} \label{lem: e-good definable pieces}
Fix some $I \subseteq [k]$ and some $J \subseteq [k] \setminus I$.
Let $\varepsilon>0$ and $B \in \mathcal{B}_{J}$ be an $\varepsilon$-good set, and let $A \in \mathcal{B}_{I}$ and $c \in V_{[k] \setminus (I \cup J)}$ be arbitrary, such that $A$ is of positive measure (note that $B$ is of positive measure by Remark \ref{rem: e-good implies positive measure}). Then (by Definition \ref{def: epsilon good}) $A$ is a disjoint union of the sets $$A^{0}_{B,c, \varepsilon} = \{ a \in A : \mu_{J}(R_{a,c} \cap B) < \varepsilon \mu_{J}(B) \}$$ and $$A^1_{B,c, \varepsilon} = \{ a \in A : \mu_{J}(R_{a,c} \cap B) > (1-\varepsilon) \mu_{J}(B) \}.$$ 
Assume that $\varepsilon < \frac{1}{4}$. Then $A^0_{B,c, \varepsilon}, A^1_{B,c, \varepsilon} \in \mathcal{B}_{I}$.
\end{lem}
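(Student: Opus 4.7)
The plan is to derive the decomposition $A = A^0_{B,c} \sqcup A^1_{B,c}$ from $\varepsilon$-goodness alone, and then combine Claim \ref{claim:fim-int} with the integrability criterion Claim \ref{lem:def-sep} to place both halves in $\mathcal{B}_I$. For the decomposition: for each $a \in A$, applying $\varepsilon$-goodness of $B$ to the parameter $(a,c) \in V_{J^\co}$ gives $\mu_J(R_{(a,c)} \cap B) \in [0,\varepsilon\mu_J(B)) \cup ((1-\varepsilon)\mu_J(B), \mu_J(B)]$; since $\varepsilon < 1/2$ these intervals are disjoint, so $A$ is the disjoint union of $A^0_{B,c}$ and $A^1_{B,c}$.

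For measurability, I would introduce the $R$-definable relation $R' \subseteq V_J \times V_I$ whose fibers over $V_I$ are $R'_a = R_{(a,c)}$. Each $R'_a$ is a fiber of $R$, so the fap-ness of $\mu_J$ on $R$ (guaranteed by Lemma \ref{lem: epsilon-approximation for stable relations} together with Proposition \ref{prop:prod-fim}) immediately transfers to $R'$; Claim \ref{claim:fim-int} then yields that the function $f \colon V_I \to \mathbb{R}$ with $f(a) := \mu_J(R'_a \cap B) = \mu_J(R_{(a,c)} \cap B)$ is $\mathcal{B}_I$-integrable.

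Next comes the crux. By $\varepsilon$-goodness the range of $f\rest A$ avoids the open interval $(\varepsilon\mu_J(B), (1-\varepsilon)\mu_J(B))$ of positive width $(1-2\varepsilon)\mu_J(B)$. Setting $\delta := \tfrac{1}{2}(1-2\varepsilon)\mu_J(B) > 0$, Claim \ref{lem:def-sep} produces finitely many $Y_1, \ldots, Y_n \in \mathcal{B}_I$ covering $V_I$ on each of which $f$ oscillates by less than $\delta$; passing to the atoms of the Boolean algebra generated by $Y_1, \ldots, Y_n$ I may assume the $Y_i$ are pairwise disjoint and still in $\mathcal{B}_I$. The oscillation bound then prevents $f$ from straddling the gap on any $Y_i \cap A$, so on each such piece $f$ is uniformly below $\varepsilon\mu_J(B)$ or uniformly above $(1-\varepsilon)\mu_J(B)$. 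Letting $I_0$ collect the indices of the first kind, we conclude $A^0_{B,c} = A \cap \bigcup_{i \in I_0} Y_i \in \mathcal{B}_I$, and dually for $A^1_{B,c}$.

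The main obstacle I expect is conceptual rather than technical: $f$ is in general not a simple function, so its level sets $\{ a : f(a) < \varepsilon\mu_J(B)\}$ need not lie in $\mathcal{B}_I$ a priori. The role of $\varepsilon$-goodness is exactly to upgrade approximation of $f$ by $\mathcal{B}_I$-simple functions (integrability) into exact definability of these level sets, by creating a gap that any sufficiently fine approximation is forced to respect. The hypothesis $\varepsilon < 1/4$ is more generous than strictly needed for this argument, which only uses $\varepsilon < 1/2$.
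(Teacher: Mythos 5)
Your proof is correct, but it reaches the conclusion by a different mechanism than the paper. The paper's proof applies Lemma \ref{lem: measurable envelope for fim} to the measures obtained by restricting $\mu_I$ to $A$ and $\mu_J$ to $B$: this produces $R$-definable supersets $A'_0 \supseteq A^0_{B,c}$ and $A'_1 \supseteq A^1_{B,c}$ on which the relative fiber density is $< 2\varepsilon$, respectively $> 1-2\varepsilon$, and the hypothesis $\varepsilon < \frac14$ is used so that these relaxed thresholds are still incompatible with the opposite band coming from $\varepsilon$-goodness; intersecting with $A$ then recovers $A^0_{B,c}$ and $A^1_{B,c}$ exactly. You instead invoke the integrability of $a \mapsto \mu_J(R_{a,c} \cap B)$ (Claim \ref{claim:fim-int}) together with the gap of width $(1-2\varepsilon)\mu_J(B)$ in its range over $A$, and use the oscillation criterion of Claim \ref{lem:def-sep} to see that no piece of a sufficiently fine $\mathcal{B}_I$-partition can straddle the gap. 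Both arguments are ultimately powered by the same finite $\varepsilon$-approximation supplied by fap-ness (the oscillation pieces in Claim \ref{claim:fim-int} and the envelope in Lemma \ref{lem: measurable envelope for fim} are each built from the trace of the fibers on such an approximation), so the difference is one of packaging rather than substance; still, your route bypasses the envelope lemma entirely, and your observation that only $\varepsilon < \frac12$ is needed for this step is correct, whereas the paper's envelope argument genuinely consumes the extra room (it needs $2\varepsilon < 1-\varepsilon$). The one caveat, which applies equally to the paper's own proof, is that concluding $A^0_{B,c}, A^1_{B,c} \in \mathcal{B}_I$ tacitly uses that $\mathcal{B}_I$ contains the $R$-definable subsets of $V_I$ (your partition pieces, like the paper's envelopes, are Boolean combinations of fibers $R_{(p,c)}$ with $p$ ranging over the finite approximation), so this is a feature of the ambient setup rather than a gap in your argument.
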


\begin{proof}
 Indeed, let $\mu'_I$ be given by conditioning $\mu_I$ on $A$ (i.e. $\mu'_I(X) = \frac{\mu(X\cap A)}{\mu(A)}$ for all $X$) and let $\mu'_J$ be given by conditioning  $\mu_J$ on $B$. As $R$ is stable, by Lemma \ref{lem: epsilon-approximation for stable relations} both $\mu'_I, \mu'_{J}$ are finitely approximable on $R$. Hence, by Lemma \ref{lem: measurable envelope for fim} we can find some $R$-definable $A'_0 \supseteq A^0_{B,c, \varepsilon}, A'_1 \supset A^1_{B,c, \varepsilon}$ such that $\mu'_{I^\co}(R_{a,c}) < 2 \varepsilon $ for all $a \in A'_0$ and $\mu'_{I^\co}(R_{a,c}) > (1-2 \varepsilon)$ for all $a \in A'_1$ (in this case we are applying it to the complement $R^{\co}$, which is also stable). As $\varepsilon < \frac{1}{4}$, it follows that in fact $A^0_{B,c, \varepsilon} = A'_0 \cap A, A^1_{B,c, \varepsilon} = A'_1 \cap A$.
\end{proof}

In particular, it makes sense to speak of the $\mu_I$-measure of $A^0_{B,c, \varepsilon}, A^1_{B,c, \varepsilon}$.

\begin{defn} \label{def: epsilon excellent} Let $0 < \varepsilon \leq \delta < \frac{1}{4}$ be arbitrary, and let $I \subseteq [k]$. We say that a set $A \in \mathcal{B}_{I}$
 is \emph{$(\varepsilon, \delta)$-excellent} if it is $\varepsilon$-good and for every $J \subseteq [k] \setminus I$, every $\delta$-good $B \in \mathcal{B}_{J}$
 and every $c \in V_{[k] \setminus (I \cup J)}$, either $\mu_{I}(A^0_{B,c, \delta}) < \varepsilon \mu_{I}(A)$ or $\mu_{I}(A^1_{B,c, \delta}) < \varepsilon \mu_{I}(A)$ (in the notation from Lemma \ref{lem: e-good definable pieces}).
\end{defn}

\begin{rem}\label{rem: exc incr eps}
Note that if $A$ is $(\varepsilon, \delta)$-excellent, then it is also $(\varepsilon', \delta')$-excellent for any $\varepsilon' > \varepsilon$ and $\varepsilon' \leq \delta' < \delta$. However, this need not be true if we take $\delta' > \delta$ since $\delta'$-good sets need not be $\delta$-good.
\end{rem}


The following lemma is a generalization of \cite[Claim 5.4]{ms}, with an additional observation that the proof can be performed ``definably'' and with respect to an arbitrary measure.

\begin{lem} \label{lem: finding excellent subset}

  Let $R \subseteq V_1 \times \ldots \times V_k$ be $d$-stable and let $0 < \varepsilon \leq \delta < \frac{1}{2^d}$ be arbitrary.  Let $n\in [k]$.
  Assume that $A \in \mathcal{B}_{n}$ and $\mu_n(A) > 0$. Then there is an $(\varepsilon, \delta)$-excellent $R$-definable set $A' \in \mathcal{B}_{n}$ with $\mu_n(A' \cap A) \geq  \varepsilon^d \mu_n(A)$. 

\end{lem}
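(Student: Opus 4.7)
My plan is to construct $A'$ by iterative refinement starting from $A_0 = A$: at stage $i$, if $A_i$ is $\varepsilon$-excellent, stop and declare $A' = A_i$; otherwise Definition \ref{def: epsilon excellent} furnishes witnessing data $(J_i, B_i, c_i)$ --- with $J_i \subseteq [k]\setminus n$, $B_i \in \mathcal{B}_{J_i}$ an $\varepsilon$-good set, and $c_i \in V_{[k]\setminus(n \cup J_i)}$ --- such that both $(A_i)^0_{B_i,c_i}$ and $(A_i)^1_{B_i,c_i}$ have $\mu_n$-measure at least $\varepsilon \mu_n(A_i)$. Since $\varepsilon < 1/4$, Lemma \ref{lem: e-good definable pieces} guarantees these two pieces lie in $\mathcal{B}_n$ and are $R$-definable. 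Setting $A_{i+1}$ to either of them yields $\mu_n(A_{i+1}) \geq \varepsilon \mu_n(A_i)$, so if the process halts at stage $i \leq d$ then $\mu_n(A') \geq \varepsilon^d \mu_n(A)$, and the whole task reduces to showing the iteration terminates within $d$ stages.

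For termination I would argue by contradiction via $d$-stability of $R$. Rather than a single chain, grow a full binary tree of $R$-definable subsets $(A_\nu)_{\nu \in 2^{\leq d}}$: take $A_\emptyset = A$, and at each non-excellent node $\nu$ pick a witness $(J_\nu, B_\nu, c_\nu)$ and let $A_{\nu\frown 0} = (A_\nu)^0_{B_\nu,c_\nu}$, $A_{\nu\frown 1} = (A_\nu)^1_{B_\nu,c_\nu}$. Assume no $A_\nu$ at depth $\leq d$ is excellent; then every leaf $A_\eta$, $\eta \in 2^d$, is $R$-definable with $\mu_n(A_\eta) \geq \varepsilon^d \mu_n(A) > 0$. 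I then aim to extract parameters $(b_\nu)_{\nu \in 2^{<d}}$ in $V_{n^\co}$ and points $(a_\eta)_{\eta \in 2^d}$ in $V_n$ witnessing the $d$-order property of $R$, contradicting $d$-stability.

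The main obstacle is this extraction: the witness $B_\nu$ is a \emph{set}, not a point, so I need to locate a single $b_\nu$ whose $R$-fiber separates $(A_\nu)^0_{B_\nu,c_\nu}$ from $(A_\nu)^1_{B_\nu,c_\nu}$. The $\varepsilon$-goodness of $B_\nu$ says that each candidate $b \in V_{J_\nu}$ has $\mu_{J_\nu}(R_{b,c_\nu} \cap B_\nu)$ close to either $0$ or $\mu_{J_\nu}(B_\nu)$, while elements of the two halves of $A_\nu$ are distinguished by the $\mu_{J_\nu}$-size of $R_{a,c_\nu}\cap B_\nu$; swapping the roles of the two variables via Fubini (Lemma \ref{lem:fubini}) should produce a single $b_\nu \in B_\nu$ (or in an appropriate fap approximation of $B_\nu$) whose fiber separates the two halves up to small error. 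Doing this uniformly across all $2^d$ branches and $d$ tree levels requires a union bound on error, and the slack $\varepsilon < 1/2^d$ is exactly what lets the cumulative error stay below the threshold needed to pick $a_\eta \in A_\eta$ matching the prescribed branch pattern.
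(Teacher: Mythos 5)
Your overall architecture (binary tree of $R$-definable subsets, each child of measure $\geq\varepsilon$ times its parent, contradiction with $d$-stability) matches the paper, but the step you yourself flag as "the main obstacle" --- extracting a single separating parameter $b_\nu$ from the witnessing set $B_\nu$ --- is resolved incorrectly, and this is a genuine gap. You propose to pick, at each internal node $\nu$, a single $b_\nu$ whose fiber separates the two halves $(A_\nu)^0_{B_\nu,c_\nu}$ and $(A_\nu)^1_{B_\nu,c_\nu}$ "up to small error," and then to union-bound the errors. The problem is that the error produced at a node $\nu$ of depth $j$ is small only relative to $\mu_n(A_{\nu\frown t})$, whereas the leaf $A_\eta$ in which you must eventually find $a_\eta$ can be smaller by a factor of $\varepsilon^{d-j-1}$. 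Thus the exceptional set contributed by $b_\nu$ inside $A_\eta$ is only bounded by $2\varepsilon\,\mu_n(A_{\nu\frown\eta(j)})\leq 2\varepsilon^{j+2-d}\mu_n(A_\eta)$, which for small $j$ and $d\geq 3$ vastly exceeds $\mu_n(A_\eta)$; the union bound does not close, and the slack $\varepsilon<2^{-d}$ does not rescue it. A secondary issue: failure of $\varepsilon$-excellence does not always furnish a triple $(J,B,c)$ --- the set may simply fail to be $\varepsilon$-good, in which case the split is by a single fiber $R_c$ (and the sets $A^0_{B,c},A^1_{B,c}$ of Lemma \ref{lem: e-good definable pieces} are not even defined, since they presuppose $B$ is $\varepsilon$-good). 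This case must be treated separately, though it is the easy one.

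The paper's proof avoids the error-accumulation entirely by reversing the order of choices. It first builds the whole tree and picks the points $a_\eta\in A_\eta$ at the leaves (possible since $\mu_n(A_\eta)\geq\varepsilon^d\mu_n(A)>0$); only \emph{then}, for each internal node $\nu$, does it produce a parameter $c_\nu$ separating the \emph{finitely many} chosen points exactly. Concretely, for each $a$ in the relevant finite sets $S^0\subseteq A^0_{B,c'}$, $S^1\subseteq A^1_{B,c'}$, the set of $b\in B$ that misclassify $a$ has $\mu_J$-measure at most $\varepsilon\mu_J(B)$ by the very definition of $A^0_{B,c'},A^1_{B,c'}$; since $|S^0|+|S^1|\leq 2^d<\tfrac{1}{\varepsilon}$, the union of these failure sets has measure strictly less than $\mu_J(B)$, so some $b'\in B$ classifies every chosen point correctly, and $c_\nu=b'\frown c'$ witnesses the order pattern with no error at all. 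This "points first, separators second" trick --- reducing an approximate separation of sets to an exact separation of at most $1/\varepsilon$ points --- is the key idea missing from your proposal.
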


\begin{proof}
We will need the following claim.

\textbf{Claim.}
	Assume that $0 < \varepsilon  \leq \delta <  \frac{1}{4}$ and $A \in \mathcal{B}_{n}$ is not $(\varepsilon, \delta)$-excellent. Then there are disjoint $A^0, A^1 \subseteq A$ with $A_i \in \mathcal{B}_{n}$ and $\mu(A_i) \geq \varepsilon \mu(A)$ for $i \in \{0,1\}$, and such that for any \emph{finite} $S^0 \subseteq A^0, S^1 \subseteq A^1$ with $|S^0| + |S^1| \leq \frac{1}{\delta}$ there is some $c \in V_{n^\co}$ such that $a \in R_c$ for all $a \in S^1$ and $a \notin R_c$ for all $a \in S^0$.

\textbf{Proof.} 
If $A$ is not $\varepsilon$-good,  there is some $c \in V_{n^\co}$ such that $\mu_n(A \cap R_{c}) \geq \varepsilon \mu_{n}(A)$ and $\mu_n(A \cap (R_c)^\co) \geq \varepsilon \mu_n(A)$. We let $A^1 = A \cap R_{c}$ and $A^0 = A \cap (R_{c})^\co$.

If $A$ is $\varepsilon$-good, as it is not $(\varepsilon, \delta)$-excellent, there are some $J \subseteq [k] \setminus \{ n \}$, some set $B \in \mathcal{B}_{J}$ which is $\delta$-good, and some $c'\in V_{[k] \setminus ({n} \cup J)}$ such that $A$ is a disjoint union of the sets $A^0 := A^0_{B,c', \delta}, A^1 := A^1_{B,c', \delta}$ (in the notation from Lemma \ref{lem: e-good definable pieces}) and $\mu_n(A^t) \geq \varepsilon \mu_n(A)$ for both $t \in \{0,1\}$.
Now given $S^0, S^1$ as in the claim, we have $\mu_J(B \cap R_{a,c'}) \leq \delta \mu_J(B)$ for all $a \in S^0$ and $\mu_J(B \cap (R_{a,c'})^\co) \leq \delta \mu_J(B)$ for all $a \in S^1$. Let $$B' = B \cap (\bigcup_{a \in S^0} R_{a,c'} \cup \bigcup_{a \in S^1} (R_{a,c'})^\co).$$
 As $|S^0| + |S^1| < \frac{1}{\delta}$, it follows that $\mu_J (B') <  \frac{1}{\delta} \delta \mu_J(B) = \mu_J(B)$. In particular there is some $b' \in B \setminus B'$, and taking $c = b' \frown c'$ satisfies the claim.

Assume now that the conclusion of the lemma fails. By induction we choose sets $(A_{\eta} : \eta \in 2^{\leq d})$ in $\mathcal{B}_{n}$ such that  $A_{\emptyset} = A$ and given $\eta \in 2^{<d}$, we take $A_{\eta \frown 0} := (A_\eta)^0, A_{\eta \frown 1} := (A_\eta)^1$ as given by the claim applied to $A_\eta$. For every $\eta \in 2^d$, pick some $a_\eta \in A_\eta$ (possible as $\mu_n(A_\eta) \geq \varepsilon^d \mu_n(A) > 0$). For every $\nu \in 2^{<d}$ there is some $c_\nu \in V_{n^\co}$ such that $a_\eta \in R_{c_\nu}$ if and only if $\nu \frown 1 \trianglelefteq \eta$ --  which gives contradiction to the $d$-stability of $R$. Namely we can take $c$ given by the claim for $A_\nu$ and $S^0 = \{ a_\eta : \eta \in 2^d, \nu \frown 0 \trianglelefteq \eta \}$,$S^1 = \{ a_\eta : \eta \in 2^d, \nu \frown 1 \trianglelefteq \eta \}$ (note that $|S^0| + |S^1| \leq 2^d < \frac{1}{\delta}$ by assumption).
\end{proof}

\begin{lem} \label{lem: stable partition} Let $R \subseteq V_1 \times \ldots \times V_k$ be $d$-stable, and let $0< \varepsilon \leq \delta < \frac{1}{2^d}$ be arbitrary. For any $n \in [k]$, there is a partition of $V_n$ into
  $(\varepsilon, \delta)$-excellent  sets from $\mathcal{B}_{n}$, and the size of the partition can be
  bounded by a polynomial of degree $d+1$ in $\frac{1}{\varepsilon}$. 
\end{lem}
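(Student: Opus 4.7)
The plan is a standard greedy exhaustion argument using Lemma \ref{lem: finding excellent subset}. I would inductively construct a sequence of pairwise disjoint $\varepsilon$-excellent sets $A_1, A_2, \ldots \in \mathcal{B}_n$ as follows. Set $R_1 := V_n$. At stage $i$, having already chosen $A_1, \ldots, A_{i-1}$, consider the remainder $R_i := V_n \setminus \bigcup_{j < i} A_j$, which lies in $\mathcal{B}_n$ since each $A_j$ does. If $\mu_n(R_i) \geq \varepsilon$, apply Lemma \ref{lem: finding excellent subset} to $R_i$ to obtain an $\varepsilon$-excellent set $A_i \subseteq R_i$ with
\[ \mu_n(A_i) \,\geq\, \varepsilon^d \, \mu_n(R_i) \,\geq\, \varepsilon^{d+1}. \]
Otherwise, stop.

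Since the $A_i$ are pairwise disjoint and $\mu_n$ is a probability measure, $\sum_i \mu_n(A_i) \leq 1$; combined with the uniform lower bound $\mu_n(A_i) \geq \varepsilon^{d+1}$ this forces the procedure to terminate after at most $N \leq \lfloor \varepsilon^{-(d+1)} \rfloor$ steps. Upon termination, the residual piece $R_{N+1}$ has measure $< \varepsilon$; adjoining it (either as one further exceptional piece or by absorbing it into one of the $A_i$, at the cost of weakening excellence on that single piece) yields the desired partition of $V_n$ whose size is $O((1/\varepsilon)^{d+1})$, i.e.\ bounded by a polynomial of degree $d+1$ in $\frac{1}{\varepsilon}$.

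The one point that needs checking — and which I expect to be the only real subtlety — is that the set $A_i$ produced by Lemma \ref{lem: finding excellent subset} applied to $R_i$ is $\varepsilon$-excellent \emph{in the ambient sense} of Definition \ref{def: epsilon excellent}, not merely with respect to the renormalized measure on $R_i$. This is immediate once one notes that excellence of a set $A' \in \mathcal{B}_n$ is formulated purely in terms of the \emph{ratios} $\mu_n(A' \cap R_b)/\mu_n(A')$ and $\mu_n(A'^0_{B,c})/\mu_n(A')$, $\mu_n(A'^1_{B,c})/\mu_n(A')$; these ratios are scale-invariant and depend only on the values of $\mu_n$ on subsets of $A'$, so any set witnessed to be excellent relative to $\mu_n \restriction R_i$ is automatically excellent relative to $\mu_n$ itself. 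Everything else (measurability of $R_i$, positivity of $\mu_n(R_i)$ so that Lemma \ref{lem: finding excellent subset} applies, and the arithmetic of the counting bound) is routine bookkeeping.
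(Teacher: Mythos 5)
Your greedy exhaustion is the same strategy as the paper's, and the scale-invariance point you flag as the ``only real subtlety'' is actually a non-issue: Lemma \ref{lem: finding excellent subset} is stated for an arbitrary $A \in \mathcal{B}_n$ of positive ambient measure and already produces a set that is $\varepsilon$-excellent in the sense of Definition \ref{def: epsilon excellent}, so no renormalization ever enters. The genuine gap is in the step you dismiss as routine bookkeeping, namely the disposal of the residual piece. Adding $R_{N+1}$ ``as one further exceptional piece'' is simply not permitted: the lemma asserts a partition into $\varepsilon$-excellent sets with no exceptional part (and this is exactly what is needed downstream to get $\Sigma = \emptyset$ in Theorem \ref{thm: abstract stable regularity}). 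Absorbing it into one of the $A_i$ also fails with your stopping rule: excellence of $A_i \cup R_{N+1}$ degrades by roughly the ratio $\mu_n(R_{N+1})/\mu_n(A_i)$, and you only know $\mu_n(R_{N+1}) < \varepsilon$ while the receiving piece may have measure as small as $\varepsilon^{d+1}$ (even $A_1$ is only guaranteed measure $\varepsilon^{d}$), so the junk can outweigh the host by a factor of order $\varepsilon^{-d}$ and destroy excellence entirely.

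The paper's fix is quantitative and you need some version of it: extract $\tfrac{\varepsilon}{2}$-excellent (not $\varepsilon$-excellent) pieces, and continue the iteration not until the remainder has absolute measure $< \varepsilon$ but until it satisfies $\mu_n(B_m) \leq \tfrac{\varepsilon}{2}\,\mu_n(A_1)$. Since each step removes at least a $(\tfrac{\varepsilon}{2})^{d}$-fraction of the current remainder, the remainder decays geometrically and this takes on the order of $(d+1)\,(\tfrac{2}{\varepsilon})^{d}\log\tfrac{2}{\varepsilon}$ steps, still $O\bigl((\tfrac{1}{\varepsilon})^{d+1}\bigr)$. Only then is $A_1 \cup B_m$ an $\varepsilon$-excellent set, because the added mass is at most an $\tfrac{\varepsilon}{2}$-fraction of $\mu_n(A_1)$, which combines with the $\tfrac{\varepsilon}{2}$-excellence of $A_1$ to give $\varepsilon$-excellence of the union. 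Without this adjustment of both the excellence parameter and the termination criterion, your argument does not yield the stated conclusion.
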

\begin{proof}
	Repeatedly applying Lemma \ref{lem: finding excellent subset}, we let $A_{m+1}$ be an $(\frac{\varepsilon}{2}, \delta)$-excellent subset of $B_m := V_n \setminus ( \bigcup_{1\leq i \leq m} A_i)$ with $\mu_n(A_{m+1}) \geq (\frac{\varepsilon}{2})^d \mu_n(B_m)$. Then $\mu_n(B_{m+1}) \leq \mu_n(B_{m}) - (\frac{\varepsilon}{2})^d \mu_n(B_{m}) \leq (1 - (\frac{\varepsilon}{2})^d) \mu_n(B_m)$, hence $\mu_n(B_m) \leq (1 - (\frac{\varepsilon}{2})^d)^{m}$ for all $m$. Hence we have $\mu_n(B_m) \leq \frac{\varepsilon}{2} \mu_n(A_1)$ assuming $(1 - (\frac{\varepsilon}{2})^d)^{m} \leq (\frac{\varepsilon}{2})^{d+1}$. In this case, letting $A'_1 = A_1 \cup B_m$, it is easy to check that $A'_1$ is an $(\varepsilon, \delta)$-excellent set, and $A'_1, A_2, \ldots, A_m$ is a partition of $V_n$.

	Finally, for the size of the partition, we have $ \left(1-\left( \frac{\varepsilon}{2} \right)^d \right)^m \leq \left(\frac{\varepsilon}{2}\right)^{d+1} \iff m \ln \left( 1 - \left(\frac{\varepsilon}{2} \right)^d \right) \leq (d+1) \ln \frac{\varepsilon}{2}$, and taking Taylor expansion this inequality holds provided $-m \left(\frac{\varepsilon}{2} \right)^d \leq - (d+1) \frac{1}{\left(\frac{\varepsilon}{2} \right)}$. Hence we can take $m \leq c \left( \frac{1}{\varepsilon}\right)^{d+1}$, for some $c= c(d)$.	
\end{proof}

Finally we can use the partition in Lemma \ref{lem: stable partition} to obtain a regular partition for $R\subseteq V_1\tttimes V_k$. 
\begin{lem} \label{lem: product is e-good}
Let $0 < \varepsilon \leq \delta < \frac{1}{2^d}$ be arbitrary. If $A\subseteq V_n$ is $(\varepsilon, \delta)$-excellent and $B\subseteq
V_{[n-1]}$ is $\delta$-good then $B\ttimes A$ is $(\varepsilon + \delta)$-good.  
\end{lem}
\begin{proof}
  Let $c \in V_{[n]^\co}$ be arbitrary. As $B$ is $\delta$-good and $A$ is $(\varepsilon, \delta)$-excellent, by Definition \ref{def: epsilon excellent} we have $A = A^0_{B,c, \delta} \cup A^1_{B,c, \delta}$ and either $\mu_{n}(A^0_{B,c, \delta}) < \varepsilon \mu_{n}(A)$ or $\mu_{n}(A^1_{B,c, \delta}) < \varepsilon \mu_{n}(A)$. Assume we are in the first case. Then, using the definition of $\mu_{[n]}$ and Lemma \ref{lem:fubini}, we have 
  $$ \mu_{[n]}((B \times A) \cap R_c) = \int_{A} \left( \mu_{[n-1]}(R_{a,c} \cap B) \right) d\mu_{n} \geq $$
  $$ \int_{A^1_{B,c, \delta}}  \left( \mu_{[n-1]}(R_{a,c} \cap B) \right) d\mu_n  \geq \int_{A^1_{B,c, \delta}} (1-\delta) \mu_{[n-1]}(B) d\mu_n \geq $$
  $$(1- \varepsilon) \mu_n(A) (1 - \delta) \mu_{[n-1]}(B) > (1- (\varepsilon + \delta))\mu_{[n]}(A \times B).$$
  Similarly, in the second case we obtain that $\mu_{[n]}((B \times A) \cap R_c) \leq (\varepsilon + \delta) \mu_{[n]}(A \times B)$.
\end{proof}

\begin{thm} \label{thm: abstract stable regularity}
	Let $R \subseteq V_1 \times \ldots \times V_k$ be $d$-stable, and let $0< \varepsilon < \frac{1}{2^d}$ be arbitrary. Then there is an $R$-definable
$\varepsilon$-regular partition $\vec\CP$ of $V_1 \times \ldots \times V_k$ with $0\mh1$-densities (see Definition \ref{defn: e-regular partion with dens 0-1}) without any bad $k$-tuples in the partition (i.e. $\Sigma = \emptyset$) and such that the size of the partition $\| \vec\CP \|$ is
  bounded by a polynomial of degree $d+1$ in $\frac{1}{\varepsilon}$. 
\end{thm}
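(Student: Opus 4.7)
The plan is to partition each $V_m$ separately using Lemma~\ref{lem: stable partition} and take the product rectangular partition $\vec\CP = (\mathcal{P}_1, \ldots, \mathcal{P}_k)$. The guiding observation is that the relation $R \subseteq V_1 \times \ldots \times V_k$ has no free parameter coordinates: viewed as a binary relation on $V_{[k]} \times V_\emptyset$, its only fiber is $R$ itself, so a cell $X = X_1 \times \ldots \times X_k$ being $\delta$-good in the sense of Definition~\ref{def: epsilon good} simply says that $\mu(R \cap X)/\mu(X)$ lies within $\delta$ of $0$ or $1$. By monotonicity of $\mu$, whichever side this sits on, every sub-box $Y = Y_1 \times \ldots \times Y_k \subseteq X$ inherits it with the same absolute error $\delta \mu(X)$, so the sub-box clauses of Definition~\ref{defn: e-regular partion with dens 0-1} follow for free from cell-wise $\varepsilon$-goodness, and we can take $\Sigma = \emptyset$.

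To make each cell $\varepsilon$-good, we iteratively apply Lemma~\ref{lem: product is e-good}: if $Y_{m-1} := X_1 \times \ldots \times X_{m-1}$ is $\delta$-good and $X_m$ is $\delta$-excellent, then $Y_m := Y_{m-1} \times X_m$ is $2\delta$-good. Each such application doubles the threshold, so we set $\varepsilon_0 := \varepsilon/2^{k+1}$ and apply Lemma~\ref{lem: stable partition} to each $V_m$ with the geometrically escalating threshold $\delta_m := 2^{m-1}\varepsilon_0$, obtaining an $R$-definable partition $\mathcal{P}_m$ of $V_m$ into $\delta_m$-excellent sets. The hypothesis $\delta_m < 1/2^d$ of Lemma~\ref{lem: stable partition} is met since $\delta_m \leq \delta_k = \varepsilon/4 < 1/2^{d+2}$, and the size bound $|\mathcal{P}_m| \leq c(d)(1/\delta_m)^{d+1}$ is worst at $m=1$, giving $\|\vec\CP\| = O((1/\varepsilon)^{d+1})$ with the hidden constant depending only on $k$ and $d$.

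For a fixed cell $X_1 \times \ldots \times X_k$, we then induct on $m$ to show that $Y_m$ is $\gamma_m$-good with $\gamma_m \leq 2\delta_m$. The base case $m=1$ is immediate from excellence of $X_1$. At the inductive step, $\gamma_{m-1} \leq 2\delta_{m-1} = \delta_m$ means $Y_{m-1}$ is in particular $\delta_m$-good, so Lemma~\ref{lem: product is e-good} applied with the common threshold $\delta_m$ (matched by $X_m$'s $\delta_m$-excellence) yields $Y_m$ is $2\delta_m$-good. At $m=k$ this gives $\gamma_k \leq 2\delta_k = \varepsilon/2 < \varepsilon$, which together with the monotonicity observation of the first paragraph shows that every cell satisfies the strict $\varepsilon$-regular condition of Definition~\ref{defn: e-regular partion with dens 0-1}. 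The main subtlety lies in this threshold bookkeeping: the hypothesis of Lemma~\ref{lem: product is e-good} requires the cumulative product and the next excellent piece to share a common threshold, forcing the excellence thresholds $\delta_m$ to grow geometrically so as to absorb the doubling at each step.
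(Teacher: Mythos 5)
Your proof is correct, and its skeleton is the one the paper uses: partition each $V_m$ into excellent pieces via Lemma~\ref{lem: stable partition}, form the product rectangular partition, and iterate Lemma~\ref{lem: product is e-good} across the coordinates. You diverge in two places, both of which are genuine improvements rather than mere restylings. First, the endgame: the paper stops the iteration at $X' = X_1\times\cdots\times X_{k-1}$ being $\tfrac{\varepsilon}{2}$-good, splits $X_k$ into $(X_k)^0_{X'}\cup(X_k)^1_{X'}$, and bounds $\mu_{[k]}(R\cap Y)$ for a sub-box $Y$ by an explicit Fubini-type integral estimate; you instead apply Lemma~\ref{lem: product is e-good} one more time to conclude that the entire cell $X_1\times\cdots\times X_k$ is $\tfrac{\varepsilon}{2}$-good with respect to the parameter-free relation $R$ itself (the $I=[k]$ instance of Definition~\ref{def: epsilon good}), after which the sub-box clauses of Definition~\ref{defn: e-regular partion with dens 0-1} follow from plain monotonicity of $\mu$. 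This is shorter and avoids the integral computation entirely. Second, the threshold bookkeeping: the paper takes every piece $\tfrac{\varepsilon}{2^{k+1}}$-excellent and then treats $X_k$ as $\tfrac{\varepsilon}{2}$-excellent, implicitly invoking that $\delta$-excellence implies $\delta'$-excellence for $\delta'>\delta$ --- which is not immediate from Definition~\ref{def: epsilon excellent}, since enlarging the threshold enlarges the family of $\delta'$-good witnesses $B$ over which one must quantify (goodness is upward monotone in the threshold, excellence need not be). Your geometrically escalating thresholds $\delta_m = 2^{m-1}\varepsilon/2^{k+1}$ sidestep this: at each step the cumulative product is $\delta_m$-good and $X_m$ is $\delta_m$-excellent at exactly the common threshold that Lemma~\ref{lem: product is e-good} requires. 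The only cost is a constant factor in the size bound; the degree $d+1$ in $\tfrac{1}{\varepsilon}$ is unchanged.
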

\begin{proof}
	For each $n \leq k$, let $\CP_n$ be a partition of $V_n$ into $(\frac{\varepsilon}{2^{k+1}}, \frac{\varepsilon}{2})$-excellent sets as given by Lemma \ref{lem: stable partition} (in particular, $\CP_n$ has size polynomial in $\frac{1}{\varepsilon}$ as $k$ is fixed), and let $\CP := \{X_1 \times \ldots \times X_k : X_n \in \CP_n \}$.
	We claim that $\CP$ is $\varepsilon$-regular with $\Sigma = \emptyset$.
	Indeed, let $X = X_1 \times \ldots \times X_k \in \CP$ be arbitrary, and let  $X' := X_1 \times \ldots \times X_{k-1}$. Applying Lemma \ref{lem: product is e-good} $k$ times, the set $X'$ is $\frac{\varepsilon}{2}$-good, and $X_k$ is $(\frac{\varepsilon}{2}, \frac{\varepsilon}{2})$-excellent (by construction and Remark \ref{rem: exc incr eps}). Then, by Definition \ref{def: epsilon excellent}, $X_k$ is a disjoint union of the sets $X_k^0 := (X_k)^0_{X', \frac{\varepsilon}{2}}, X_k^1 := (X_k)^1_{X', \frac{\varepsilon}{2}} \in \mathcal{B}_{k}$ and $\mu_k(X_k^t) < \frac{\varepsilon}{2} \mu_k (X_k)$ for one of $t \in \{0,1\}$. 
	We have 
	$$\mu_{[k]}(R \cap X) = \int_{X_k} \mu_{[k-1]}(R_c\cap X') d \mu_k(c).$$
	As $X_k$ is a disjoint union of $X^0_k, X^1_k$ and $\mu(X^t_k) \leq \frac{\varepsilon}{2}  \mu_k(X_k)$ for some $t \in \{0,1\}$, we have
	$$ \left|\mu_{[k]}(R \cap X) - \int_{X^t_k} \mu_{[k-1]}(R_c\cap X') d \mu_k(c) \right| \leq $$
	$$ \frac{\varepsilon}{2} \mu_k (X_k) \mu_{[k-1]}(X_1 \times \ldots X_{k-1}) \leq \frac{\varepsilon}{2} \mu_{[k]}(X_1 \times \ldots \times X_k)$$
	for some $t \in \{0,1\}$.

	Assume that $t = 0$. Then for all $c \in X^0_k$ we have $\mu_{[k-1]}(R_c \cap X') < \frac{\varepsilon}{2} \mu_{[k-1]}(X')$. Hence 
	$$ \int_{X^0_k} \mu_{[k-1]}(R_c\cap X') d \mu_k(c) \leq \mu(X^0_k) \frac{\varepsilon}{2} \mu_{[k-1]}(X') \leq 
 \frac{\varepsilon}{2} \mu_{[k]}(X_1 \times \ldots \times X_k),$$
	and so $\mu_{[k]}(R \cap X) \leq \varepsilon \mu_{[k]}(X)$.

	If $t = 1$, applying the same argument to the complement $R^\co$ we obtain $\mu_{[k]}(R^\co \cap X) \leq  \varepsilon \mu_{[k]}(X)$, hence $| \mu_{[k]}(R^\co \cap X) - \mu_{[k]}(X)| \leq \varepsilon \mu_{[k]}(X)$.
\end{proof}


Similarly to Corollary \ref{cor: reg in NIP}, Theorem \ref{thm: abstract stable regularity} gives the following in the definable case. Recall that a structure $\CM$ is \emph{stable} if every binary relation definable in it is stable.

\begin{cor} \label{cor: definable stable regularity}
 Let $\CM = (M, \ldots)$ be a stable structure and $k \geq 2$. For every definable $E\left(x_{1},\ldots,x_{k}\right)$ 
there is some $c=c\left(E\right)$ such that: for any $\varepsilon>0$
and any Keisler measures $\mu_{i}$ on $M^{|x_{i}|}$ there are partitions
$M^{|x_{i}|}=\bigcup_{j<K}A_{i,j}$ satisfying:
\begin{enumerate}
\item $K\leq\left(\frac{1}{\varepsilon}\right)^{c}$;
\item for all $\vec{i} = \left(i_{1},\ldots,i_{k}\right)\in [K]^{k}$
 we have $$|\mu(E \cap (A_{1,i_1} \times \ldots \times A_{k,i_k})) - \delta_{\vec{i}}\mu(A_{1,i_1} \times \ldots \times A_{k,i_k})| < \varepsilon \mu(A_{1, i_1} \times \ldots \times A_{k, i_k})$$ for some $\delta_{\vec{i}} \in \{ 0,1\}$ (where where $\mu=\mu_{1}\otimes\ldots\otimes\mu_{k}$);
\item each $A_{i,j}$ is defined by an instance of an $E$-formula, with this formula depending
only on $E$ and $\varepsilon$.
\end{enumerate}
\end{cor}

\subsection{Distal case} \label{sec: distal case}
The class of distal theories is defined and studied in
\cite{simon2013distal}, with the aim to isolate the class of purely unstable NIP theories (as opposed to the
class of stable theories, see also \cite{simon2015guide}).
For completeness of the exposition, we recall the distal regularity lemma established in \cite{distal}, pointing out a stronger form of definability for the regular partition than the one stated there. First we recall the definition of distality (and refer to the introduction in \cite{distal} for more details).
\begin{defn} \cite{distal} \label{def: distal}
	An NIP structure $\CM$
is \emph{distal} if and only if for
every definable family $\left\{ \phi\left(x,b\right):b\in M^{d}\right\} $ of subsets of $M^{|x|}$
 there is some $t \in \mathbb{N}$ and a definable family $\left\{ \psi\left(x,c\right):c\in M^{td}\right\} $ 
such that for every $a\in M$ and every finite set $B\subset M^{d}$
there is some $c\in B^{t}$ such that $a\in\psi\left(x,c\right)$
and for every $a'\in\psi\left(x,c\right)$ we have $a'\in\phi\left(x,b\right)\Leftrightarrow a\in\phi\left(x,b\right)$,
for all $b\in B$.
\end{defn}

\begin{thm} \label{thm: distal regularity}
Let $\CM$ be distal and $k \geq 2$. For every definable $E\left(x_{1},\ldots,x_{k}\right)$, defined by an instance of some formula $\theta(x_1, \ldots, x_k; z)$, 
there is some $c=c\left( \theta \right)$ such that: for any $\varepsilon>0$
and any generically stable\textcolor{red}{{} }Keisler measures $\mu_{i}$
on $M^{|x_{i}|}$ there are partitions $M^{|x_i|}=\bigcup_{j<K}A_{i,j}$
and a set $\Sigma\subseteq\left\{ 1,\ldots,K\right\} ^{k}$ such that
\begin{enumerate}
\item $K\leq\left(\frac{1}{\varepsilon}\right)^{c}$.
\item $\mu\left(\bigcup_{\left(i_{1},\ldots,i_{k}\right)\in\Sigma}A_{1,i_{1}}\times\ldots\times A_{k,i_{k}}\right) \leq \varepsilon$,
where $\mu=\mu_{1}\otimes\ldots\otimes\mu_{k}$.
\item for all $\left(i_{1},\ldots,i_{k}\right)\notin \Sigma$, either $\left(A_{1,i_{1}}\times\ldots\times A_{k,i_{k}}\right)\cap E=\emptyset$
or $A_{1,i_{1}}\times\ldots\times A_{k,i_{k}}\subseteq E$.
\item Each $A_{i,j}$ is defined by an instance of a formula $\psi_{i}\left(x_{i},z_i \right)$
which only depends on $\theta$ (and \textbf{not on $\varepsilon$}!).
\end{enumerate}
\end{thm}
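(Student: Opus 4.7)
The plan is to combine the distal cell decomposition, applied once to the single formula $\theta$ to produce formulas $\psi_i(x_i;z_i)$ that will serve to define the pieces of the partition, with the VC-based approximation machinery of Section~\ref{sec:setting} to pin down a small finite parameter set over which the decomposition is evaluated. Crucially, the $\psi_i$ come from distality alone and thus depend only on $\theta$, while only the number of parameter instances used depends on $\varepsilon$.

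\textbf{Extraction of $\psi_i$.} For each $i \in [n]$, regard $\theta(x_1,\dotsc,x_n;z)$ as a definable family $\phi_i(x_i;y_i)$ of subsets of $M^{|x_i|}$, with $y_i := (x_1,\dotsc,\widehat{x_i},\dotsc,x_n,z)$. Apply Definition~\ref{def: distal} to obtain $\psi_i(x_i;z_i)$ and a constant $k_i \in \NN$, both depending only on $\theta$, such that for every finite $B \subseteq M^{|y_i|}$ and every $a \in M^{|x_i|}$ there is some $c \in B^{k_i}$ with $a \in \psi_i(M,c)$ and such that $\psi_i(M,c)$ is $\phi_i$-homogeneous over $B$ (every $a' \in \psi_i(M,c)$ has the same $\phi_i$-type over $B$ as $a$).

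\textbf{Construction of the partition.} Since $\CM$ is distal, in particular NIP, the $\phi_i$-definable subsets of $M^{|y_i|}$ have VC-dimension bounded by some $d = d(\theta)$. For each $i$, apply the finite-VC part of Proposition~\ref{prop:delta-approx} to the generically stable product measure on $M^{|y_i|}$ induced by the $\mu_j$ for $j \neq i$ (and an arbitrary generically stable measure on $M^{|z|}$), at approximation parameter $\varepsilon/n$: this produces a finite set $D_i \subseteq M^{|y_i|}$ of size polynomial in $1/\varepsilon$ such that any two $a,a' \in M^{|x_i|}$ agreeing on the $\phi_i$-type over $D_i$ satisfy $\mu(\phi_i(a,M^{|y_i|}) \triangle \phi_i(a',M^{|y_i|})) < \varepsilon/n$. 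Let $\CP_i$ be the partition of $M^{|x_i|}$ into atoms of the Boolean algebra generated by $\{\psi_i(M,c) : c \in D_i^{k_i}\}$; by Fact~\ref{fac: SauerShelah}, $|\CP_i|$ is polynomial in $1/\varepsilon$, and each atom is the intersection of (and hence definable by a single instance of a formula built from) $\psi_i$-instances, giving a single $\psi'_i(x_i;z_i')$ depending only on $\theta$.

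\textbf{Bounding $\Sigma$ and the main obstacle.} Declare $(i_1,\dotsc,i_n) \in \Sigma$ exactly when the product cell $A_{1,i_1} \times \dotsb \times A_{n,i_n}$ is not $E$-homogeneous. If it is not, then by a telescoping argument along the coordinates there exist two points in the cell differing in only the $i$-th coordinate which are separated by $E$; distal homogeneity of the corresponding $\psi_i$-cell over $D_i$ forces the separating parameter (the remaining coordinates together with the defining parameter of $E$) to lie outside $D_i$, so by the $\varepsilon/n$-approximation of Step~2 such coordinates contribute at most $\varepsilon/n$ to the measure, and summing over $i$ via iterated Fubini (Lemma~\ref{lem:fubini}) yields $\mu(\bigcup_\Sigma A_{1,i_1} \times \dotsb \times A_{n,i_n}) \leq \varepsilon$. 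The main obstacle is precisely this last step: distal homogeneity is a Boolean property of single cells, and translating it into a quantitative bound on $\bigcup_\Sigma$ requires integrating carefully over the "parameter" variables coordinate by coordinate using the fap property, which is exactly what the finite-VC form of Proposition~\ref{prop:delta-approx} provides. Tracking the constants through the distal parameter $k$, the VC-dimension $d$, and the $n$-fold iteration yields the polynomial bound $K \leq (1/\varepsilon)^{c(\theta)}$ of clause~(1), while clauses~(3)--(4) are immediate from the construction.
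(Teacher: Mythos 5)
Your approach is genuinely different from the paper's: the paper simply invokes the distal regularity lemma of \cite[Section 5.2]{distal} (proved there by iterating the definable strong Erd\H{o}s--Hajnal property, Corollary 4.6 of \cite{distal}) for clauses (1)--(3), and only supplies a new argument for clause (4), refining the resulting partition into cells each isolated by a single instance of a fixed formula via one more application of distality. You instead attempt a self-contained construction: distal cell decomposition evaluated over a finite parameter set $D_i$ supplied by Proposition \ref{prop:delta-approx}. That is a reasonable-looking plan, but it has a genuine gap at exactly the step you flag as the main obstacle.

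The problem is the bound on $\Sigma$. For a non-homogeneous box $A_{1,i_1}\times\dotsb\times A_{n,i_n}$, your telescoping argument produces $a,a'\in A_{i,i_i}$ and a separating parameter $b$ in the product of the remaining cells, and homogeneity over $D_i$ correctly forces $b\notin D_i$. But the quantitative input you then invoke --- that $a,a'$ lying in the same cell gives $\mu(E_a\,\Delta\,E_{a'})<\varepsilon/n$ --- only bounds the measure of the set of \emph{separating witnesses} $b$ for that pair; it says nothing about the measure of the box itself. A box is declared bad as soon as one such witness exists, and a box of product measure close to $1$ can be bad because of a witness set of arbitrarily small (even zero) measure. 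So $\sum_{\vec\imath\in\Sigma}\mu(A_{1,i_1}\times\dotsb\times A_{n,i_n})\leq\varepsilon$ does not follow from the stated ingredients; indeed, what you have proved is essentially the conclusion of Theorem \ref{thm:main1} ($\varepsilon$-regularity with $0$-$1$ densities), whereas clause (3) demands exact homogeneity on the good boxes, and upgrading ``density within $\varepsilon$ of $0$ or $1$'' to ``empty or full'' is precisely where the full distal machinery (strong Erd\H{o}s--Hajnal plus iteration, as in \cite{distal}) is needed. A secondary gap: your pieces are atoms of the Boolean algebra generated by $|D_i|^{k_i}$ instances of $\psi_i$, a number growing with $1/\varepsilon$, so they are Boolean combinations of unboundedly many instances and are not, without further argument, instances of a single formula depending only on $\theta$; this is exactly the point the paper's proof addresses by using distality to isolate each complete $\Delta_i$-type over the parameter set by a single instance of a fixed formula.
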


\begin{proof}
This is proved in \cite[Section 5.2]{distal}, except for the fact that in (4) the formulas $\psi_{i}\left(x_{i},z_i \right)$ can be chosen independently of $\varepsilon$ --- and we explain how to modify the proof there to obtain it. Namely, the proof of \cite[Proposition 5.3]{distal} shows that, under the assumptions of the lemma, for each $i=1, \ldots, k$ we can find a finite set of formulas $\Delta_i$ and a constant $c \in \mathbb{N}$ depending \emph{only on $\theta$} (in view of \cite[Corollary 4.6]{distal}), a finite set of parameters $A_N$ depending on $\theta$ and $\varepsilon$ with $|A_N| \leq \left( \frac{1}{\varepsilon} \right)^c$, and partitions $\mathcal{P}_i = \{ A_{i,j} : j < K \}$ of $M^{|x_i|}$ satisfying the conclusion of the lemma, except for the bold font part, such that each $A_{i,j}$ is $\Delta_i$-definable over $A_N$. 

Let $\mathcal{Q}_i$ be a partition of $M^{|x_i|}$ into the sets of realizations of complete $\Delta_i$-types over $A_N$. By distality of $\CM$, let $\Delta'_i$ be a finite set of formulas such that for every $\phi \in \Delta_i$ it contains a formula $\psi$ as in Definition \ref{def: distal}. Let $\psi_i(x_i, z_i)$ be a conjunction of all formulas in $\Delta'_i$. Then for every $a \in M^{|x_i|}$ there is a single instance $\psi_i(x_i, e)$ such that its parameters $e$ are all from $A_N$ and such that $\psi_i(x_i,e)$ isolates the complete $\Delta_i$-type of $a$ over $A_N$. Using this, we can choose a partition $\mathcal{Q}'_i$ of $M^{|x_i|}$ which refines $\mathcal{Q}_i$ (and so also refines $\mathcal{P}_i$) and such that every set in $\mathcal{Q}'_i$ is defined by an instance of $\phi_i(x_i,z_i)$ over $A_N$. Then the size of $\mathcal{Q}'_i$ is bounded by $|A_N|^{|z_i|} \leq \left( \frac{1}{\varepsilon} \right)^{c'}$ where $c' = c |z_i|$ only depends on $\theta$. Hence $\mathcal{Q}'_i, i=1, \ldots, k$ give the desired partition.
\end{proof}

\section{Definable variants of the Erd\H os-Hajnal and R\"odl theorems} \label{sec: large homog set}

In this section, we are concerned with the question of finding a ``large'' ``approximately homogeneous'' definable subset of a definable hypergraph. ``Large'' here refers to positive measure, relatively to a finitely approximable measure, and ``approximately homogeneous'' means that the edge density on the set is close to $0$ or $1$ (see below for precise definitions). We consider two very different situations --- ($k$-partite) $k$-hypergraphs and $k$-uniform hypergraphs (in the sense of Section \ref{sec:regul-lemm-hypergr}).

\subsection{Partitioned hypergraphs}
First we consider the ``partite'' situation. We are working in the same setting as in Section \ref{sec:regul-lemm-hypergr}.

\begin{thm} \label{thm: density approx EH}
	Let $E \subseteq V_1 \times \ldots \times V_k$ be a $k$-hypergraph of VC-dimension at most $d$. Then for every $\alpha, \varepsilon > 0$ there is some $\delta = \delta(k,d, \alpha, \varepsilon) > 0$ such that the following holds.

	Let $\CB_i$ be a field on $V_i$, and let $\mu_i$ be a measure on $\CB_i$ which is finitely approximable on $E$, for $i=1, \ldots, k$. Let $\mu=\mu_1\llltimes \mu_k$.
	Assume that $\mu(E) \geq  \alpha$. Then there are some $E$-definable sets $A_i \subseteq V_i$ such that $\mu_i(A_i) > \delta$ for all $i=1, \ldots, k$ and $d_E(A_1, \ldots, A_k) > 1-\varepsilon$ 
	(where $d_E(A_1, \ldots, A_k) = \frac{\mu \left(E \cap (A_1 \times \ldots \times A_k) \right)}{\mu_1(A_1) \cdot \ldots \cdot \mu_k(A_k)}$ denotes the $E$-density).

\end{thm}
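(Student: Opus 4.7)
The plan is to invoke the definable regularity lemma from Theorem \ref{thm:main1} with a sufficiently small parameter $\varepsilon'$ and then apply a pigeonhole argument to locate a single box with large-measure sides on which $E$ has density close to $1$.

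First, set $\varepsilon' := \min\{\varepsilon, \alpha/4\}$ and apply Theorem \ref{thm:main1} to produce an $E$-definable rectangular partition $\vec{\CP}=(\CP_1,\ldots,\CP_k)$ of $V_1 \times \ldots \times V_k$ that is $\varepsilon'$-regular with $0\mh1$-densities, with exceptional set $\Sigma \subseteq \vec{\CP}$ and $\|\vec{\CP}\| \leq N$, where $N = N(d,k,\varepsilon') := C_d(C_{k,d})^d (1/\varepsilon')^{2(k-1)d^2}$; in particular the total number of boxes in $\vec{\CP}$ is at most $N^k$. Split the non-exceptional boxes $\vec{\CP} \setminus \Sigma$ into ``empty'' boxes $\CD_0$ and ``full'' boxes $\CD_1$ according to which of the two alternatives in Definition \ref{defn: e-regular partion with dens 0-1} holds.

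Next, I bound $\mu(E)$ by the contributions of the three classes. The exceptional boxes contribute at most $\sum_{X \in \Sigma}\mu(X) \leq \varepsilon'$. For each $X = X_1 \times \ldots \times X_k \in \CD_0$, applying the density-$0$ clause with $Y_i = X_i$ gives $\mu(E \cap X) < \varepsilon'\mu(X)$, so the total contribution from $\CD_0$ is at most $\varepsilon'$. Consequently
\[
\sum_{X \in \CD_1}\mu(X) \;\geq\; \mu(E) - 2\varepsilon' \;\geq\; \alpha/2.
\]

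Finally, choose $\delta := \alpha/(2 N^k)$, which depends only on $d, k, \alpha, \varepsilon$. Suppose for contradiction that every box $X = X_1 \times \ldots \times X_k \in \CD_1$ satisfies $\mu_i(X_i) < \delta$ for some $i$. Then $\mu(X) = \prod_i \mu_i(X_i) < \delta$, so $\sum_{X \in \CD_1}\mu(X) < N^k \cdot \delta = \alpha/2$, contradicting the previous estimate. Hence there exists $X^* = X_1^* \times \ldots \times X_k^* \in \CD_1$ with $\mu_i(X_i^*) \geq \delta$ for every $i$. Setting $A_i := X_i^*$, the sets $A_i$ are $E$-definable by construction, and applying the density-$1$ clause to $X^*$ with $Y_i = X_i^*$ yields $\mu(X^*) - \mu(E \cap X^*) < \varepsilon' \mu(X^*)$, so $d_E(A_1,\ldots,A_k) > 1 - \varepsilon' \geq 1 - \varepsilon$, as required. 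The only ``obstacle'' is bookkeeping: tracking how $\delta$ depends on $d, \alpha, \varepsilon$ (and $k$) through the explicit constants $C_d, C_{k,d}$ and the exponent $2(k-1)d^2$ provided by Theorem \ref{thm:main1}.
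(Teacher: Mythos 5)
Your proof is correct and follows essentially the same route as the paper: apply Theorem \ref{thm:main1} with $\varepsilon'$ comparable to $\min\{\alpha,\varepsilon\}$, discard the exceptional and low-density boxes (which carry at most $O(\varepsilon')$ of the mass of $E$), and pigeonhole among the at most $N^k$ remaining high-density boxes to find one whose sides all have measure at least $\alpha/(2N^k)$. The paper organizes the same count as a single proof by contradiction on $\mu(E)$, but the underlying estimate and the resulting $\delta$ are the same; the only cosmetic point is to shrink $\delta$ slightly if you want the strict inequality $\mu_i(A_i)>\delta$.
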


\begin{proof}

This follows from the regularity lemma for NIP hypergraphs (Theorem \ref{thm:main1}). Let $\alpha, \varepsilon >0$ and $d \in \mathbb{N}$ be given. Let $\varepsilon' = \frac{\min\{ \alpha, \varepsilon \} }{4} > 0$. By Theorem \ref{thm:main1} there exist $c_1, c_2 \in \mathbb{R}$ depending only on $k,d$, $E$-definable partitions $V_i = \bigcup_{j=1, \ldots, n} A_{i,j}$ for each $i = 1, \ldots, k$ with $n \leq c_1 (\frac{1}{\varepsilon '})^{c_2}$, $\delta_{\vec{j}} \in \{0,1\}$ for each $\vec{j} \in [n]^k$ and $\Sigma \subseteq [n]^k$ such that 
 $\sum_{(j_1, \ldots, j_k) \in \Sigma} \mu_1(A_{1, j_1}) \cdot \ldots \cdot  \mu_k(A_{k, j_k}) < \varepsilon'$ and
 \begin{gather*}
 	\left \lvert \mu\left( E \cap (A_{1,j_1} \times \ldots \times A_{k,j_k}) \right) - \delta_{\vec{j}} \mu \left( A_{1,j_1} \times \ldots \times A_{k,j_k}) \right) \right \rvert \\
 	< \varepsilon'  \mu \left( A_{1,j_1} \times \ldots \times A_{k,j_k} \right)
 \end{gather*}
for all $\vec{j} = (j_1, \ldots, j_k) \in [n]^k \setminus \Sigma$. In particular, if $\vec{j} \in [n]^k \setminus \Sigma $, $\delta_{\vec{j}} = 1$ and $\mu(A_{1,j_1} \times \ldots \times A_{k,j_k}) > 0$ then $d_E(A_{1,j_1}, \ldots, A_{k,j_k}) > 1-\varepsilon' > 1 - \varepsilon$. 

Let $\delta := \frac{\varepsilon'}{c_1^k (\frac{1}{\varepsilon'})^{k c_2}} > 0$, it only depends on $k,d, \alpha, \varepsilon$. To prove the theorem, it is thus sufficient to show that there exists $\vec{j} \in [n]^k \setminus \Sigma$ so that $\delta_{\vec{j}} = 1$ and $\mu(A_{1,j_1} \times \ldots \times A_{k,j_k}) > \delta$ (which automatically implies $\mu_i(A_{i,j_i}) > \delta$ for all $i \in [k]$ as each $\mu_i$ takes values in $[0,1]$).
Assume that this fails. Then we have: 
\begin{gather*}
	\mu(E) = 
	 \sum_{\vec{j} \in [n]^k} \mu(E \cap (A_{1, j_1} \times \ldots \times A_{k, j_k})) \leq \\
	 \sum_{\vec{j} \in \Sigma} \mu (A_{1,j_1} \times \ldots \times A_{k,j_k}) + \\
	 \sum_{\vec{j} \in [n]^k \setminus \Sigma, \  \mu(A_{1, j_1} \times \ldots \times A_{k,j_k}) \leq \delta} \mu(A_{1,j_1} \times \ldots \times A_{k,j_k}) + \\
	 \sum_{\vec{j} \in [n]^k \setminus \Sigma,  \ \mu(A_{1,j_1} \times \ldots \times A_{k,j_k}) > \delta} \mu(A_{1,j_1} \times \ldots \times A_{k,j_k}) \varepsilon'  \leq \\
	 \varepsilon' + n^k \delta + \varepsilon' \leq 2 \varepsilon' + \left( c_1 \left(\frac{1}{\varepsilon '} \right)^{c_2} \right)^k \delta,
\end{gather*}
which by the choice of $\delta$ is at most $3\varepsilon '$ (in order to bound the third summand by $\varepsilon'$ we use that $\mu$ is a probability measure and the sets in the sum come from a partition of $V_1 \times \ldots \times V_k$). But this contradicts the assumption that $\mu(E) \geq \alpha \geq 4 \varepsilon'$.	
\end{proof}

\begin{rem}
In the special case when $\mu$ is an ultraproduct of counting measures concentrated on finite sets, this gives a density version of the well-known lemma of Erd\H os and Hajnal, see e.g. \cite[Lemma 2.1]{fox2008induced}

\end{rem}

	In particular, the result holds when $E$ is a definable relation in an NIP structure (see Section \ref{sec: hypergraphs definable in NIP}), giving uniform definability of the sets $A_i$ in terms of $E, \alpha, \varepsilon$.

In the case when $E$ is definable in a distal structure we have the following strengthening proved in \cite[Corollary 4.6]{distal}.
\begin{fact}
	Let $\CM$ be a distal structure and
  $\theta(x_1, \ldots, x_{k}, y)$ a formula.  Given $\alpha >0$ there is $\delta >0$ such that:
  for any relation $E(x_1, \ldots, x_k)$ defined by an instance of $\theta$ and any generically stable measures $\mu_i$ on
  $M^{|x_i|}$, if $\mu(R) \geq \alpha$ (where $\mu = \mu_1 \otimes \ldots \otimes \mu_k$), then there
  are definable sets $A_i \subseteq M^{|x_i|}$ with $\mu_i(A_i) \geq \delta$ for all
  $i=1, \ldots, k$ and $\prod_{i=1}^{k} A_i \subseteq R$.  Moreover, each $A_i$  can be defined by an instance of a
  formula $\psi_i(x_i,z_i)$ that depends only on $\theta$ and $\alpha$.

	\end{fact}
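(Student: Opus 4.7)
The plan is to deduce this statement as a direct corollary of the distal regularity lemma (Theorem \ref{thm: distal regularity}), using a pigeonhole argument on the partition to locate a single good tuple of definable pieces whose box is already contained in $E$ and whose marginal measures are bounded below by a constant depending only on $\theta$ and $\alpha$.

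First, I would apply Theorem \ref{thm: distal regularity} to $\theta$ with the choice $\varepsilon := \alpha/2$. This produces an exponent $c = c(\theta)$, partitions $M^{|x_i|} = \bigcup_{j < K} A_{i,j}$ with $K \leq (2/\alpha)^c$, and an exceptional set $\Sigma \subseteq [K]^k$ with $\mu\bigl(\bigcup_{\vec \imath \in \Sigma} A_{1,i_1} \times \dots \times A_{k,i_k}\bigr) \leq \alpha/2$. For every good tuple $\vec \imath \notin \Sigma$, the corresponding box $A_{1,i_1} \times \dots \times A_{k,i_k}$ is either disjoint from $E$ or entirely contained in $E$, and each $A_{i,j}$ is defined by an instance of a formula $\psi_i(x_i, z_i)$ depending only on $\theta$.

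Next comes the counting step. Since $\mu(E) \geq \alpha$ and the exceptional boxes contribute at most $\alpha/2$, the union of good boxes that lie inside $E$ has $\mu$-measure at least $\alpha/2$. There are at most $K^k \leq (2/\alpha)^{ck}$ such good tuples, so by averaging there is some $\vec \imath^\ast \notin \Sigma$ with $A_{1,i_1^\ast} \times \dots \times A_{k,i_k^\ast} \subseteq E$ and
\[
\mu_1(A_{1,i_1^\ast}) \cdots \mu_k(A_{k,i_k^\ast}) \;\geq\; \frac{\alpha/2}{(2/\alpha)^{ck}}.
\]
Since each factor $\mu_i(A_{i,i_i^\ast})$ is at most $1$, each individual factor is bounded below by the same quantity. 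Therefore setting $\delta := \frac{\alpha}{2}\bigl(\frac{\alpha}{2}\bigr)^{ck}$ --- a constant depending only on $\theta$ and $\alpha$ --- and taking $A_i := A_{i,i_i^\ast}$ gives sets with $\mu_i(A_i) \geq \delta$ and $\prod_i A_i \subseteq E$. The uniform definability clause is automatic: the $A_i$ inherit being defined by an instance of $\psi_i(x_i, z_i)$ where $\psi_i$ depends only on $\theta$ (and crucially not on $\varepsilon$, thanks to part (4) of Theorem \ref{thm: distal regularity}).

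There is essentially no obstacle beyond correctly invoking the distal regularity lemma; the only thing to notice is that one must use the stronger form of (4) that makes $\psi_i$ depend only on $\theta$, so that the final formulas do not drift with $\alpha$. (If instead one tried to prove this starting from the NIP regularity lemma of Section \ref{sec:appl-hypergr}, one would not get boxes fully contained in $E$, but only high-density boxes, which is precisely the weaker statement of Theorem \ref{thm: density approx EH} --- so using distality, and thus the $0/1$-density clause in Theorem \ref{thm: distal regularity}, is essential for the stronger conclusion here.)
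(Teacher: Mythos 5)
Your argument is correct: partitioning with $\varepsilon=\alpha/2$ via Theorem \ref{thm: distal regularity}, discarding the exceptional boxes (total measure $\leq\alpha/2$) and the good boxes disjoint from $E$, the remaining good boxes contained in $E$ carry measure $\geq\alpha/2$, and pigeonhole over the at most $K^k\leq(2/\alpha)^{ck}$ tuples yields a single box of measure $\geq(\alpha/2)^{ck+1}$; since each factor is $\leq 1$, every marginal is bounded below by the same quantity, and the definability clause is inherited from part (4) of the theorem. However, this is not how the statement is established in the paper: the paper gives no internal proof at all, citing \cite[Corollary 4.6]{distal}, where the result is obtained directly from the distal cell decomposition (cutting lemma) machinery and is in fact the \emph{engine} used to prove the distal regularity lemma, not a consequence of it. So your derivation runs the logical arrow in the opposite direction. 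Within the present paper this is perfectly legitimate, since Theorem \ref{thm: distal regularity} is available as a black box, and your route is shorter and self-contained; what it cannot do is serve as an independent proof in the original development (there it would be circular), and the constants it produces are those inherited from the regularity lemma rather than the sharper ones coming directly from the cutting lemma. One small bonus of your argument worth noting: you obtain $\psi_i$ depending only on $\theta$, slightly stronger than the stated dependence on $\theta$ and $\alpha$.
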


\subsection{Non-partitioned case}
In the non-partite case, however, it is much harder to find a large homogeneous subset (i.e.~a clique or an anti-clique), as it is well-known in combinatorics, and we give some examples in the definable setting illustrating it.

The following is a classical result of R\"odl.

\begin{fact} \label{fac: Rodl} (\cite{rodl1986universality}, see also \cite[Theorem 1.1]{fox2008induced})
For each $\varepsilon \in (0, \frac{1}{2})$ and finite graph $H$ there is some $\delta = \delta(H, \varepsilon) > 0$ such that every $H$-free graph on $n$ vertices contains an induced subgraph on at least $\delta n$ vertices with edge density either at most $\varepsilon$  or at least $1-\varepsilon$.
\end{fact}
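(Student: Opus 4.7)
The plan is to combine Szemer\'edi's regularity lemma with Ramsey's theorem and an induced embedding lemma, handling the within-part edge densities by an induction on $h := |V(H)|$. The base cases $h \leq 2$ are trivial (for $H = K_2$, $H$-freeness says $G$ has no edges; for $H = \overline{K_2}$, $G$ is complete; so one takes $A = V(G)$).

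For the inductive step, choose $\varepsilon' = \varepsilon'(\varepsilon,h) > 0$ small enough that the induced embedding lemma for Szemer\'edi regularity applies: any $h$ pairwise $\varepsilon'$-regular parts with densities in $[\varepsilon/2,\, 1-\varepsilon/2]$ contain every $h$-vertex graph as an induced subgraph. Apply Szemer\'edi's regularity lemma to $G$ with parameter $\varepsilon'$, obtaining an equipartition $V(G) = V_1 \cup \dotsb \cup V_M$ with $M \leq M_0(\varepsilon')$, such that all but $\varepsilon' \binom{M}{2}$ pairs are $\varepsilon'$-regular. On the reduced graph on $[M]$, three-color each pair: $\mathbf{L}$ for $\varepsilon'$-regular pairs of density $\leq \varepsilon/2$, $\mathbf{H}$ for $\varepsilon'$-regular pairs of density $\geq 1-\varepsilon/2$, and $\mathbf{M}$ otherwise. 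When $M$ exceeds the three-color Ramsey number $R_3(h)$, Ramsey's theorem produces a monochromatic $h$-clique on indices $i_1,\dotsc,i_h$. A monochromatic $\mathbf{M}$-clique would, by the induced embedding lemma, force an induced copy of $H$ in $G$, contradicting $H$-freeness; so the clique is $\mathbf{L}$ or $\mathbf{H}$. By passing to the complement we may assume $\mathbf{L}$. Then $U := V_{i_1} \cup \dotsb \cup V_{i_h}$ has size at least $(h/(2M))\,n$, and every between-part density inside $U$ is at most $\varepsilon/2$.

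What remains is to control the \emph{within}-part densities of the $V_{i_j}$. Fix a vertex $w \in V(H)$ and set $H' := H - w$, a graph on $h-1$ vertices. The $H$-freeness of $G$ implies the following ``local'' constraint: for every $v \in V(G)$, the pair $(N(v),\, V(G) \setminus N(v))$ cannot contain the bipartite structure corresponding to an induced embedding of $H'$ split along $w$'s neighborhood in $H$ (otherwise $v$ would complete an induced $H$). Applying a suitably generalized form of the inductive hypothesis for $H'$ to the induced subgraphs on each $V_{i_j}$ (iterated through neighborhoods of chosen vertices) yields near-homogeneous subsets of linear size inside each part; combining these across the $h$ parts using the low between-part densities produces the desired subset $A$ of size $\geq \delta n$ with overall edge density $< \varepsilon$ or $> 1 - \varepsilon$.

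The main obstacle is the last step. The constraint inherited by neighborhoods in $G$ is not literally $H'$-freeness but a forbidden \emph{bipartite} configuration, so the inductive hypothesis must be phrased in a sufficiently general form (e.g., ``forbidden induced copies of $H$ with one vertex pinned'', or equivalently forbidden bipartite patterns between a set and its complement) and propagated through the recursion. The critical feature keeping the constants under control is that the induction is on $h = |V(H)|$, not on $n$: after $h$ levels of recursion the argument terminates, and the resulting $\delta$ is a positive constant depending only on $H$ and $\varepsilon$.
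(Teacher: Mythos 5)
The paper does not prove this statement; it is quoted as a known Fact with references to R\"odl's original paper and to Fox--Sudakov, so there is no in-paper argument to compare against. Judged on its own, your proposal contains a genuine gap at exactly the point you flag. The first half (regularity lemma, three-colouring the reduced graph, Ramsey, and the induced embedding lemma to rule out a ``medium'' clique) is the standard opening, but the way you then try to handle the within-part densities --- an induction on $|V(H)|$ via neighbourhoods and ``pinned-vertex'' forbidden bipartite patterns --- is not carried out, and it is not clear it can be: the constraint inherited by $N(v)$ versus its complement is not $H'$-freeness of $G[V_{i_j}]$, the reformulated inductive hypothesis is never stated, and the recombination step across the $h$ parts is left entirely open. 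As written, the argument does not produce a $\delta$.

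The fix is much simpler than what you attempt, and it is what makes the classical proof work: do not take $h$ parts, take $t := \max\bigl(h, \lceil 4/\varepsilon\rceil\bigr)$ of them. Any $t$ pairwise $\varepsilon'$-regular parts with all densities in $[\varepsilon/2, 1-\varepsilon/2]$ already contain an induced copy of $H$ (since $t \geq h$), so Ramsey with threshold $R_3(t)$ yields $t$ parts whose pairwise densities are all at most $\varepsilon/2$ (or all at least $1-\varepsilon/2$). On the union $U$ of these $t$ parts, the within-part edges contribute at most roughly $1/t \leq \varepsilon/4$ to the density of $U$ \emph{regardless} of what the parts look like internally, so $d_E(U) \leq \varepsilon/2 + 1/t < \varepsilon$, and $|U| \geq t n / M(\varepsilon')$ gives the required $\delta$. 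No control of within-part structure, and no recursion on $H$, is needed. Two further small repairs: you cannot colour irregular pairs with the same colour $\mathbf{M}$ as medium-density regular pairs, since the embedding lemma fails on irregular pairs; instead first pass (e.g.\ by Tur\'an, using that only $\varepsilon'\binom{M}{2}$ pairs are irregular and $\varepsilon'$ is small relative to $1/R_3(t)$) to $R_3(t)$ parts that are pairwise $\varepsilon'$-regular, and only then three-colour by density. You should also ensure $M \geq R_3(t)$ by invoking the regularity lemma with a prescribed lower bound on the number of parts.
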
  


We consider a generalization of this property to finitely approximable measures.

\begin{defn} \label{def: EH, approx EH, etc}

Let $\CM$ be a structure and let $\mathfrak{M}$ be a class of Keisler measures. Let $\mathcal{E}$ be a collection of definable (symmetric) (hyper-)graphs in (some powers of) $\CM$.
\begin{enumerate}
 \item We will say that $\mathcal{E}$ satisfies the \emph{R\"odl property} with respect to $\mathfrak{M}$ if for every $E \subseteq (M^n)^k$ in $\mathcal{E}$ and every $\varepsilon>0$ there is some $\delta > 0$ such that for every $\mu \in \mathfrak{M}$, a Keisler measure on $M^n$ which is finitely approximable on $E$, there is some definable $A \subseteq M^n$ such that $\mu(A) \geq \delta$ and the $\mu^{(k)}$-density of $E$ on $A$ is either $<\varepsilon$ or $> 1 - \varepsilon$.
 \item If in addition such an $A$ can be defined by an instance of some formula that depends only on $E$, and not on $\varepsilon$, then we say that $\mathcal{E}$ satisfies the \emph{uniform R\"odl property} with respect to $\mathfrak{M}$.
 \item We will say that $\mathcal{E}$ satisfies the \emph{strong R\"odl property} with respect to $\mathfrak{M}$ if in (1) we can find a definable $E$-homogeneous subset of positive $\mu$-measure.\end{enumerate}

\end{defn}

Fact \ref{fac: Rodl} implies that if $\mathcal{E}$ is a family of pseudofinite hypergraphs of bounded VC-dimension, then it satisfies the R\"odl property with respect to the class $\mathfrak{M}$ of pseudofinite counting measures, in the language of set theory. We give some examples showing that there is little hope in generalizing this to arbitrary generically stable measures.

\begin{sample} \label{ex: no EH for gen stab}
The strong R\"odl property does not hold for graphs definable in the field of reals, with respect to the Lebesgue measure. To see this, consider the relation $E \subseteq \mathbb{R}^2 \times \mathbb{R}^2$ defined by $(a,b) E (a'b') \iff |a-a'| < |b-b'|$, and let $\mu$ be the generically stable measure on $\mathbb{R}^2$ given by restricting the Lebesgue measure on $[0,1]^2$ to the definable sets. 
We claim that there is no definable $E$-homogeneous subset of $\mathbb{R}^2$ of positive measure. Indeed, any such set $A \subseteq [0,1]^2$ would have to contain an $E$-homogeneous square, and it is easy to see that this is impossible by the definition of $E$ (one can check, however, that the uniform R\"odl property is satisfied as for any $\varepsilon > 0$ we can choose a sufficiently thin vertical stripe of positive measure such that the $E$-density on it is $\varepsilon$-close to $1$).

%

\end{sample}

It may be tempting to use the NIP regularity lemma as in the partitioned case (Theorem \ref{thm: density approx EH}) to establish the R\"odl property (applying it for a symmetric relation $R \subseteq V_1 \times V_2$ with $V = V_1 = V_2$, $\mu = \mu_1 = \mu_2$). However, it doesn't work. The reason is that, given an $\varepsilon$-regular partition $A_1, \ldots, A_n$ of $V$, it is perfectly possible that all of the pairs on the diagonal $(A_i,A_i), 1 \leq i \leq n$ are bad simultaneously. Namely, if $\Sigma$ is the collection of all bad pairs, we have that $\sum_{(i,j) \in \Sigma} \mu(A_i) \mu(A_j) < \varepsilon$. On the other hand, if let's say $(A_i : 1\leq i \leq n)$ is an equipartition, we have $\sum_{1 \leq i \leq n} \mu(A_i)^2 \leq n \frac{1}{n^2} \leq \frac{1}{n}$, which can be smaller than $\varepsilon$ when $n$ is sufficiently large.
In fact, this observation suggests an idea of a counter-example to the uniform R\"odl property, which we present in the next subsection.

\subsubsection{A counterexample to the uniform R\"odl property} \label{sec: p-adics counterex}

%
%
%
%
%

Throughout this section we are working in the field of $2$-adics $\mathbb{Q}_2$, viewed as a first-order structure 
$$\CM = \left(\mathbb{Q}_2, 0,1, +, \cdot, v(x) \leq v(y), (P_n(x))_{n \in \mathbb{N}_{\geq 2}} \right)$$
 with the universe $M=\mathbb{Q}_2$ in the Macintyre language (so $v(x) \leq v(y)$ is a binary predicate comparing the $2$-adic valuations of $x$ and $y$, and $P_n(x) \iff \exists y (x = y^n)$). Let $\mu$ be the Haar measure on $\mathbb{Q}_2$ normalized on the compact ball $\mathbb{Z}_2$ (restricted to definable sets). Then $\mathbb{Q}_2$ is a distal structure, and $\mu$ is a generically stable measure (e.g.~see the introduction in \cite{distal}).
 We think of elements in $\mathbb{Q}_2$ as branches of a binary tree and define $E \subseteq M^2$ by saying that $E(x,y)$ holds if and only if $v(x-y)$ is odd (i.e.~if the branches $x$ and $y$ split at an odd level). This is a symmetric relation definable in the Macintyre's language. We estimate the $\mu^{(2)}$-density of $E$ on certain definable sets.

\begin{lem} \label{lem: density on a ball}

 Assume that $A$ is a (valuational) ball, then the density $d_E(A)$ is either $\frac{1}{3}$ or $\frac{2}{3}$ (depending on the radius of the ball).

\end{lem}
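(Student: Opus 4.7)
\medskip

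\noindent\textbf{Proof proposal.} My plan is to do a direct Haar measure calculation using Fubini, which is available by Proposition \ref{prop:prod-fim} since $\mu$ is fap on $E$ as a generically stable measure in a distal (hence NIP) structure.

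First, by translation invariance of the Haar measure, any ball in $\mathbb{Q}_2$ of the form $a + 2^n \mathbb{Z}_2$ can be reduced to $A = 2^n \mathbb{Z}_2$ without changing $d_E(A)$, since $E$ depends only on the difference $x-y$. With the normalization $\mu(\mathbb{Z}_2) = 1$, we have $\mu(A) = 2^{-n}$.

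Next, by Fubini I would write
\[ \mu^{(2)}(E \cap A^2) = \int_A \mu\bigl(\{ y \in A : v(x-y) \text{ is odd}\}\bigr)\, d\mu(x). \]
For fixed $x \in A$, the set $A \setminus \{x\}$ (which has full measure in $A$) decomposes into the ``annuli'' $S_k(x) = \{ y \in A : v(x-y) = k\}$ for integers $k \geq n$. Each $S_k(x)$ is a difference of two balls, $(x + 2^k \mathbb{Z}_2) \setminus (x + 2^{k+1}\mathbb{Z}_2)$, and hence has measure $2^{-k} - 2^{-k-1} = 2^{-k-1}$. Thus the integrand equals the geometric sum $\sum_{k \geq n,\ k \text{ odd}} 2^{-k-1}$.

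Now a split on the parity of $n$ finishes the computation. If $n$ is even, the odd $k \geq n$ are $n+1, n+3, \dotsc$, and the sum is $2^{-n-2}(1 + \tfrac14 + \tfrac{1}{16} + \cdots) = 2^{-n-2}\cdot \tfrac{4}{3} = \tfrac{1}{3}\mu(A)$. If $n$ is odd, the odd $k \geq n$ are $n, n+2, \dotsc$, giving $2^{-n-1}\cdot \tfrac{4}{3} = \tfrac{2}{3}\mu(A)$. Integrating the constant-in-$x$ integrand over $A$ and dividing by $\mu(A)^2$ yields $d_E(A) = \tfrac{1}{3}$ or $\tfrac{2}{3}$ according to the parity of the radius exponent. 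There is no real obstacle here beyond bookkeeping the normalization; the only subtle point is justifying Fubini, which is handled by the fap hypothesis via Proposition \ref{prop:prod-fim}.
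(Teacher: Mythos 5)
Your proof is correct and is essentially the computation the paper gives: both decompose the pairs in $A^2$ according to the level $k=v(x-y)$ at which the two branches split, note that each such stratum contributes measure $2^{-k-1}\mu(A)$ per fiber, and sum the resulting geometric series over odd $k$, with the parity of the radius exponent deciding between $\frac13$ and $\frac23$. Your Fubini-with-annuli bookkeeping (after translating to $2^n\mathbb{Z}_2$) is just a slightly cleaner packaging of the paper's tree-partition argument, and the numerical answers agree.
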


\begin{proof}

Let $A = \{ b \in \mathbb{Q}_2 : v(c - b) \geq r \}$ be a (clopen) ball with center $c$ of radius $r$, for some $r \in \mathbb{Z}$.
We think of the elements of $\mathbb{Q}_2$ as bi-infinite binary sequences (with the set of non-zero entries containing the least element). Then we can write 
  $A = \{ \tau_0 \frown \tau: \tau \in 2^{\omega} \}$ for some $\tau_0 \in 2^{\mathbb{Z}_{\leq r}}$, where ``$\frown$'' denotes the concatenation of sequences.
For each $n \in \omega$, consider the partition $A= \bigcup_{\sigma \in 2^n}A_{\sigma}$, where $A_{\sigma} = \{ \tau_0 \frown \sigma \frown \tau : \tau \in 2^{\omega} \}$. By translation-invariance of the Haar measure we have $\mu(A_\sigma) = \frac{1}{2^n} \mu(A)$ for all $\sigma \in 2^n, n \in \omega$.

We would like to calculate the density $d_E(A) = \frac{\mu^{(2)} \left(E\cap A^2 \right)}{\mu(A)^2}$. 
For $n \in \mathbb{N}_{\geq 1}$, we consider the definable set
$$F_n := \left\{(a,a') \in A^2 : \bigvee_{\sigma \in 2^{n-1}} \bigvee_{t \in \{0,1\}} \left( a \in A_{\sigma \frown t} \land  a' \in A_{\sigma \frown (t-1)} \right) \right\}.$$
From the definition we have:
\begin{itemize}
	\item if $(a,a') \in F_n$ then $v(a-a') = r+n-1$;
	\item in particular, if $r+n$ is even then $F_n \subseteq (E \cap A^2)$;
	\item if $r+n$ is odd then $F_n \subseteq  (A^2 \setminus E)$;
	\item $F_n \cap F_{n'} = \emptyset$ for any $n \neq n' \in \mathbb{N}_{\geq 1}$;
	\item $\mu^{(2)}(F_n) = \frac{1}{2^{n}} \mu(A)^2$.
	
	(Indeed, $\mu^{(2)}(F_n) = \sum_{\sigma \in 2^{n-1}} \left( \mu^{(2)}\left((A_{\sigma\frown 0} \times A_{\sigma \frown 1}) \cup (A_{\sigma\frown 1} \times A_{\sigma \frown 0})\right) \right)$, and using that $(A_{\sigma})_{\sigma \in 2^{n}}$ is a partition, $= 2^{n-1} \cdot 2 \cdot \left( \frac{1}{2^n} \mu(A) \right)^2 = \frac{1}{2^{n}} \mu(A)^2$.)
\end{itemize}

\ 

Using these observations we obtain the following estimates.

\begin{enumerate}

\item If $r$ is odd, then $$ \sum_{n \geq 1 \textrm{ odd} } \mu^{(2)}(F_n)\leq \mu^{(2)}(E \cap A) =  d_E(A) \mu(A)^2 \leq \mu(A)^2 - \left(\sum_{n \geq 1 \textrm{ even}} \mu^{(2)}(F_n)\right).$$
We have:

$$ \sum_{n \geq 1 \textrm{ odd} } \mu^{(2)}(F_n) = \sum_{n \geq 1 \textrm{ odd} } \frac{1}{2^{n}} \mu(A)^2 = \mu(A)^2 \sum_{m \geq 0} \frac{1}{2^{2m+1}} = $$ $$ = \mu(A)^2 \sum_{m \geq 0} \frac{1}{2} \frac{1}{4^m} = \mu(A)^2 \frac{1/2}{1-1/4} = \frac{2}{3} \mu(A) ^2, $$

$$ \sum_{n \geq 1 \textrm{ even} } \mu^{(2)}(F_n) = \sum_{n \geq 1 \textrm{ even} } \frac{1}{2^{n}} \mu(A)^2 = \mu(A)^2 \sum_{m \geq 1} \frac{1}{2^{2m}} = $$ $$= \mu(A)^2 \left(\sum_{m \geq 0} \frac{1}{4^m} - 1\right) = \left(\frac{1}{1-1/4} -1 \right) \mu(A)^2 = \frac{1}{3} \mu(A)^2.$$

Combining we get $d_E(A) = \frac{2}{3}$.

\item If $r$ is even, then $$ \sum_{n \geq 1 \textrm{ even} } \mu^{(2)}(F_n) \leq \mu^{(2)}(E \cap A) = d_E(A) \mu(A)^2 \leq \mu(A)^2 - \left(\sum_{n \geq 1 \textrm{ odd}} \mu^{(2)}(F_n) \right),$$ and a similar computation shows that $d_E(A) = \frac{1}{3}$.
\end{enumerate}
\end{proof}

\begin{lem} \label{lem: contains a ball}

Fix a formula $\phi(x,y)$. Then there is some $\gamma \in (0,1)$ such that: for any tuple of parameters $b \in M^{|y|}$, if $\mu(\phi(x,b)) > 0$, then $\phi(x,b)$ contains some ball $B$ with $\mu(B) \geq \gamma \mu(\phi(x,b))$.
 \end{lem}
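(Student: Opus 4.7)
The plan is to invoke Denef's $p$-adic cell decomposition, applied uniformly in parameters in the Macintyre language. Fix $\phi(x,y)$. First, cell decomposition produces constants $N = N(\phi), n = n(\phi) \in \NN$ such that, for every parameter $b$, the set $\phi(x,b) \cap \ZZ_2$ is a disjoint union of at most $N$ definable cells (plus a finite set of measure zero), each of the standard form
\[
C = \{ x \in \ZZ_2 : v(x-c) \in I,\; x - c \in \lambda \cdot P_n \},
\]
where $c, \lambda \in \QQ_2$ and $I \subseteq \ZZ_{\geq 0}$ is an interval, all depending on $b$.

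Next, I would show that every non-empty such cell $C$ contains a ball $B \subseteq C$ with $\mu(B) \geq \gamma' \mu(C)$ for some $\gamma' = \gamma'(n) > 0$ depending only on $\phi$. Since $P_n \cap \ZZ_2^{\times}$ is an open subgroup of finite index in $\ZZ_2^\times$ (in fact containing $1 + 2^m \ZZ_2$ for a suitable $m = m(n)$), the coset $\lambda P_n$ restricted to any annulus $\{x : v(x-c) = k\}$ is either empty (when $k \not\equiv v(\lambda) \pmod n$) or a disjoint union of a fixed number $M = M(n)$ of balls of common Haar measure $2^{-k-m}$. Summing over admissible $k \in I$ yields a geometric series with ratio $2^{-n}$, so the largest ball in $C$ (sitting at the minimal admissible $k^\ast \in I$) has measure at least $\gamma' \mu(C)$ with $\gamma' := (1 - 2^{-n})/M$.

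Finally, I combine these two steps by pigeonholing over the cells: if $\mu(\phi(x,b)) > 0$ then some cell $C_i$ in the decomposition satisfies $\mu(C_i) \geq \mu(\phi(x,b))/N$, and inside it we locate a ball $B$ with $\mu(B) \geq \gamma' \mu(C_i) \geq (\gamma'/N)\mu(\phi(x,b))$. Setting $\gamma := \gamma'/N$, which depends only on $\phi$, completes the proof.

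The main obstacle is the uniform cell-decomposition step: we require that both the number of cells and the exponent $n$ be bounded in terms of $\phi$ alone, independently of the parameter $b$. This is a standard consequence of Denef's theorem in the Macintyre language, but care is needed in citing the correct uniform version. Once that is in place, the remainder of the argument is an elementary Haar-measure computation on explicit disjoint unions of balls.
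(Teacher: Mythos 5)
Your proof is correct and follows the same overall strategy as the paper's: uniform $p$-adic cell decomposition for $\phi$ (with the number of cells $N$ and the exponents $n$ depending only on $\phi$, not on $b$), pigeonhole to select a cell of measure at least $\mu(\phi(x,b))/N$, and then an argument that any cell of positive measure contains a ball carrying a definite fraction of its measure. The two arguments diverge only in that last step. The paper constructs a single witness point $a$ in the cell with $v(a-c(b))$ within $n$ of the minimal admissible valuation $v(f(b))$, and invokes the coset-stability fact (if $v(y-x)>2v(n)+v(y-a)$ then $x-a$ and $y-a$ lie in the same coset of $P_n$) to show that the ball $B_{\geq 2v(n)+v(f(b))+n}(a)$ is contained in the cell; the ratio is then controlled by comparing both this ball and the cell to the circumscribing ball $B_{\geq v(f(b))}(c(b))$. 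You instead decompose the entire cell into balls annulus by annulus, using that $P_n\cap\ZZ_2^{\times}$ is an open subgroup of finite index, and note that the resulting ball measures form a geometric series with ratio $2^{-n}$, so the largest single ball already accounts for a $(1-2^{-n})/M$ fraction of the cell. Both routes produce a constant $\gamma$ depending only on $\phi$; yours gives a somewhat more transparent constant at the cost of using the full coset structure of $P_n$ rather than a single perturbation lemma. The one point requiring care, which you correctly flag, is citing a cell decomposition that is uniform in the parameter $b$; that is precisely the form the paper relies on, so there is no gap.
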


\begin{proof}

If $\mu(\phi(x,b)) > 0$, then $\phi(x,b)$ has to be infinite. As demonstrated in the original paper of Macintyre \cite[Theorem 2]{macintyre1976definable}, every infinite definable subset of $M$ in the $2$-adics has non-empty interior. In particular it must contain some open ball of positive Haar measure.

However, to prove the claim we need a slightly more careful analysis. We recall a couple of facts about the $2$-adic cell decomposition (see e.g. \cite[Section 7]{VCD1}). Let $\phi(x,y)$ be fixed. Then there is some $N \in \mathbb{N}$, definable functions $f_i,g_i$ and elements $\lambda_i \in M$ for $i \leq N$ such that for every $b \in M^{|y|}$, the set $\phi(M,b)$ is a union of at most $N$ \emph{cells} of the form
$$U_i(b) = \{ x\in M : v(f_i(b)) \leq v(x-c_i(b)) < v(g_i(b)) \land P_{n_i}(\lambda_i(x-c_i(b))) \}.$$

Besides, we have the following fact.
\begin{fact} \label{fac: Rychlik} (see e.g. \cite[Lemma 7.4]{VCD1})
Suppose $n > 1$, and let $x,y,a \in M$ be such that $v(y-x)>2v(n)+v(y-a)$. Then $x-a$ and $y-a$ are in the same coset of $P_n$.
\end{fact}

Assume now that $\mu(\phi(x,b)) > 0$. 
Then for at least one $i \leq N$, the corresponding cell $U_i(b) \subseteq \phi(M,b)$ satisfies $\mu(U_i(b)) \geq  \frac{1}{N} \mu(\phi(x,b))$. Let $U(b) := U_i(b), f := f_i, g := g_i, n := n_i, c := c_i, \lambda := \lambda_i$.

We claim that there is some element $a \in U(b)$ with $v(a-c(b)) \leq v(f(b)) + n$. First, as the valuation group of $\mathbb{Q}_2$ is $\mathbb{Z}$, there must exist some $\beta \in \mathbb{Z}$ such that $v(f(b)) \leq \beta \leq v(f(b)) + n$ and $v(\lambda) +   \beta = n \alpha$ for some $\alpha \in \mathbb{Z}$. Let $e \in M$ be arbitrary with $v(e) = \alpha$, and let $a := \frac{e^n}{\lambda} + c(b) \in M$. Then:
\begin{enumerate}
	\item $\lambda(a-c(b)) = e^n$ (in particular $P_n(\lambda(a-c(b)))$ holds),
	\item $v(a-c(b)) = v(\frac{e^n}{\lambda}) = v(e^n) - v(\lambda) = n v(e) - v(\lambda) = n\alpha - v(\lambda) = \beta$.
\end{enumerate}  
Now either $\beta < v(g(b))$, in which case $a \in U(b)$, or $\beta \geq v(g(b))$, in which case any element in $U(b)$ satisfies the claim.

Now we consider the ball $B := B_{\geq m}(a)$ for $m := 2v(n) + v(f(b)) + n + 1$. We claim that $B \subseteq U(b)$. Indeed, for any $x \in B$ we have $v(a-x) > 2 v(n) + v(a - c(b))$, hence
by Fact \ref{fac: Rychlik}, $x-c(b)$ and $a-c(b)$ are in the same coset of $P_n$, and of course $v(x-c(b)) = v(a-c(b))$, so $x \in U(b)$.

Finally, as all balls of a given positive radius have the same Haar measure, we have $\mu(B) \geq \frac{1}{2^{2 v(n) + n + 1}} \mu(B_{\geq v(f(b))}(c(b)))$ and, as $U(b) \subseteq B_{\geq v(f(b))}(c(b))$,  we have $\mu(U(b)) \leq \mu(B_{\geq v(f(b))}(c(b)))$. Hence $$\mu(B) \geq \frac{1}{2^{2 v(n) + n + 1}} \mu(U(b)) \geq \left(\frac{1}{2^{2 v(n) + n + 1}} \cdot \frac{1}{N} \right) \mu(\phi(M,b)).$$

Note that the coefficient only depends on $\phi(x,y)$, and not on the choice of the parameter $b$.
\end{proof}

We show that the uniform R\"odl property fails for $E$. Assume towards contradiction that we can find some $\phi(x,y)$ such that for every $\varepsilon>0$ there is some set $A \subseteq M$ definable by an instance of $\phi(x,y)$ and satisfying $\mu(A) > 0$ and $d_E(A) \in [0, \varepsilon) \cup (1 - \varepsilon,1]$. Let's say $d_E(A) > 1 - \varepsilon$ (if $d_E(A) < \varepsilon$, we work with the complement of $E$ instead). Let $\gamma > 0$ be as given by Lemma \ref{lem: contains a ball} for $\phi(x,y)$, and fix some  $\varepsilon << \gamma$.

Now $A$ contains some ball $B$ with $\mu(B) = \delta \mu(A)$ for some $0 < \gamma \leq \delta \leq 1$, and we estimate the number of edges on $A$ using Lemma \ref{lem: density on a ball}.

$$\mu^{(2)}(E \cap A^2) = \mu^{(2)}(E \cap B^2) + \mu^{(2)}(E \cap (A \setminus B)^2) + 2 \mu^{(2)}(E(A\setminus B,B)) \leq $$
$$\frac{2}{3} \mu(B)^2 + \mu(A \setminus B)^2 + 2 \mu(A \setminus B) \mu(B) = $$

$$ \frac{2}{3} \delta^2 \mu(A)^2 + (1 - \delta)^2 \mu(A)^2 + 2 (1 - \delta) \delta \mu(A)^2 = $$
$$ \left(\frac{2}{3} \delta^2 + 1 - 2\delta + \delta^2 +2 \delta - 2 \delta^2 \right) \mu(A)^2 = $$

$$ \left(1 - \frac{1}{3} \delta^2    \right) \mu(A)^2  \leq \left(1 - \frac{1}{3} \gamma^2 \right) \mu(A)^2 . $$

But as we have assumed $\varepsilon << \gamma \in (0,1)$, this contradicts the assumption that $d_E(A) > 1 - \varepsilon$.

\subsubsection{Uniform R\"odl property fails for semialgebraic hypergraphs}

It is well-known that Fact \ref{fac: Rodl} fails for hypergraphs (see the example at the very end of \cite{rodl1986universality}). We observe that the uniform  R\"odl property fails already in the case of $3$-hypergraphs in the \emph{semialgebraic} setting. 

For this, let $E(x_1,x_2,x_3) \subseteq \mathbb{R}^3$ be the relation given by $(x_1<x_2<x_3) \land (x_1 + x_3 - 2x_2 \geq 0)$, it is definable in  the field of reals (it is considered in \cite[Section 3.1]{conlon2014ramsey}). We claim that it doesn't satisfy the uniform  R\"odl property relatively to the class of measures concentrated on finite sets. If we assume that it holds, then by $o$-minimality for every $\varepsilon > 0$ there is some $\delta > 0$ such that for any finite set $A \subseteq \mathbb{R}$ there is some \emph{interval} $B \subseteq \mathbb{R}$  such that for $C = A \cap B$ we have $d_E(C) > 1 - \varepsilon$ or $d_E(C) < \varepsilon$. We observe that in fact the $E$-density tends to be $\frac{1}{2}$.
Let arbitrary $\varepsilon < \frac{1}{2}$ and $\delta > 0$ be fixed.
Let us take $A = \{1, 2, 3, \ldots, N\}$ for some $N \in \mathbb{N}$ large enough (such that $\delta N$ is also large), and let $C \subseteq A, C= \{p_1, \ldots, p_n\}$ be an arbitrary interval of integers in $A$, $p_1 < \ldots < p_n$, $|C| \geq \delta N$.

Assume that $E(p_i, p_j, p_k)$ doesn't hold for some $p_1 < i < j < k < p_n$. Let us define $q_i := p_n - p_{n-i+1} + p_1$. Then we have $p_1 < q_i < q_j < q_k < p_n$, and $q_i, q_j, q_k$ are all in $C$ since $C$ is an interval. Moreover it's easy to see that $E(q_i, q_j, q_k)$ holds. This establishes a bijection between edges and non-edges in $C$, showing that the density on $C$ is arbitrary close to $1/2$ for $N$ large enough.

\bibliographystyle{acm}
\bibliography{refs}
\end{document}